\newtheorem{thm}{Theorem}[section]
\newtheorem{lem}[thm]{Lemma}
\newtheorem{cor}[thm]{Corollary}
\newtheorem{prop}[thm]{Proposition}
\theoremstyle{definition}
\newtheorem{de}[thm]{Definition}
\newtheorem{ex}[thm]{Example}
\theoremstyle{remark}
\numberwithin{equation}{section}
\newcommand{\Rmnum}[1]{\expandafter\@slowromancap\romannumeral #1@}
\newcommand{\ba}{\mathbf{a}}
\newcommand{\be}{\mathbf{e}}
\newcommand{\bp}{\mathbf{p}}
\newcommand{\bq}{\mathbf{q}}
\newcommand{\bs}{\mathbf{\overline{a}}}
\newcommand{\bt}{{\mathbf{a}i-\overline{\mathbf{a}}\ell}}
\newcommand{\bv}{\mathbf{v}}
\newcommand{\bw}{\mathbf{w}}
\newcommand{\bx}{\mathbf{x}}
\newcommand{\by}{\mathbf{y}}
\newcommand{\bK}{\mathbf{K}}
\newcommand{\bZ}{\mathbf{Z}}
\newcommand{\bS}{\mathbf{S}}
\newcommand{\bB}{\mathbf{B}}
\newcommand{\F}{\mathbb{F}}
\newcommand{\N}{\mathbb{N}}
\newcommand{\G}{\Gamma}
\newcommand{\supp}{\mathrm{supp\,}}
\begin{document}

\title[Equidistribution in a local field of positive characteristic]{Equidistribution with an error rate and Diophantine approximation over a local field of positive characteristic }

\author{Sanghoon Kwon}

\address{Center for Mathematical Challenges, Korea Institute For Advanced Study, Seoul 02455, Korea}
\email{skwon@kias.ac.kr}

\author{Seonhee Lim}
\address{Department of Mathematical Sciences, Seoul National University, Seoul 08826, Korea}

\email{slim@snu.ac.kr}
\thanks{}

\thanks{}


\subjclass[2000]{Primary   28A33; Secondary 37A15, 22E40.}

\date{}


\keywords{field of formal series, effective equidistribution, ergodic theorem}

\maketitle
\begin{abstract}
For a local field $\bK$ of formal Laurent series and its ring $\bZ$ of polynomials, we prove a pointwise equidistribution with an error rate of each $H$-orbit in $SL(d,\bK)/SL(d,\bZ)$ for a certain proper subgroup $H$ of horospherical group, extending a work of Kleinbock-Shi-Weiss. 

We obtain an asymptotic formula for the number of integral solutions to the Diophantine inequalities with weights, generalizing a result of Dodson-Kristensen-Levesley.
This result enables us to show pointwise equidistribution for unbounded functions of class $C_\alpha$. 
\end{abstract}

\section{Introduction}
Let $\bK$ be the field $\F_q(\!( t^{-1})\!)$ of formal Laurent series in $t^{-1}$ over a finite field $\F_q$ of order $q$ and let $\bZ$ be the ring $\F_q[t]$ of polynomials in $t$ over $\F_q$. The absolute value $|\cdot|$ on $\bK$ is given by $|f|=q^{\textrm{deg}(f)}$. Let $\mathcal{O}$ be the ring of formal power series $\mathbb{F}_q[\![t^{-1}]\!]$ and $\lambda$ be the Haar measure on $\mathbf{K}$ normalized by $\lambda(\mathcal{O})=1$. 
For $m,n \in \mathbb{N}$ and $d = m+n$, let $G = SL(d,\bK)$ and $\G = SL(d,\bZ)$.  
Let us denote by $\mathfrak{a}^+$ the set of $d$-tuples $\ba=(a_1,\cdots,a_{d})\in\mathbb{N}^{d}$ such that
$$\sum_{i=1}^m a_i=\sum_{j=1}^n a_{m+j}.$$
Given $\ba =(a_1,\cdots,a_{d})\in \mathfrak{a}^+$, let us define
$$g_\ba=\textrm{diag}(t^{a_1},\cdots,t^{a_m},t^{-a_{m+1}},\cdots,t^{-a_{m+n}}).$$ 
In this paper, we study the action of the subsemigroup generated by $g_{\mathbf{a}}$ of $G$ on the space $X= G/\G$. Since $g_\ba$-action on $X$ is ergodic 
with respect to the Haar probability measure $\mu$ on $X$, for $\mu$-almost every $x \in X$, we have $$\lim_{N\to\infty}\frac{1}{N}\sum_{n=0}^{N-1}\varphi(g_\ba^n x)=\int_{X}\varphi d\mu.$$ 

We are interested in the following question: given a proper subgroup $L$ of $G$, does the above convergence of Birkhoff average of $g_\ba$-translates still hold for almost every points in an $L$-orbit? This question was studied in \cite{ksw16} and \cite{shi15} for a real Lie group $G$ and a lattice subgroup $\Gamma$ of $G$.

When $L$ is the horospherical subgroup 
$$H^+=
\{u\in G|g_\ba^{-n}ug_\ba^n\to e\textrm{ as }n\to\infty\}$$ of $G$, then the answer is affirmative (cf. \cite{mo11}). In this article, we consider a subgroup $H$ of $H^+$
\begin{equation}\label{uA} 
H=\left\{u_A=\left(
\begin{array}{cc}
I_m & A \\
0 & I_n
\end{array}\right)\colon\,A\in \textrm{Mat}_{m\times n}(\bK)\right\},
\end{equation}
which has applications in metric Diophantine approximation (Propsition~\ref{prop:1.2} and Theorem~\ref{cwd} below). Since $H \simeq \textrm{Mat}_{m\times n}(\bK)$, the Haar measure on $H$ is isomorphic to $\lambda^{mn}$, which we will denote by $\lambda_H$.
Let us also denote the Haar measure on $G$ such that $\lambda_G(K)=1$ for a maximal compact subgroup $K$ by $\lambda_G$.

We first show that for compactly supported smooth functions, pointwise ergodic theorem for $g_\ba$-action holds for almost every points in each $H$-orbit. A function $f\colon G \to \mathbb{R}$ is \emph{smooth} if there is a compact open subgroup $U \subset G$ such that $f$ is $U$-invariant, i.e. it is locally constant. Let us denote by $C^\infty_c(X)$ the set of functions on $X=G/\G$ whose lifts on $G$ are $\Gamma$-invariant smooth functions on $G$.
\begin{thm}\label{pe}For any given $x\in X$, $\varphi\in C^\infty_c(X)$ and $\epsilon>0,$ as $N \to \infty$, we have
$$\frac{1}{N}\sum_{n=0}^{N-1}\varphi(g_\ba^nhx)=\int_{X}\varphi d\mu+O(N^{-\frac{1}{2}}(\log N)^{\frac{3}{2}+\epsilon}),$$
for $\lambda_H$-almost every $h\in H$.
\end{thm}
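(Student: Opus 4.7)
\medskip

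The plan is to adapt the Kleinbock–Shi–Weiss strategy to the positive characteristic setting: prove a variance (second-moment) bound for the Birkhoff sums over $H$, apply Chebyshev's inequality plus Borel–Cantelli along a sparse subsequence $N_k = 2^k$, and interpolate to arbitrary $N$ via a Rademacher–Menshov maximal inequality. Since $H$ has infinite Haar measure, I first localize to a fixed compact ball $B \subset H$ (exhausting $H$ by countably many such balls gives the statement for $\lambda_H$-a.e.\ $h \in H$), and set $\psi := \varphi - \bar\varphi$ where $\bar\varphi = \int_X \varphi\,d\mu$.

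The central estimate is the variance bound
\[
\int_B \left| \sum_{n=0}^{N-1} \psi(g_\ba^n h x) \right|^2 d\lambda_H(h) \ll_{x,B,\varphi} N.
\]
To prove this I would expand the square and bound the cross-correlations $I(n,m) = \int_B \psi(g_\ba^n h x)\psi(g_\ba^m h x)\,d\lambda_H(h)$ for $n > m$. The substitution $h = g_\ba^{-m} h' g_\ba^m$, with Jacobian $q^{-mds}$ (conjugation by $g_\ba$ scales the entry $A_{ij}$ of $u_A \in H$ by $q^{a_i+a_{m+j}}$, totalling $q^{ds}$ with $s = \sum_{i=1}^m a_i$), rewrites
\[
I(n,m) = q^{-mds} \int_{g_\ba^m B g_\ba^{-m}} \psi(g_\ba^{n-m} h' \cdot g_\ba^m x)\,\psi(h' \cdot g_\ba^m x)\,dh'.
\]
Effective equidistribution of $g_\ba^{n-m}$-translates of this (large) $H$-orbit piece, obtained from effective mixing of $g_\ba$ on $X = G/\G$ via the standard thickening trick in the directions of $H^+ \ominus H$ and in the stable/central directions, yields $|I(n,m)| \ll q^{-\gamma(n-m)}\lambda_H(B)$ times a norm of $\varphi$, summing to the claimed $O(N)$ bound.

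Chebyshev with threshold $N_k^{-1/2}(\log N_k)^{1/2+\epsilon/2}$ gives a probability bound of order $(\log N_k)^{-1-\epsilon}$, summable along $\{N_k\}$, so Borel–Cantelli yields the rate along the subsequence. For the interpolation, a Rademacher–Menshov type maximal estimate
\[
\int_B \max_{N_k \le N < N_{k+1}} \Big| \sum_{n=N_k}^{N-1}\psi(g_\ba^n h x) \Big|^2 d\lambda_H(h) \ll N_k (\log N_k)^2
\]
(a dyadic decomposition powered by the same variance bound) followed by a second Chebyshev–Borel–Cantelli step gives the final rate $N^{-1/2}(\log N)^{3/2+\epsilon}$, since the dominant contribution comes from the maximal term $N_k^{-1/2}(\log N_k)^{3/2+\epsilon/2}$.

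The chief obstacle is the effective equidistribution input feeding the variance bound. Because $H$ may be a proper subgroup of the full horospherical $H^+$, one cannot apply standard horospherical equidistribution directly: the orbit piece must be thickened in the complementary directions of $H^+$ and in the stable/central directions before invoking mixing, and the thickening error must then be controlled quantitatively in terms of the sup-norm of $\varphi$ and the level of the compact open subgroup under which it is locally constant. Carrying out this analysis in positive characteristic, with a rate explicit enough and uniform enough in $x$ to drive the variance bound, is where the genuine work of the proof resides; the subsequent Chebyshev–Borel–Cantelli and Rademacher–Menshov steps are then essentially formal.
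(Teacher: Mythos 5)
Your overall architecture is the same as the paper's: a variance (second-moment) bound for $\sum_{n<N}\psi(g_\ba^n h x)$ over a compact ball $B\subset H$, obtained from a decay estimate for the cross-correlations $I(n,m)$, followed by Chebyshev, Borel--Cantelli along $N_k=2^k$, and a Rademacher--Menshov/dyadic maximal argument. The paper implements exactly this: Proposition~\ref{Double} gives the cross-correlation decay, Lemma~\ref{lemma} converts it into the variance bound, and the dyadic intervals $L_s$ in Corollary~\ref{corp} carry out the Gál--Koksma/Rademacher--Menshov interpolation. So the probabilistic scaffolding in your proposal is correct and is what the paper does.

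However, there are two related gaps in the analytic core, precisely where you yourself flag that ``the genuine work of the proof resides.'' First, your renormalization $h = g_\ba^{-m}h'g_\ba^m$ (whose Jacobian $q^{-mds}$ you compute correctly) rewrites $I(n,m)$ as an integral over the \emph{expanding} set $g_\ba^m B g_\ba^{-m}$ with base point $g_\ba^m x$. You then want to cite effective equidistribution of $g_\ba^{n-m}$-translates of this piece to get $|I(n,m)|\ll q^{-\gamma(n-m)}$. But the effective equidistribution constant (the paper's $C_1(f,\phi,L)$ in Theorem~\ref{Effective}) degrades with both the Sobolev norm of the test function on $H$ --- which blows up as the ball grows with $m$ --- and the compact set $L$ containing the base point --- and $g_\ba^m x$ can leave every compact subset of $X$. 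So the pure $q^{-\gamma(n-m)}$ bound is not directly achievable by this substitution; you would need uniformity in $m$ that the available input does not give. The paper avoids this by not renormalizing: it proves double equidistribution directly, accepting the weaker but sufficient decay $q^{-\gamma\min(n,m-n)}$, which plugs into Lemma~\ref{lemma} just as well.

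Second, and more fundamentally, your proposal never supplies the \emph{quantitative non-divergence} input (the paper's Section 2, Corollary~\ref{Nondivergence}, a $(C,\alpha)$-good function argument à la Kleinbock--Margulis/Kleinbock--Tomanov). This is what controls the $\lambda_H$-measure of those $h$ for which the intermediate point $g_\ba^i h x$ escapes a compact set $X_\epsilon$, and it is used twice: inside the proof of Theorem~\ref{Effective} (the set $V_1^\epsilon$) and again inside Proposition~\ref{Double} (the set $E$). Without it, ``uniform enough in $x$'' cannot be achieved, because $X$ is noncompact and the decay constants in effective mixing/equidistribution genuinely depend on a compact set containing the base point. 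You identify the issue but do not name the tool that resolves it; adding the $(C,\alpha)$-good / non-divergence machinery, and replacing the renormalization step by a direct double-equidistribution estimate with $\min(n,m-n)$ decay, would close the gap and recover the paper's proof.
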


To show a similar result for functions with non-compact support, we make use of lattice point counting results. For $\bp\in \bZ^n$, $\bq\in \bZ^m$ and $A\in\textrm{Mat}_{m\times n}(\bK)$, consider the inequality
\begin{equation}\label{multi}\|A\bq -\bp\|_\infty<\phi(\|\bq\|_\infty),\quad1 \leq \| \bq \|_\infty \leq q^T \end{equation}
 where $\| \cdot \|_\infty$ is the supremum norm.
It was shown in \cite{dkl05} that the number of vectors $(\bp,\bq)\in\bZ^n \times \bZ^m$ satisfying the inequality (\ref{multi}) is
$$\Phi(T)+O\left(\Phi(T)^{\frac{1}{2}}\log^{\frac{3}{2}+\epsilon}(\Phi(T))\right)$$ for $\lambda^{mn}$-almost every $A\in \textrm{Mat}_{m\times n}(\bK)$, where $\Phi(T)=\underset{ \bq \in \bZ, \;  \|\bq\|_\infty\le q^T}{\sum}\phi(\|\bq\|_\infty)^n$. 

 Generalizing this result, we prove the following weighted version.
Consider the weighted quasi-norms given by
$$\|\bx\|_\alpha=\max_{1\le i\le m}|x_i|^{1/a_i}\qquad\textrm{and}\qquad\|\by\|_\beta=\max_{1\le j\le n}|y_j|^{1/a_{j+m}}$$ for $\bx\in\bK^m$ and $\by\in\bK^n$. Denote by ${N}_R(T,A)$ the number of nonzero vectors $(\bp,\bq) \in \bZ^m \times \bZ^n$ satisfying the following Diophantine inequalities
\begin{equation}\label{daw0}\|A\bq-\bp\|_\alpha<\frac{q^R}{\|\bq\|_\beta},\qquad 1\le\|\bq\|_\beta\le q^T.\end{equation}

\begin{prop}\label{prop:1.2}   There is a function $\displaystyle\Psi_R\colon \mathbb{N}\to\mathbb{R}$ such that $$N_R(T,A)=\Psi_R(T)+O\left(\Psi_R(T)^{\frac{1}{2}}\log^{2+\epsilon}(\Psi_R(T))\right)$$ for $\lambda^{mn}$-almost every $A\in \textrm{Mat}_{m\times n}(\bK)$. 
\end{prop}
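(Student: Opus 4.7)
The plan is to translate the counting of lattice points satisfying (\ref{daw0}) into a Birkhoff sum along the $g_\ba$-orbit of $u_A\G$, and then apply Theorem~\ref{pe} to the Siegel transform of $\chi_E$ after truncating away the cusp. For any nonzero $(\bp,\bq)\in\bZ^m\times\bZ^n$, the $i$-th component of $g_\ba^n u_A(-\bp,\bq)^t$ is $t^{na_i}(A\bq-\bp)_i$ for $1\le i\le m$ and $t^{-na_{m+j}}q_j$ for the last $n$ indices, so the first $m$ coordinates have $\alpha$-norm $q^n\|A\bq-\bp\|_\alpha$ and the last $n$ coordinates have $\beta$-norm $q^{-n}\|\bq\|_\beta$. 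Defining the bounded region
\[ E=\{(\by,\bz)\in\bK^m\times\bK^n:\|\by\|_\alpha\|\bz\|_\beta<q^R,\;q^{-1}<\|\bz\|_\beta\le 1\}, \]
integer solutions $(\bp,\bq)$ to (\ref{daw0}) with $q^{n-1}<\|\bq\|_\beta\le q^n$ correspond bijectively to nonzero vectors of $g_\ba^n u_A\bZ^d$ lying in $E$, and hence
\[ N_R(T,A)=\sum_{n=0}^T\widehat{\chi_E}(g_\ba^n u_A\G),\qquad\widehat{\chi_E}(g\G):=\sum_{0\neq v\in g\bZ^d}\chi_E(v). \]
A function-field Siegel integration formula gives $\int_X\widehat{\chi_E}\,d\mu=c\,\lambda_{\bK^d}(E)$, and slicing $E$ by $\beta$-spheres while using $\sum_{i=1}^m a_i=\sum_{j=1}^n a_{m+j}=:a$ together with the volume formulas for $\alpha$- and $\beta$-balls gives $\lambda_{\bK^d}(E)\asymp q^{Ra}$; one then sets $\Psi_R(T):=(T+1)c\,\lambda_{\bK^d}(E)\asymp Tq^{Ra}$.

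Because $\widehat{\chi_E}$ blows up at the cusp of $X$ and does not lie in $C_c^\infty(X)$, Theorem~\ref{pe} cannot be applied to it directly. Introduce a cutoff $M>0$ and decompose $\widehat{\chi_E}=F_M+R_M$ with $F_M=\min(\widehat{\chi_E},M)$. The truncation $F_M$ is bounded and locally constant, and can be sandwiched in $L^1(X)$ to within $O(\lambda_{\bK^d}(E)/M)$ between two functions of $C_c^\infty(X)$; applying Theorem~\ref{pe} to these sandwich functions yields, for $\lambda_H$-a.e.\ $A$,
\[ \sum_{n=0}^T F_M(g_\ba^n u_A\G)=(T+1)\!\int_X\!F_M\,d\mu+O\bigl(MT^{1/2}(\log T)^{3/2+\epsilon}\bigr). \]
For the tail $R_M$, a Rogers-type second-moment bound $\int_X\widehat{\chi_E}^2\,d\mu\ll\lambda_{\bK^d}(E)^2+\lambda_{\bK^d}(E)$, together with Chebyshev and a Borel-Cantelli argument along the orbit, controls $\sum_{n=0}^T R_M(g_\ba^n u_A\G)\ll T\lambda_{\bK^d}(E)/M$ plus a subsidiary logarithmic error. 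Optimizing $M\asymp\Psi_R(T)^{1/2}(\log\Psi_R(T))^{-1/2-\epsilon}$ balances the two error sources and gives the announced rate $O(\Psi_R(T)^{1/2}\log^{2+\epsilon}\Psi_R(T))$.

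The principal obstacle is establishing the Rogers-type second-moment formula on $SL(d,\bK)/SL(d,\bZ)$ with an explicit enough dependence on the shape of $E$ to survive the truncation-optimization step, since the aspect ratio of $E$ grows with $R$. A secondary concern is that $C_c^\infty(X)$ in the ultrametric setting consists of compactly supported locally constant functions, so the sandwich approximation combines a coarse cusp truncation with a slight shrinkage or enlargement of $\chi_E$; each such step must be tracked to produce precisely the $\log^{2+\epsilon}$ exponent rather than the $\log^{3/2+\epsilon}$ coming directly from Theorem~\ref{pe}.
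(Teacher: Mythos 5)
Your approach is genuinely different from the paper's and, as written, does not close. The paper proves Proposition~\ref{prop:1.2} (as Theorem~\ref{counting}) by a direct Diophantine argument in the spirit of \cite{dkl05}: one sets $B_{\bq}=\{A\colon \inf_{\bp}\|A\bq-\bp\|_\alpha< q^R/\|\bq\|_\beta\}$, computes $\lambda^{mn}(B_\bq)$, uses the quasi-independence $\lambda^{mn}(B_\bq\cap B_{\bq'})=\lambda^{mn}(B_\bq)\lambda^{mn}(B_{\bq'})$ for linearly independent $\bq,\bq'$, and feeds the resulting variance bound into Sprindzuk's Lemma~10 (\cite{sp79}); the extra $\log$ (giving $\log^{2+\epsilon}$ rather than $\log^{3/2+\epsilon}$) comes from the divisor sum in $S(T)$. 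No ergodic theorem, no Siegel transform, no second-moment (Rogers) formula enters. In fact the logical flow in the paper goes the other way: the counting statement feeds into the $C_\alpha$-equidistribution of Theorem~\ref{ewu}, not vice versa.

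Your route (Siegel transform of $\chi_E$, truncate at level $M$, apply Theorem~\ref{pe} to the truncation, control the tail by a Rogers-type second moment) is a reasonable template in the real setting, but here it has three concrete gaps. First, the optimization does not produce the claimed rate: with your own estimate $\sum_{n\le T}F_M(g_\ba^n u_A\G)=(T+1)\int F_M+O(MT^{1/2}(\log T)^{3/2+\epsilon})$ and the choice $M\asymp \Psi_R(T)^{1/2}(\log\Psi_R(T))^{-1/2-\epsilon}$, the first error term is $\asymp T\log T$, an order of magnitude worse than the target $T^{1/2}\log^{2+\epsilon}T$; the two sources do not balance to the announced rate. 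Second, the dependence of Theorem~\ref{pe}'s error on the test function is not a constant factor $M$: the full-measure set $Z_\epsilon$ and the threshold $s_y$ in Corollary~\ref{corp} both depend on $\phi$ through the constant $C$ of Proposition~\ref{Double}, which in turn depends on Sobolev norms of $\phi$. With $M=M(T)\to\infty$ one would need a quantitative control of $s_y$ (uniformly in the family $F_M$), which Theorem~\ref{pe} does not supply and which your sketch does not supply either. Third, you rely on a Rogers-type second moment formula over $SL(d,\bK)/SL(d,\bZ)$ with explicit dependence on the shape of $E$; you flag this yourself, and it is indeed not something you can cite from the paper. None of these are cosmetic: without all three the argument does not reach the error exponent in the proposition.

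By contrast, the paper's Sprindzuk-style argument sidesteps both the Siegel/Rogers machinery and the uniformity issue, because the variance estimate is computed directly on the sets $B_\bq$ in $\textrm{Mat}_{m\times n}(\mathcal O)$. If you want to keep your dynamical viewpoint, the place it actually appears in the paper is in Section~5 (Theorem~\ref{ewu} and Lemma~\ref{ball}), where the counting result proved in Section~4 is used as an input, together with the genericity machinery of \cite{ksw16}.
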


For the precise formula of $\Psi_R$, see Theorem~\ref{counting}.

Using Proposition~\ref{prop:1.2}, we obtain an ergodic theorem for certain unbounded functions as appeared in \cite{emm98} and \cite{ksw16}. Every $\mathbf{Z}$-submodule $\Delta$ of rank $r$ in $\mathbf{K}^d$ has a $\mathbf{Z}$-basis. For $L=\mathbf{K}\Delta$, we denote by $\textrm{vol}(L/\Delta)$ the covolume of $\Delta$ in $L$, normalized by $\textrm{vol}(\mathbf{K}^r/\mathbf{Z}^r)=1$ for any $r\in\mathbb{N}$.

In particular, we call a $\bZ$-submodule $\Lambda$ of rank $d$ a $\mathbf{Z}$-\emph{lattice} and a $\mathbf{Z}$-lattice is \emph{unimodular} if its covolume in $\mathbf{K}^d$ is 1. Since $SL(d,\bK)$ acts transitively on the space of unimodular lattices in $\bK^{d}$ and the stabilizer of $\bZ^{d}$ is $SL(d,\bZ)$, we can identify the space with $X=SL(d, \bK) / SL(d, \bZ)$.
For a unimodular lattice $\Lambda\subset \bK^d$ and a $\bZ$-submodule $\Delta\le\Lambda$, let us define $$\alpha(\Lambda):=\max\{\textrm{vol}(\mathbf{K}\Delta/\Delta)^{-1}\colon \Delta\le\Lambda\}.$$ 
Let $C_\alpha(X)$ be the space of functions $\varphi$ on $X$ such that
\begin{enumerate}
\item[(1)] The function $\varphi\colon X\to\mathbb{R}$ is continuous except on a $\mu$-null set, and
\item[(2)] There exists $C>0$ such that for all $\Lambda\in X$, we have $|\varphi(\Lambda)|\le C\alpha(\Lambda)$.
\end{enumerate}

\begin{thm}\label{ewu} Let $x_0$ be the point $\Gamma$ in $X$. For all $\varphi\in C_\alpha(X)$ and $\lambda_H$-almost every $h\in H$, we have $$\lim_{N\to\infty}\frac{1}{N}\sum_{n=0}^{N-1} \varphi(g_\ba^n hx_0)=\int_{X}\varphi d\mu.$$
\end{thm}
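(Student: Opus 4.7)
The strategy, following Eskin--Margulis--Mozes and Kleinbock--Shi--Weiss, is to truncate $\varphi$ at a level $M$ of $\alpha$, handle the compactly supported piece with Theorem~\ref{pe}, and control the $\alpha$-tail using Proposition~\ref{prop:1.2}. Fix $M>0$ and a locally constant cutoff $\chi_M\colon X\to[0,1]$ equal to $1$ on $\{\alpha\le M\}$ and vanishing on $\{\alpha>2M\}$; set $\varphi_M:=\varphi\,\chi_M$. By the positive-characteristic Mahler compactness criterion, $\{\alpha\le 2M\}$ is compact in $X$, so $\varphi_M$ is bounded and compactly supported. Since $\varphi$ is continuous off a $\mu$-null set, we may sandwich $\varphi_M$ between $\psi^\pm_{M,\epsilon}\in C^\infty_c(X)$ with $\int_X(\psi^+_{M,\epsilon}-\psi^-_{M,\epsilon})\,d\mu<\epsilon$, using that compact open subgroups form a neighborhood basis at the identity of $G$. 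Applying Theorem~\ref{pe} to each $\psi^\pm_{M,\epsilon}$ and intersecting the resulting full-measure subsets of $H$ over a countable dense set of parameters $(M,\epsilon)$ produces $H_0\subset H$ with $\lambda_H(H\smallsetminus H_0)=0$ such that
$$\lim_{N\to\infty}\frac{1}{N}\sum_{n=0}^{N-1}\varphi_M(g_\ba^n h x_0)=\int_X\varphi_M\,d\mu$$
for every $h\in H_0$ and every rational $M$.

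The tail is then handled via $|\varphi-\varphi_M|\le C\alpha\,\mathbf{1}_{\{\alpha>M\}}$, which reduces the problem to proving, for $\lambda_H$-a.e.\ $h$,
$$\limsup_{N\to\infty}\frac{1}{N}\sum_{n=0}^{N-1}\alpha(g_\ba^n h x_0)\,\mathbf{1}_{\{\alpha>M\}}(g_\ba^n h x_0)\longrightarrow 0\quad\text{as }M\to\infty.$$
Using the layer-cake identity $\alpha\,\mathbf{1}_{\{\alpha>M\}}=M\,\mathbf{1}_{\{\alpha>M\}}+\int_M^\infty \mathbf{1}_{\{\alpha>s\}}\,ds$, this reduces to controlling $\#\{0\le n<N:\alpha(g_\ba^n h x_0)>s\}$ for each $s\ge M$. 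For $h=u_A$, the condition that a rank-one sublattice of $g_\ba^n u_A\bZ^d$ witnesses $\alpha>s$ unpacks to: there is a nonzero $(\bp,\bq)\in\bZ^m\times\bZ^n$ whose image $(t^{na_i}(p_i+(A\bq)_i),\,t^{-na_{m+j}}q_j)$ is short, which is exactly a weighted Diophantine inequality of the form \eqref{daw0} with $T=T(n)$ and $R=R(s,n)$ determined by the weights $\ba$. Proposition~\ref{prop:1.2} then gives $N_R(T,A)=\Psi_R(T)+O(\cdots)$ for $\lambda^{mn}$-a.e.\ $A$, and a Siegel-type integration formula over $\bK$ identifies the double sum of main terms $\Psi_R(T)$ over $n<N$ and $s\ge M$ with $N\cdot\int_X\alpha\,\mathbf{1}_{\{\alpha>M\}}\,d\mu$ up to lower-order corrections. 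Higher-rank sublattices in the definition of $\alpha$ are treated by the same procedure applied to the induced action of $g_\ba^n$ on $\wedge^r\bZ^d$, contributing tail bounds of identical shape. Since $\int_X\alpha\,d\mu<\infty$ (a Schmidt-type integrability estimate valid for $d\ge 2$), the $M$-tail vanishes in the limit.

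Combining the two steps, for every $h\in H_0$ and every rational $M$,
$$\left|\limsup_{N\to\infty}\frac{1}{N}\sum_{n=0}^{N-1}\varphi(g_\ba^n h x_0)-\int_X\varphi\,d\mu\right|\le 2C\!\int_{\{\alpha>M\}}\!\alpha\,d\mu+\left|\int_X\varphi_M\,d\mu-\int_X\varphi\,d\mu\right|,$$
and both right-hand terms vanish as $M\to\infty$. Replacing $\varphi$ by $-\varphi$ gives the analogous bound from below and concludes the proof. The main obstacle is the tail estimate: the correspondence between $\alpha(g_\ba^n u_A\bZ^d)>s$ via a rank-one sublattice and the Diophantine system \eqref{daw0} is direct, but matching the main term of the resulting counting asymptotic with the Siegel integral of $\alpha\,\mathbf{1}_{\{\alpha>M\}}$, and handling the higher-rank sublattice contributions via parallel counting results on $\wedge^r\bZ^d$, require care that is not encapsulated in the statement of Proposition~\ref{prop:1.2} and must be carried out by the same method as its proof.
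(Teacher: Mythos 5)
Your truncation of $\varphi$ into a compactly supported piece $\varphi_M$ and an $\alpha$-tail, and your use of Theorem~\ref{pe} to handle $\varphi_M$, match the paper's strategy. The problem is the tail. You propose to control $\frac{1}{N}\sum_n \alpha(g_\ba^n u_A\Gamma)\,\mathbf{1}_{\{\alpha>M\}}$ by a layer-cake argument that unwinds $\alpha(g_\ba^n u_A\bZ^d)>s$ into Diophantine conditions, invoking Proposition~\ref{prop:1.2} for rank-one sublattices and postulating ``parallel counting results on $\wedge^r\bZ^d$'' for the higher-rank contributions. That last step is where the argument breaks: no such counting result is proved or available in the paper (Proposition~\ref{prop:1.2} is specifically about vector approximation, i.e.\ rank-one primitive sublattices, not about the action of $g_\ba^n u_A$ on decomposable multivectors), and you correctly flag that it ``must be carried out by the same method as its proof'' --- but the combinatorics of independent pairs $\Delta,\Delta'$ of rank $r>1$, which drive the second-moment bound in the proof of Theorem~\ref{counting}, is genuinely harder. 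As written this is a gap, not an omitted routine step.

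The paper avoids exterior powers entirely by a different key lemma. Lemma~\ref{comparison} gives the \emph{pointwise} comparison $c_1\alpha(\Lambda)\le\widehat{\chi}_{B_{q^r}}(\Lambda)\le c_2\alpha(\Lambda)$ for all unimodular $\Lambda$, proved by putting $\Lambda$ in the diagonal form $t^{a_1}\bZ e_1+\cdots+t^{a_d}\bZ e_d$ after left multiplication by $SL(d,\mathcal O)$ --- a step that exploits the ultrametric/Smith-normal-form structure over $\bK$ and has no direct real analogue. This converts the tail of $\alpha$ into the tail of a lattice-point count $\widehat{\chi}_{B_{q^r}}$, which is then dominated by $\widehat{\chi}_{E_{T,R}}+\widehat{\chi}_{F_{T,R}}$ (Lemma~\ref{dominate}), whose Birkhoff averages are already controlled by the weighted counting theorem (Proposition~\ref{E and F}). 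In particular, only rank-one counting is ever needed, and the passage from compact-support genericity plus ball-count convergence to $C_\alpha$-genericity is packaged in the Kleinbock--Shi--Weiss criterion (their Corollary 5.4), cited in Lemma~\ref{ball}. If you want to salvage your route you would need a weighted Khintchine--Groshev theorem on every $\wedge^r\bK^d$; the cleaner path, and the one the paper takes, is to prove Lemma~\ref{comparison} and sidestep exterior powers altogether.
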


Finally, using Theorem~\ref{ewu}, we obtain a more general counting result concerning the direction of vectors, analogous to \cite{apt16} and \cite{ksw16}. Let $\bS^{m-1}:=\{\bx\in\bK^m\colon \|\bx\|_\infty=1\}$ and $\pi_\alpha\colon \bK^m\to\bS^{m-1}$ be the map given by $\pi_\alpha(\bx)=\{(t^{a_1}x_1,\cdots,t^{a_m}x_m)\}\cap\bS^{m-1}$. Similarly, let $\pi_\beta\colon\bK^n\to\bS^{n-1}$ be given by $\pi_\beta(\bx)=\{(t^{a_{m+1}}x_1,\cdots,t^{a_{m+n}}x_n)\}\cap\bS^{n-1}$. Given measurable subsets $C_1\subset \bS^{m-1}$ and $C_2\subset\bS^{n-1}$, let us define the set $E_{T,R}(C_1,C_2)$ by
$$
\left\{(\bx,\by)\in\bK^m\times\bK^n\colon \|\bx\|_\alpha<\frac{q^R}{\|\by\|_\beta},1\le\|\by\|_\beta\le q^T,\pi_\alpha(\bx)\in C_1, \pi_\beta(\by)\in C_2\right\}.$$
Denote by $N_R(T,A)(C_1,C_2)$ be the number of nonzero solutions of
(\ref{daw0}) satisfying $\pi_\alpha(A\bq-\bp)\in C_1$ and $\pi_\beta(\bq)\in C_2$. The Haar measure on $\bK^m\times\bK^n$ is $\lambda^{m+n}$. Let us denote the measures on $\bS^{m-1}, \bS^{n-1}$ induced from $\bK^m, \bK^n$ by $\lambda_{\bS^{m-1}}, \lambda_{\bS^{n-1}}$, respectively.

\begin{thm}[Counting with directions]\label{cwd} Let $C_1$ and $C_2$ are measurable subsets of $\bS^{m-1}$ and $\bS^{n-1}$, respectively, with measure zero boundaries with respect to $\lambda_{\bS^{m-1}}, \lambda_{\bS^{m-1}}$, respectively.
As $T\to\infty$,
$$N_R(T,A)(C_1,C_2)\sim \lambda^{m+n}(E_{T,R}(C_1,C_2))$$ holds for $\lambda^{mn}$-almost every $A\in\textrm{Mat}_{m\times n}(\bK)$. \end{thm}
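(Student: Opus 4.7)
The plan is to realize $N_R(T,A)(C_1,C_2)$ as a Birkhoff sum along the $g_\ba$-orbit of $u_A\G$ and then invoke Theorem~\ref{ewu}. The key observation is that the $g_\ba$-action on $\bK^m\times\bK^n$ satisfies $\|g_\ba x\|_\alpha=q\|x\|_\alpha$ and $\|g_\ba y\|_\beta=q^{-1}\|y\|_\beta$, so the product $\|x\|_\alpha\|y\|_\beta$ and the direction maps $\pi_\alpha,\pi_\beta$ are all $g_\ba$-invariant. I therefore introduce the fundamental annulus
$$B_0:=\{(x,y)\in\bK^m\times\bK^n : \|x\|_\alpha\|y\|_\beta<q^R,\;1\leq\|y\|_\beta<q,\;\pi_\alpha(x)\in C_1,\;\pi_\beta(y)\in C_2\}$$
and the Siegel transform $\psi(\Lambda):=\#((\Lambda\setminus\{0\})\cap B_0)$. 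Since $g_\ba$ is unimodular and each $g_\ba^{-n}B_0$ is the slice $\{q^n\leq\|y\|_\beta<q^{n+1}\}\cap E_{\infty,R}(C_1,C_2)$, the family $\{g_\ba^{-n}B_0\}_{n=0}^{T-1}$ tiles $E_{T,R}(C_1,C_2)$ up to the single boundary slice $\{\|y\|_\beta=q^T\}$, whose lattice-point and volume contributions are both bounded independently of $T$. Consequently
$$N_R(T,A)(C_1,C_2)=\sum_{n=0}^{T-1}\psi(g_\ba^n u_A\bZ^d)+O(1),\qquad \lambda^{m+n}(E_{T,R}(C_1,C_2))=T\lambda^{m+n}(B_0)+O(1).$$

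To apply Theorem~\ref{ewu} I must check that $\psi\in C_\alpha(X)$. The pointwise growth bound $\psi(\Lambda)\leq C\alpha(\Lambda)$ is a standard non-archimedean Minkowski-type estimate for the number of lattice vectors in the bounded region $B_0$. For the $\mu$-a.e.\ continuity clause, note that $\|\cdot\|_\alpha$, $\|\cdot\|_\beta$, and hence the slab $\{1\leq\|y\|_\beta<q\}$ and the hypersurface $\{\|x\|_\alpha\|y\|_\beta<q^R\}$, are all locally constant on $(\bK^m\setminus\{0\})\times(\bK^n\setminus\{0\})$; therefore the topological boundary of $B_0$ is contained in $\pi_\alpha^{-1}(\partial C_1)\cup\pi_\beta^{-1}(\partial C_2)$ together with the coordinate hyperplanes, a Lebesgue-null subset of $\bK^d$ by the hypothesis on $C_1,C_2$. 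A first-moment (Siegel mean-value) argument then shows that the set of $\Lambda\in X$ having a nonzero vector in this null set is itself $\mu$-null, yielding the required continuity.

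Applying Theorem~\ref{ewu} with $\varphi=\psi$ gives, for $\lambda_H$-almost every $u_A\in H$,
$$\frac{1}{T}\sum_{n=0}^{T-1}\psi(g_\ba^n u_A\bZ^d)\longrightarrow\int_X\psi\,d\mu=\lambda^{m+n}(B_0),$$
the last equality being the Siegel mean-value formula on $X$ in the present positive-characteristic setting. Combining this with the two identities above yields
$$N_R(T,A)(C_1,C_2)\sim T\lambda^{m+n}(B_0)\sim \lambda^{m+n}(E_{T,R}(C_1,C_2)),$$
which is the asserted asymptotic.

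The principal difficulty is verifying that $\psi\in C_\alpha(X)$, specifically the $\mu$-a.e.\ continuity. While the locally constant nature of the non-archimedean quasi-norms makes the boundary analysis cleaner than in the archimedean situation (the curved pieces $\{\|x\|_\alpha\|y\|_\beta=q^R\}$ and the flat pieces $\{\|y\|_\beta\in\{1,q\}\}$ pose no obstruction at all since they correspond to no genuine topological boundary), one must still argue carefully that the exceptional set of lattices meeting $\pi_\alpha^{-1}(\partial C_1)\cup\pi_\beta^{-1}(\partial C_2)$ is $\mu$-negligible; this is where the measure-zero boundary hypothesis on $C_1,C_2$ is indispensable and must be combined with the Siegel first-moment identity to pass from a Lebesgue-null subset of $\bK^d$ to a $\mu$-null subset of $X$.
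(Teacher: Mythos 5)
Your proof follows essentially the same strategy as the paper: realize the count as a Birkhoff sum of a Siegel transform along the $g_\ba$-orbit of $u_A\Gamma$ and apply the $C_\alpha(X)$-genericity result (Theorem~\ref{ewu}) together with Siegel's mean-value formula. You are more careful than the paper in two places, namely choosing a fundamental annulus $B_0$ with strict inequality so that the slices $g_\ba^{-n}B_0$ tile disjointly (rather than the paper's overlapping $g_\ba^{-n}E_{r,R}(C_1,C_2)$ normalized by $1/r$), and verifying explicitly that the Siegel transform lies in $C_\alpha(X)$ via the Siegel first-moment argument; these are genuine clarifications of steps the paper leaves tacit. One inaccuracy worth fixing: the lattice-point contribution of the boundary slab $\{\|y\|_\beta=q^T\}$ is \emph{not} bounded independently of $T$, since it equals $\widehat{\chi}_{B'}(g_\ba^{T}u_A\Gamma)$ for a fixed bounded clopen region $B'$, which by Lemma~\ref{comparison} is comparable to $\alpha(g_\ba^{T}u_A\Gamma)$ and can be unbounded as the orbit approaches the cusp. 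Fortunately, the Cesaro convergence $\frac{1}{T}\sum_{n=0}^{T-1}\widehat{\chi}_{B'}(g_\ba^n u_A\Gamma)\to\lambda^{m+n}(B')$ that you already obtain from Theorem~\ref{ewu} applied to $\widehat{\chi}_{B'}\in C_\alpha(X)$ forces $\widehat{\chi}_{B'}(g_\ba^{T}u_A\Gamma)=o(T)$, which is all the final asymptotic requires, so the conclusion stands once that $O(1)$ is weakened to $o(T)$.
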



The structure of this paper is as follows: Section 2 is devoted to prove the uniform quantitative non-divergence of $H$-orbits. In section 3, we prove an effective double equidistribution and use it to obtain the pointwise equidistribution for compactly supported continuous functions. We give an asymptotic formula with an error rate for the number of solutions to the Diophantine approximation with weights in section 4. Using this result, we show in section 5 the pointwise equidistribution for unbounded functions contained in $C_\alpha(X)$.

\vspace{.1 in}
\textit{Acknowledgement.} We would like to thank Ronggang Shi for helpful discussions and the anonymous referee for valuable comments. We would also like to thank KIAS where part of this work was done. The work is supported by Samsung Science and Technology Foundation under Project No. SSTF-BA1601-03.


\section{Quantitative non-divergence}
Throughout the article, we will mostly use the supremum norm $$\|\bx\|_\infty=\underset{1\le i\le n}{\max}|x_i|$$ for $\bx=(x_1,\cdots,x_d)\in\bK^d$. 

For each $\bZ$-submodule $\Lambda \subset \bK^d$ of rank $r$, we can choose a basis $\bv_1,\bv_2,\cdots,\bv_r$ and define the norm $\|\cdot\|$ by
$$\|\Lambda\|=\|\bv_1\wedge \bv_2\wedge\cdots\wedge\bv_r\|_\infty.$$
This norm is independent of the choice of the basis.
A $\bZ$-submodule $\Delta$ of $\bZ^d$  is called \emph{primitive} if $\Delta=\bK\Delta\cap\bZ^d$.

Also, let $\delta\colon X\to\mathbb{R}^+$ be the length of shortest vector, i.e. $$\delta(\Lambda)=\underset{\bv\in\Lambda\backslash\{0\}}{\inf}\|\bv\|_\infty.$$ 
If $\delta(g\Gamma)<\epsilon$, then there exists a non-zero vector $\bw\in \bZ^d$ such that $\|g\bw\|_\infty<\epsilon$.


\begin{de}[Good functions] Let $\nu$ be a locally finite Borel measure on metric space $X$. For given $C>0, \alpha>0$, a function $f\colon V\subset X\to \bK$ (or $\mathbb{R}$)
 is \emph{$(C,\alpha)$-good} on $V$ with respect to $\nu$ if for any open ball $\bB\subset V$ and for any $\epsilon>0$, we have
$$\nu(\{\bv\in \bB\,|\, |f(\bv)|<\epsilon\}|\le C\left(\frac{\epsilon}{\sup_\bB |f|}\right)^\alpha\nu(\bB).$$
\end{de}

\begin{lem}[\cite{kt07}, Lemma 2.4]\label{Polynomials} For any $r,s\in\N$, there exists a constant $C=C(r,s)$ such that every polynomial $f\in\bK[x_1,\cdots,x_r]$ of degree $\deg(f) \leq s$ is $(C,\frac{1}{rs})$-good on $\bK^r$ with respect to the Haar measure $\lambda^r$ on $\bK^r$.
\end{lem}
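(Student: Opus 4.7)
The plan is to follow the scheme of Kleinbock--Margulis, adapted to the non-archimedean local field $\bK$, proceeding by induction on the number of variables $r$.

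\emph{Base case $r=1$.} For $f \in \bK[y]$ of degree $d \le s$ and a ball $B = B(y_0,\rho) \subset \bK$, I need to bound
\[
\lambda\bigl(\{y \in B : |f(y)| < \epsilon\}\bigr) \le C(s)\left(\frac{\epsilon}{\sup_B |f|}\right)^{1/s} \lambda(B).
\]
After a translation and rescaling reducing to $B = \cO$ and $\sup_{\cO}|f| = 1$, I factor $f$ over an algebraic closure of $\bK$ as $f(y) = b_d \prod_{i=1}^d (y - \alpha_i)$, using that the absolute value extends uniquely. If $|f(y)| < \epsilon$ then some root $\alpha_j$ satisfies $|y - \alpha_j| < (\epsilon/|b_d|)^{1/d}$, so the set in question is contained in a union of at most $d$ balls of that radius (intersected with $\cO$). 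I then relate $|b_d|$ to $\sup_\cO |f| = 1$ through the ultrametric bounds $|b_{d-k}| \le |b_d|\max_i|\alpha_i|^k$ arising from the elementary symmetric functions: roots outside $\cO$ contribute a fixed multiplicative factor, while roots inside control the effective degree producing the exponent $1/s$, yielding a constant $C(s)$ depending only on $s$.

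\emph{Inductive step.} I write $f(x_1, \ldots, x_r) = \sum_{k=0}^s f_k(x_1,\ldots,x_{r-1})\,x_r^k$, so each $f_k$ is a polynomial in $r-1$ variables of total degree $\le s$, hence $(C_{r-1}, 1/((r-1)s))$-good by the inductive hypothesis. The key tool is the non-archimedean analogue of the Kleinbock--Margulis slicing lemma: if $f$ on $V \times W$ satisfies (i) $f(\cdot,w)$ is $(C_1,\alpha_1)$-good for every $w \in W$ and (ii) $w \mapsto \sup_{v \in V} |f(v,w)|$ is $(C_2,\alpha_2)$-good, then $f$ is good on $V \times W$ with exponent $\alpha_1\alpha_2/(\alpha_1+\alpha_2)$. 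Taking $V$ to be a ball in the $x_r$-direction and $W$ a ball in the $(x_1,\ldots,x_{r-1})$-coordinates: condition (i) holds with $\alpha_1 = 1/s$ by the base case, while (ii) holds with $\alpha_2 = 1/((r-1)s)$ because the sup along $x_r$-lines equals $\max_k |f_k(x_1,\ldots,x_{r-1})|\rho^k$, the maximum of finitely many $(C_{r-1}, 1/((r-1)s))$-good functions, which is itself good with the same exponent. The combined exponent computes to
\[
\frac{\alpha_1 \alpha_2}{\alpha_1 + \alpha_2} = \frac{1/((r-1)s^2)}{r/((r-1)s)} = \frac{1}{rs},
\]
as required.

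\emph{Main obstacle.} The technical heart is the slicing lemma with its precise exponent, which rests on a Fubini-type distribution-function argument. Fortunately, the ultrametric structure simplifies matters, since balls in $\bK^r$ factor as products of balls in $\bK$ and the Gauss-norm identity $\sup_{|y|\le\rho}\bigl|\sum b_i y^i\bigr| = \max_i|b_i|\rho^i$ makes the "sup along lines" computation exact. The base case itself is the most delicate point in positive characteristic: derivative-based arguments of Markov--Remez type are unavailable, so I must rely instead on factorization over $\bar{\bK}$, the unique extension of $|\cdot|$, and tight ultrametric control of the elementary symmetric functions of the roots.
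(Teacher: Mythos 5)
The paper offers no proof of this lemma: it is cited verbatim from Kleinbock--Tomanov (Lemma~2.4 of \cite{kt07}), so there is no in-paper argument to compare against. Your proposal reconstructs the standard Kleinbock--Margulis/Kleinbock--Tomanov scheme, and the skeleton is right: factorization over $\bar\bK$ for the one-variable base case, an induction on the number of variables via a slicing lemma, and the harmonic-mean exponent
\[
\frac{\alpha_1\alpha_2}{\alpha_1+\alpha_2}
  = \frac{(1/s)\cdot(1/((r-1)s))}{1/s + 1/((r-1)s)}
  = \frac{1}{rs},
\]
which is the correct combination rule (your slicing lemma does hold, with constant $C_1+C_2$, by the usual split-and-optimize-over-$\delta$ argument).

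There is, however, a genuine error in the inductive step. You assert that the Gauss-norm identity
\[
\sup_{|y|\le\rho}\Bigl|\sum_i b_i y^i\Bigr| = \max_i |b_i|\rho^i
\]
``makes the sup along lines computation exact.'' This identity is \emph{false} over $\bK=\F_q(\!(t^{-1})\!)$, precisely because the residue field $\F_q$ is finite. Take $g(y)=y^q-y$ and $\rho=1$: for every $y\in\mathcal O$ one has $y^q\equiv y \pmod{t^{-1}\mathcal O}$, so $\sup_{\mathcal O}|g|\le q^{-1}$, whereas $\max_i|b_i|\rho^i=1$. Only the inequality $\le$ holds in general; equality is a statement about the sup over $\bar\bK$, not over $\bK$. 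Consequently the claim that $w\mapsto\sup_{v}|f(v,w)|$ is literally a maximum of $(C_{r-1},\tfrac{1}{(r-1)s})$-good functions is not correct as written. The gap is repairable: one does have a two-sided comparison $c(s)\max_i|\tilde f_i(w)|\rho^i\le \sup_{v}|f(v,w)|\le \max_i|\tilde f_i(w)|\rho^i$ with $c(s)>0$ depending only on $s$ (and on $q$), obtainable by the same Lagrange-interpolation device you sketch in the base case; one then invokes the stability of the $(C,\alpha)$-good property under a two-sided constant comparison (as in Proposition~2.3 of \cite{kt07}) to transfer goodness from the max to the sup, at the cost of a bounded constant. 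But this is exactly the ``most delicate point in positive characteristic'' you flag in the base case, so it is not, in fact, circumvented by ultrametric structure in the inductive step either; the same quantitative lower bound on the sup is needed in both places. As a separate, smaller remark, the relation between $|b_d|$ and $\sup_{\mathcal O}|f|$ in the base case (your ``roots inside control the effective degree'') is stated but not carried out; that is where the constant $C(s)$ actually gets produced, and it deserves to be written down, e.g.\ by separating the roots in $\mathcal O_{\bar\bK}$ from those outside and using interpolation to bound $\sup_{\mathcal O}\prod_{|\alpha_i|\le1}|y-\alpha_i|$ from below.
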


Let $X_\epsilon=\{\Lambda\in X\,|\,\delta(\Lambda)\ge\epsilon\}$. By Mahler's compactness criterion, it follows that $X_\epsilon$ is compact. 
\begin{thm}[\cite{gh07}, Theorem 4.5]\label{Ghosh} Let $d,l\in\N, C,\alpha>0$ and $0<\rho<1$ be given. Let $\bB\subset \bK^l$ be an open ball and $h\colon \bB\to GL(d,\bK)$ a continuous map. For any nonzero primitive submodule $\Delta$ of $\bZ^d$, let $\psi_\Delta(A)=\|h(A)\Delta\|$. Assume that
\begin{enumerate}\setlength\itemsep{-\parsep}
\item[(1)] The function $\psi_\Delta\colon \bB\to\mathbb{R}$ is $(C,\alpha)$-good on $\bB$ {for every $\Delta$}.
\item[(2)] $\sup_{\bB} \psi_\Delta\ge\rho$ {for every $\Delta$}.
\item[(3)] For each $A\in \bB$, there are only finitely many $\Delta$ for which $\psi_\Delta(A)<\rho$.
\end{enumerate}
Then for any $0<\epsilon<\rho$, we have
$$\lambda^l(\{A\in \bB\,|\,\delta(h(A)\bZ^d)<\epsilon\})\le dC\left(\frac{\epsilon}{\rho}\right)^\alpha\lambda^l(\bB),$$
where $\lambda^l$ is the Haar measure on $\bK^l$.
\end{thm}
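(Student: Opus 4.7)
The statement is the Kleinbock--Margulis quantitative nondivergence theorem, adapted to the non-archimedean local field $\mathbf{K}$, and my plan is to follow their classical blueprint: cover the bad set $E_\epsilon := \{A \in \mathbf{B} : \delta(h(A)\mathbf{Z}^d) < \epsilon\}$ by finitely many sub-level sets of $\psi_\Delta$ for a well-chosen collection of primitive $\Delta$, and then apply the $(C,\alpha)$-good hypothesis once per rank stratum.

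The first step is to assign, to each $A \in E_\epsilon$, a ``critical'' primitive submodule $\Delta^*(A)$. For such $A$ there is a nonzero $\mathbf{v} \in \mathbf{Z}^d$ with $\|h(A)\mathbf{v}\|_\infty < \epsilon$, so the rank-$1$ primitive submodule $\mathbf{K}\mathbf{v} \cap \mathbf{Z}^d$ lies in the family $\mathcal{F}(A) := \{\Delta \text{ primitive nonzero}: \psi_\Delta(A) < \rho\}$, which is finite and nonempty by hypothesis (3). I pick $\Delta^*(A) \in \mathcal{F}(A)$ of maximal rank $r(A) \in \{1,\ldots,d\}$.

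The technical core, and the step I expect to be the main obstacle, is the ``upgrade'' assertion that in fact $\psi_{\Delta^*(A)}(A) < \epsilon$ (not merely $<\rho$). The argument is by induction on rank and exploits the ultrametric wedge-product inequality $\|h(A)(\mathbf{u} \wedge \mathbf{w})\|_\infty \leq \|h(A)\mathbf{u}\|_\infty \cdot \|h(A)\mathbf{w}\|_\infty$ in the exterior algebra $\bigwedge^{\bullet} \mathbf{K}^d$. Two cases arise for the short vector $\mathbf{v}$: if $\mathbf{v} \notin \Delta^*(A)$, then wedging $\mathbf{v}$ with a basis representative of $\Delta^*(A)$ produces a rank-$(r(A){+}1)$ primitive submodule with $\psi$-value bounded by $\epsilon \cdot \rho < \rho$, contradicting the maximality of $r(A)$; if $\mathbf{v} \in \Delta^*(A)$, then $\mathbf{v}$ can be extended to a $\mathbf{Z}$-basis of $\Delta^*(A)$ and factored out of the wedge, yielding the desired bound inductively via a rank-$(r(A){-}1)$ sub-submodule. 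The non-archimedean nature of $\mathbf{K}$ is essential here because the wedge-norm inequality has no cross terms, which keeps every estimate sharp without a dimensional constant.

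With the key claim in hand, I stratify $E_\epsilon = \bigsqcup_{r=1}^d E_\epsilon^{(r)}$ by $r = r(A)$. On each stratum, for each primitive $\Delta$ of rank $r$ that actually arises as some $\Delta^*(A)$, the $(C,\alpha)$-good hypothesis (1) together with the sup-bound $\sup_\mathbf{B} \psi_\Delta \geq \rho$ from (2) gives
\begin{equation*}
\lambda^l\bigl(\{A \in \mathbf{B} : \psi_\Delta(A) < \epsilon\}\bigr) \leq C\bigl(\epsilon/\rho\bigr)^\alpha \lambda^l(\mathbf{B}).
\end{equation*}
A Besicovich-type disjointness observation (for $A$ in a given stratum the maximally-ranked $\Delta^*(A)$ is essentially unique, so the cover is single-valued up to measure-zero overlaps) then bounds each stratum $E_\epsilon^{(r)}$ by $C(\epsilon/\rho)^\alpha \lambda^l(\mathbf{B})$. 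Summing over the $d$ possible ranks $r = 1,\ldots,d$ yields the factor $d$ in the final estimate $dC(\epsilon/\rho)^\alpha \lambda^l(\mathbf{B})$.
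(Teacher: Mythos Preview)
The paper does not prove this statement; it is quoted verbatim from \cite{gh07}, so there is no in-paper argument to compare against. Evaluating your proposal on its own terms, there are two genuine gaps.

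First, the ``upgrade'' assertion $\psi_{\Delta^*(A)}(A)<\epsilon$ is false in general, and your Case~2 induction does not establish it. Factoring the short vector $\mathbf{v}$ out of a $\mathbf{Z}$-basis of $\Delta^*(A)$ yields only $\psi_{\Delta^*(A)}(A)\le\|h(A)\mathbf{v}\|_\infty\cdot\psi_{\Delta'}(A)$ for a complementary rank-$(r(A){-}1)$ submodule $\Delta'$, and nothing bounds $\psi_{\Delta'}(A)$: there is no reason for $\Delta'$ to lie in $\mathcal{F}(A)$, nor even to satisfy $\psi_{\Delta'}(A)\le 1$. Concretely, take $d=3$, $\rho=q^{-2}$, $\epsilon=q^{-4}$, and a point $A$ at which $h(A)=\mathrm{diag}(t^{-5},t^{3},t^{2})$. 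One checks directly that $\mathcal{F}(A)=\{\mathbf{Z}e_1,\ \mathbf{Z}e_1\oplus\mathbf{Z}e_3\}$; the maximal-rank member $\Delta^*=\mathbf{Z}e_1\oplus\mathbf{Z}e_3$ has $\psi_{\Delta^*}(A)=q^{-3}\not<\epsilon$, and the complementary $\Delta'=\mathbf{Z}e_3$ has $\psi_{\Delta'}(A)=q^{2}$, so the factoring bound is useless.

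Second, even granting a well-defined assignment $A\mapsto\Delta^*(A)$ on each stratum $E_\epsilon^{(r)}$, your ``Besicovich-type disjointness observation'' does not yield the stated bound. A single-valued map partitions $E_\epsilon^{(r)}$ into fibres $\{A:\Delta^*(A)=\Delta\}$, but bounding each fibre by $C(\epsilon/\rho)^\alpha\lambda^l(\mathbf{B})$ and summing over the a~priori infinite collection of rank-$r$ primitive $\Delta$ that occur gives nothing; and the sublevel sets $\{\psi_\Delta<\epsilon\}$ for distinct $\Delta$ of the same rank certainly overlap. The actual Kleinbock--Margulis/Ghosh mechanism is substantially different: one covers $E_\epsilon$ by sub-balls of $\mathbf{B}$ on each of which a specific $\psi_\Delta$ remains below $\rho$, exploits the ultrametric fact that balls are nested or disjoint to pass to a disjoint subfamily, and then uses a poset (flag) argument on primitive submodules---driven by the submodularity inequality for covolumes---to control the total multiplicity by $d$. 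Your one-line remark is standing in for this entire structure.
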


Let us first check the condition (2) of Theorem~\ref{Ghosh} for $h(A) = g_\ba u_A g$, where $u_A$ is as in (\ref{uA}). 
Remark that the lower bound in condition (2) is expressed in terms of the minimum of $a_i$'s. Denote $\min \ba:=\underset{1 \leq i \leq d}{\min} \{a_i\}.$
\begin{lem}\label{Uniformity} There exists $\beta>0$ with the following property. Let $\bB$ be an open $\| \cdot \|_\infty$- ball centered at $e$ in $M$. Then for each $r=1,\cdots,d-1$, $\bv\in V \overset{\textrm{def}}{=} \bigwedge^r\bK^{d}$ and $\ba\in\mathfrak{a}^+$, one has
$$\sup_{A\in\bB}|g_\ba u_A\bv|\ge bq^{\beta\min \ba}|\bv|$$ for some constant $b$ depending on the ball $\bB$ and independent of $\bv$.
\end{lem}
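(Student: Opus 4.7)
My plan is to bound $\sup_{A \in \bB} |g_\ba u_A \bv|$ from below by isolating a single high-weight coordinate of $u_A \bv$ in the standard basis of $V = \bigwedge^r \bK^d$ and analyzing it as a polynomial in the entries of $A$. Write $\bv = \sum_I v_I e_I$, where $I = I_1 \sqcup I_2$ ranges over $r$-subsets of $[1,d]$ with $I_1 \subset [1,m]$ and $I_2 \subset [m+1,d]$; then $e_I$ is a $g_\ba$-weight vector of weight $\lambda_I = \sum_{i \in I_1} a_i - \sum_{j \in I_2} a_j$.

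Pick $I_0$ with $|v_{I_0}| = |\bv|$ and choose a target subset $I^*$ matched to $I_0$ as follows. When $r \le m$, take any $r$-subset $I^* \subset [1,m]$ with $I^* \supset I_0^{(1)}$; then $\lambda_{I^*} = \sum_{i \in I^*} a_i \ge r \min\ba$. When $r > m$, take $I^* = [1,m] \sqcup J^*$ with $J^* \subset I_0^{(2)}$ of size $r-m$; using the relation $\sum_{i=1}^m a_i = \sum_{j=1}^n a_{m+j}$ imposed by $\ba \in \mathfrak{a}^+$ gives $\lambda_{I^*} = \sum_{j \in [m+1,d] \setminus J^*} a_j \ge (d-r)\min\ba$. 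Since $1 \le r \le d-1$, in both regimes $\lambda_{I^*} \ge \min\ba$, so we will conclude with $\beta = 1$.

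Next I compute the coefficient $P_\bv(A)$ of $e_{I^*}$ in $u_A \bv$ as a polynomial in the entries of $A$. Expanding
$$u_A e_I = e_{I_1} \wedge \bigwedge_{j \in I_2}\Bigl(e_{m+j} + \sum_{k=1}^m A_{k,\, j-m}\, e_k\Bigr),$$
the contribution of $v_I e_I$ to the $e_{I^*}$-coefficient vanishes unless $I_1 \subset I^*_1$ and $I^*_2 \subset I_2$, in which case it equals $v_I$ times $\pm\det(A_{k, j-m})_{k \in I^*_1 \setminus I_1,\, j \in I_2 \setminus I^*_2}$. The key combinatorial observation is that the pair $(I^*_1 \setminus I_1,\, I_2 \setminus I^*_2)$ of row and column sets determines both $I_1$ and $I_2$ given $I^*$; hence distinct admissible $I$'s contribute monomials with pairwise disjoint variable supports, ruling out any cancellation. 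In particular, a monomial coming from the determinant for $I = I_0$ has coefficient $\pm v_{I_0}$ of absolute value $|\bv|$ in $P_\bv(A)$.

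It then remains to apply the standard non-archimedean Gauss-type identity $\sup_{A \in \bB(0,r)}|P(A)| = \max_\alpha |c_\alpha|\, r^{|\alpha|}$ for $P = \sum c_\alpha A^\alpha \in \bK[A_1,\ldots,A_{mn}]$. Since $\deg P_\bv \le d-1$, this yields $\sup_\bB |P_\bv(A)| \ge c_\bB\, |\bv|$ for a constant $c_\bB > 0$ depending only on the radius of $\bB$, and combining with the $q^{\lambda_{I^*}}$ gain from the diagonal action gives
$$\sup_{A \in \bB} |g_\ba u_A \bv| \ge q^{\lambda_{I^*}} \sup_{A \in \bB} |P_\bv(A)| \ge c_\bB\, q^{\min\ba}\, |\bv|,$$
proving the claim with $\beta = 1$ and $b = c_\bB$. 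The main hurdle is the combinatorial no-cancellation argument for the coefficients of $P_\bv$; a secondary point is the uniform treatment of the regimes $r \le m$ and $r > m$, handled by the two parallel choices of $I^*$ described above.
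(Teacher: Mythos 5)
Your proof is correct in strategy and takes a genuinely different route from the paper. The paper follows Shah's finite-difference argument: it isolates the $H$-fixed subspace $W \subset V$ (the span of the highest $g_\ba$-weight lines), shows the iterated-difference map $T(\bv) = [p_W(\tau(u_F^I)\bv)]_{I}$ is injective, and recovers a lower bound on $\sup_{A\in\bB}\|p_W(\sigma(u_A)\bv)\|$ by writing a sampling map $S = Q\circ T$ with $Q$ a Vandermonde-type invertible matrix. You instead expand $u_A\bv$ in the standard basis of $\bigwedge^r\bK^d$, target one highest-weight coordinate $e_{I^*}$ adapted to a maximal-modulus coefficient $v_{I_0}$ of $\bv$, exhibit its $e_{I^*}$-coefficient $P_\bv(A)$ as a multilinear polynomial in the entries of $A$, and show the monomials arising from distinct $I$'s do not collide. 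Your route is shorter and more explicit (it gives $\beta=1$ directly), at the cost of some index bookkeeping; the paper's route avoids this combinatorics by bundling all the weight components of the projection $p_W$ at once.

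Two small repairs are needed. First, the supports contributed by distinct admissible $I$'s are pairwise \emph{distinct} but not pairwise \emph{disjoint}: with $m=n=2$, $r=2$, $I^*=\{1,2\}$, the set $I=\{2,3\}$ contributes $A_{1,1}$ while $I=\{3,4\}$ contributes $A_{1,1}A_{2,2}-A_{1,2}A_{2,1}$, whose supports overlap. What your argument actually proves, and what you need, is that the support of any monomial determines its $I$ (its row indices are $I^*_1\setminus I_1$ and its column indices are $\{j-m\colon j\in I_2\setminus I^*_2\}$), so the monomials coming from different $I$'s are all distinct and cannot cancel. Second, the Gauss-norm identity $\sup_{B(0,\rho)}|P|=\max_\alpha|c_\alpha|\rho^{|\alpha|}$ fails over $\bK=\F_q(\!(t^{-1})\!)$ in general: $P(x)=x^q-x$ has Gauss norm $1$ on $\mathcal{O}$ but $\sup_{\mathcal{O}}|P|<1$, and the total-degree bound $\deg P_\bv\le d-1$ you invoke does not prevent this when $q\le d-1$. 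The identity does hold for $P_\bv$, but the right reason is that $P_\bv$ is \emph{multilinear} (degree at most $1$ in each entry of $A$): after rescaling $\bB$ to the unit polydisc and normalizing so that some coefficient is a unit, the reduction of $P_\bv$ modulo the maximal ideal is a nonzero multilinear polynomial over $\F_q$, which cannot vanish identically on $\F_q^{mn}$. (Alternatively, one can invoke equivalence of norms on the finite-dimensional $\bK$-vector space of multilinear polynomials.) With these two adjustments your argument is complete.
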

\begin{proof} We follow the idea of Lemma 5.1 in \cite{sh96}. Let $\sigma\colon G\to GL(V)$ be the exterior representation. Let $W=\{\bv\in V\colon \sigma(u_A)\bv=\bv\textrm{ for all }A \in M\}$ and denote by $p_W$ the projection $V\to W$ as a vector space.

The $H$-invariant subspace $W$ can be described as follows: if $r\le m$, then $$W=\textrm{span}\{\vec{e}_{s_1}\wedge\cdots\wedge\vec{e}_{S_k}\,|\,1\le s_1<\cdots<S_k\le m\}$$ and if $r>m$, then
$$W=\textrm{span}\{\vec{e}_{1}\wedge\cdots\wedge\vec{e}_{m}\wedge\vec{e}_{s_1}\wedge\cdots\wedge\vec{e}_{s_{r-m}}\,|\,m+1\le s_1<\cdots<s_{r-m}\le n\}.$$

Let $\{\mathbf{f}_{kl}\}_{1\le k\le m,1\le l\le n}$ be the standard basis of $M$. From the definition, we have $$\sigma(u_{\mathbf{f}_{kl}})\vec{e}_{s_1}\wedge\cdots\wedge\vec{e}_{S_k}=(\vec{e}_{s_1}+\delta_{(l+m),s_1}\vec{e}_{S_k})\wedge \cdots\wedge(\vec{e}_{S_k}+\delta_{(l+m),S_k}\vec{e}_{S_k})$$ and hence it follows that $(\sigma(u_{\mathbf{f}_{kl}})-Id)^2=0$ for each $(k,l)$. Let $\mathcal{I}=\{I=(i_{11},\cdots,i_{mn})\colon i_{kl}\in\{0,1\}\textrm{ for }1\le k\le m\textrm{ and }1\le l\le n\}$. 

For each $I=(i_{11},\cdots,i_{mn}) \in \mathcal{I}$ and $$A=\left(
\begin{array}{cccc}
y_{11} & y_{12} & \cdots & y_{1n} \\
y_{21} & y_{22} & \cdots & y_{2n} \\
\vdots & \vdots & \ddots & \vdots   \\
y_{m1} & y_{m2} & \cdots & y_{mn}
\end{array}\right),$$ let us define $$\tau(u_A^I)=\prod_{k=1}^m\prod_{l=1}^n\left(\sigma(u_{y_{kl}}\mathbf{f}_{kl})-Id\right)^{i_{kl}}.$$

Let $F=\sum_{k=1}^m\sum_{l=1}^n\mathbf{f}_{kl}$ and $T\colon V \to \underset{I\in\mathcal{I}}{\bigoplus} W$ be the linear transformation defined by $$T(\bv)=[p_W(\tau(u_{F}^I)\bv)]_{I\in\mathcal{I}}.$$
From the description of $W$, for each $\bv\ne 0$ in $V$ we can choose $I\in\mathcal{I}$ such that $\tau(u_F^I)\bv\ne W^\perp$.
Therefore, $T$ is injective.

For each $1\le k\le m$ and $1\le l\le n$, fix $x_{kl,0}$ and $x_{kl,1}$ in $\bK$ such that $x_{kl,i}\mathbf{f}_{kl}\in \bB$ and $|x_{kl,0}|\ne|x_{kl,1}|$. Take $\bx_I=\sum_{k=1}^{m}\sum_{l=1}^n x_{kl,i_{kl}}\mathbf{f}_{kl} \in \bB$ and
let us define $S : V \to  \underset{I\in\mathcal{I}}{\bigoplus} W$ by
 $$S (\bv)=[p_W(\sigma(u_{\bx_I})\bv)]_{I\in\mathcal{I}}.$$
Then $S (\bv)=Q \circ T(v)$ with the invertible linear transformation $$Q \colon \underset{I\in\mathcal{I}}{\bigoplus} W\to \underset{I\in\mathcal{I}}{\bigoplus} W$$ defined by
$$ Q_{I J} = x_{1 , i_1}^{j_1} \cdots x_{mn , i_{mn}}^{j_{mn}}Id_W:W\to W.$$ Note that
 $|\det(Q)|=\prod_{k=1}^{mn}|(x_{k,0}-x_{k,1})|^{2\textrm{dim}W}$. 

By injectivity of $S$, there exists $c_0>0$ such that 
\begin{align*}
& q^{\min \ba}\|\bv\|_\infty \le c_0q^{\min \ba}\|S(\bv)\|_\infty \le c_0 q^{\min \ba}\sup_{A\in\bB}\|p_W(\sigma(u_A)\bv)\|_\infty \\ &\le c_0\sup_{A\in\bB}\|p_W(\sigma(g_\ba u_A)\bv)\|_\infty \le c_0\sup_{A\in\bB}\|\sigma(g_\ba u_A)\bv\|_\infty 
\end{align*} which completes the proof. 
\end{proof}

\begin{ex} Let $G=SL(3,\bK)$ with $m=1$ and $n=2$, $V=\bigwedge^2\bK^3$ and $\sigma\colon G\to GL(V)$. In this case, $W=\bK\vec{e}_1\wedge\vec{e}_2+\bK\vec{e}_1\wedge\vec{e}_3$. For 
$$u_A=\left(
\begin{array}{ccc}
1 & y_1 & y_2 \\
0 & 1 & 0 \\
0 & 0 & 1  
\end{array}\right),
\qquad \sigma(u_A)=\left(
\begin{array}{ccc}
1 & 0 & -y_2 \\
0 & 1 & y_1 \\
0 & 0 & 1 
\end{array}\right).$$
For a vector $\bv$ in $V$ given by $a\vec{e_1}\wedge\vec{e_2}+b\vec{e_1}\wedge\vec{e_3}+c\vec{e_2}\wedge\vec{e}_3$, we have $$T(v)=\left[\begin{array}{c}
a\vec{e}_1\wedge\vec{e}_2+b\vec{e}_1\wedge\vec{e}_3\\
c\vec{e}_1\wedge\vec{e}_3\\
-c\vec{e}_1\wedge\vec{e}_2\\
0
\end{array}\right]$$ 
and
$$S(v)=\left[\begin{array}{c}
(a-x_{2,0}c)\vec{e}_1\wedge\vec{e}_2+(b+x_{1,0}c)\vec{e}_1\wedge\vec{e}_3\\
(a-x_{2,0}c)\vec{e}_1\wedge\vec{e}_2+(b+x_{1,1}c)\vec{e}_1\wedge\vec{e}_3\\
(a-x_{2,1}c)\vec{e}_1\wedge\vec{e}_2+(b+x_{1,0}c)\vec{e}_1\wedge\vec{e}_3\\
(a-x_{2,1}c)\vec{e}_1\wedge\vec{e}_2+(b+x_{1,1}c)\vec{e}_1\wedge\vec{e}_3
\end{array}\right].$$ 
Then $S(v)=Q\circ T(v)$ with the invertible linear transformation $Q\colon \underset{I\in\mathcal{I}}{\bigoplus} W\to \underset{I\in\mathcal{I}}{\bigoplus} W$ given by $$\left(\begin{array}{cccc}
Id_W & x_{1,0}Id_W & x_{2,0}Id_W & x_{1,0}x_{2,0}Id_W \\
Id_W & x_{1,1}Id_W & x_{2,0}Id_W & x_{1,1}x_{2,0}Id_W \\
Id_W & x_{1,0}Id_W & x_{2,1}Id_W & x_{1,0}x_{2,1}Id_W \\
Id_W & x_{1,1}Id_W & x_{2,1}Id_W & x_{1,1}x_{2,1}Id_W 
\end{array}\right).$$ In this case, $|\det(Q)|=|(x_{1,0}-x_{1,1})|^4|(x_{2,0}-x_{2,1})|^4$. 

\end{ex}

\begin{cor}\label{Nondivergence} Let $L$ be a compact subset of $X$ and $\bB\subset H$ be a ball centered at $e\subset H$. Then there exists $N=N(\bB,L)$ such that for every $0<\epsilon<1$,
 any $x=g\Gamma\in L$, and any $\ba\in\mathfrak{a}^+$ with $\min \ba\ge N$ we have
$$\lambda_H (\{A\in\bB \,|\, \delta(g_\ba u_A x)< \epsilon\})\ll\epsilon^{1/mn(m+n)}\lambda_H(\bB),$$
where the implied constant depends on $\bB$ and $L$.
\end{cor}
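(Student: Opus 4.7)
The plan is to apply Theorem~\ref{Ghosh} to the map $h \colon \bB \to GL(d,\bK)$ defined by $h(A) = g_\ba u_A g$, where $x = g\Gamma$. Since $\delta(h(A)\bZ^d) = \delta(g_\ba u_A x)$, a successful application will yield exactly the claimed bound. Because $L$ is compact in $X$, I first fix a compact lift $K_L \subset G$ of $L$ and assume the representative $g$ of $x$ lies in $K_L$; every constant produced below will then depend only on $\bB$ and $L$.

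For condition (1) of Theorem~\ref{Ghosh}, let $\Delta \le \bZ^d$ be primitive of rank $r$ and $v_\Delta \in \bigwedge^r \bZ^d$ the associated indivisible integer multivector, so that $\psi_\Delta(A) = \|g_\ba u_A g v_\Delta\|_\infty$. Each coordinate of $g_\ba u_A g v_\Delta$ in the standard basis of $\bigwedge^r \bK^d$ is a polynomial in the $mn$ entries of $A$ of total degree at most $r \le m+n$; Lemma~\ref{Polynomials}, applied with $mn$ variables and $s = m+n$, therefore shows each coordinate is $(C_0, \frac{1}{mn(m+n)})$-good on $\bK^{mn}$. These parameters are preserved under the pointwise maximum of absolute values, so $\psi_\Delta$ itself is $(C_0, \frac{1}{mn(m+n)})$-good with $C_0$ independent of $\Delta$.

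Condition (2) is where Lemma~\ref{Uniformity} enters. Applied to the vector $v = g v_\Delta$ it yields $\sup_{A \in \bB} \psi_\Delta(A) \ge b\, q^{\beta \min \ba}\, \|g v_\Delta\|_\infty$. Since $v_\Delta$ is indivisible in $\bigwedge^r \bZ^d$ we have $\|v_\Delta\|_\infty \ge 1$, and since $g$ ranges over the compact set $K_L$, the linear action of $g$ on $\bigwedge^r \bK^d$ distorts this norm by a factor bounded below by a constant $c_L > 0$ depending only on $L$. Taking $N = N(\bB,L)$ large enough that $b c_L q^{\beta N} \ge 1$, the value $\rho = 1$ becomes admissible in Theorem~\ref{Ghosh} as soon as $\min \ba \ge N$. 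Condition (3) is automatic from the discreteness of the $\bZ$-lattice $g_\ba u_A g \bZ^d$, which contains only finitely many primitive submodules of norm at most $1$.

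Assembling the three inputs, Theorem~\ref{Ghosh} gives
$$\lambda_H\bigl(\{A \in \bB : \delta(g_\ba u_A x) < \epsilon\}\bigr) \le d C_0\, \epsilon^{1/mn(m+n)} \lambda_H(\bB),$$
which is the stated inequality with implied constant $dC_0$ depending only on $\bB$ and $L$. The main obstacle I anticipate is the uniformity needed for condition (2): one must produce a single positive lower bound on $\|g v_\Delta\|_\infty$ valid simultaneously for every $g \in K_L$ and every primitive $\Delta \le \bZ^d$, and this is precisely what primitivity ($\|v_\Delta\|_\infty \ge 1$) combined with the compactness of $K_L$ (uniform norm distortion) provides.
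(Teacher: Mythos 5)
Your proof is correct and follows essentially the same route as the paper's: apply Theorem~\ref{Ghosh} to $h(A)=g_\ba u_A g$, verify condition (1) via Lemma~\ref{Polynomials}, and verify condition (2) by combining Lemma~\ref{Uniformity} with the uniform lower bound on $\|g v_\Delta\|_\infty$ coming from compactness of $L$ and discreteness of $\bigwedge^r\bZ^d$. You are slightly more explicit than the paper in choosing $N$ so that $\rho\ge 1$ (guaranteeing $\epsilon<\rho$ for all $\epsilon<1$) and in recording condition (3), but the argument is the same.
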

\begin{proof}  
Let $h\colon \bB \to G$ be given by $h(A)=g_{\ba}u_A g$. For any primitive submodule $\Delta \subset \bZ^d$ of rank $r$ for $1 \leq r \leq m+n$, there is a constant $C=C(r) >0$ such that $A\mapsto\|h(A)\Delta\|$ is $(C(r) ,\frac{1}{mnr})$-good by Lemma~\ref{Polynomials}. By letting $C = \min_{r=1, \cdots, m+n} \{ C(r) \}$ and $\alpha = \frac{1}{mn(m+n)}$, for any $\Delta$, $A\mapsto\|h(A)\Delta\|$ is $(C , \alpha)$-good.

 By compactness of $L$ and discreteness of $\bigwedge^r\bZ^d$ in $\bigwedge^r\bK^d$,
$$\inf\{\|g\bv\|_\infty \colon g\Gamma\in L, \bv\in\bigwedge\nolimits^r\bZ^d-\{0\},r=1,\cdots,d-1\}$$
is positive, say $c_1$. Together with Lemma \ref{Uniformity}, we can find $c>0$ such that for any $g\in \pi^{-1}(L)$ we have $\sup_{A\in\bB}\|g_\ba u_A g\bv\|_\infty \ge cq^{\min\ba}\|g\bv\|_\infty \geq c q^N c_1 $. Thus, the conditions of Theorem~\ref{Ghosh} are satisfied with $\rho=c q^N c_1$. 
\end{proof}


\section{Pointwise equidistribution with an error rate}
In this section, we prove Theorem \ref{pe} which gives the pointwise equidistribution for compactly supported functions in each $H$-orbit. Before proving Theorem \ref{pe}, we first prove effective equidistribution of $g_\ba$-translates of $H$-orbits, a positive characteristic analog of a theorem of Kleinbock-Margulis \cite{km12}. For the case of \emph{equal weights}, i.e. when $\ba=(a,\cdots,a,b,\cdots,b)$ such that $ma=nb$, see also \cite{mo11}. 

For a given $\phi\in L^2(X)$, we denote its $L^2$-norm by $\|\phi\|_2$. Let $K$ be the maximal compact subgroup $SL(n,\mathcal{O})$ of $G$ and $K^{(l)}$ be the $l$-th congruence subgroup $(Id+t^{-l}\textrm{Mat}(n,\mathcal{O}))\cap K$ of $K$. Following \cite{emmv15} for the adelic case (and R\"{u}hr's thesis \cite{ru15} for $p$-adic case), let us define the degree-$k$ Sobolev norm $S_k(\phi)$ of $\phi$ as follows.
Let $Av_l$ be the averaging projection to the set of $K^{(l)}$-invariant functions, i.e. $\displaystyle Av_l(\phi)(g)=\int_{K^{(l)}}\phi(gk)dk$ where $dk$ is the probability Haar measure on $K^{(l)}$. Let $\textrm{pr}[l]$ be the \emph{$l$-th level projection} $Av_l-Av_{l-1}$ for $l\ge1$ and $\textrm{pr}[0]=Av_0$. Define
$$S_k(\phi)^2\overset{\textrm{def}}{=}\sum_{l\geq 0} q^{lk}\|\textrm{pr}[l](\phi)\|_2^2.$$
For example, we have \begin{align*}S_k(\chi_{K^{(l)}})=&\left(\frac{1}{[K:K^{(l)}]}\right)^2+\sum_{j=1}^lq^{jk}\left[\left(\frac{\lambda(K^{(j-1)}-K^{(j)})}{[K^{(j-1)}:K^{(l)}]}\right)^2+\left(\frac{\lambda(K^{(j)})}{[K^{(j)}:K^{(l)}]}\right)^2\right]\\ \asymp& q^{(k-\textrm{dim}G)l}.\end{align*}
The following properties of the Sobolev norm will be used in the sequel.
\begin{lem}[Properties of Sobolev norm]\label{sobolev} Given $\phi\in C_c^\infty(\mathbf{K}^N)$, we have
\begin{enumerate}
\item $S_{N+1}$ dominates the supremum norm, i.e. $\|\phi\|_\infty\ll S_{N+1}(\phi)$.
\item For a given $\psi_1\in C_c^{\infty}(\mathbf{K}^N)$, if we define $\theta\in C_c^{\infty}(\mathbf{K}^N)$ by $\theta_1(x)=\phi(x)\psi_1(x)$, then $S_k(\theta_1)\ll S_k(\phi)S_k(\psi_1)$. 
\item For a given $\psi_2\in C_c^{\infty}(\mathbf{K}^M)$, if we define $\theta\in C_c^{\infty}(\mathbf{K}^{N+M})$ by $\theta_2(x_1,x_2)=\phi(x_1)\psi_2(x_2)$, then $S_k(\theta_2)\ll S_k(\phi)S_k(\psi_2)$.
\end{enumerate}
\end{lem}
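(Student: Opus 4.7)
The plan is to derive all three statements from a common \emph{Sobolev embedding}: if $f\in L^2(\bK^N)$ is invariant under the natural level-$l$ compact-open subgroup implicit in the definition of $S_k$ on $\bK^N$ (a subgroup of Haar measure $\asymp q^{-lN}$), then
$$\|f\|_\infty\ll q^{lN/2}\|f\|_2.$$
This follows by expressing $f(x)$ as its own average over a fundamental coset of that subgroup and applying Cauchy--Schwarz, the measure of the coset providing the gain factor.

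For (1), expand $\phi=\sum_{l\ge 0}\textrm{pr}[l](\phi)$, a finite sum since $\phi$ is locally constant, and apply the embedding termwise:
$$\|\phi\|_\infty\le \sum_l \|\textrm{pr}[l](\phi)\|_\infty \ll \sum_l q^{lN/2}\|\textrm{pr}[l](\phi)\|_2.$$
A Cauchy--Schwarz against the geometrically summable weight $q^{-l/2}$ absorbs the extra $q^{l/2}$ factor into $q^{l(N+1)/2}$, yielding $\|\phi\|_\infty\ll S_{N+1}(\phi)$.

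For (2), decompose $\theta_1=\sum_{i,j}\textrm{pr}[i](\phi)\cdot\textrm{pr}[j](\psi_1)$. Since each factor is invariant under its own level subgroup, the product is invariant under the intersection, so $\textrm{pr}[l]$ annihilates the summand whenever $l>\max(i,j)$. Combining the Sobolev embedding from (1) with H\"older yields the key estimate
$$\|\textrm{pr}[i](\phi)\cdot\textrm{pr}[j](\psi_1)\|_2 \ll q^{\min(i,j)N/2}\|\textrm{pr}[i](\phi)\|_2\,\|\textrm{pr}[j](\psi_1)\|_2.$$
Substituting into $S_k(\theta_1)^2=\sum_l q^{lk}\|\textrm{pr}[l](\theta_1)\|_2^2$, splitting the double sum by $\{i\le j\}$ and $\{i>j\}$ and applying Cauchy--Schwarz in the inner variable recovers $S_k(\theta_1)\ll S_k(\phi)S_k(\psi_1)$ once $k$ is taken large enough (essentially $k>N$) for the resulting geometric series to converge. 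Property (3) is analogous but cleaner, because the filtration on $\bK^{N+M}=\bK^N\times \bK^M$ is a product of the filtrations on the two factors: the level-$l$ projection of $\theta_2$ then decomposes as $\sum_{\max(i,j)=l}\textrm{pr}[i](\phi)\otimes\textrm{pr}[j](\psi_2)$, and orthogonality of distinct level pieces together with Fubini gives $S_k(\theta_2)\ll S_k(\phi)S_k(\psi_2)$ without any loss.

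The principal obstacle is (2). The pointwise product of two same-level pieces is generally not itself a single-level object, so one must carefully keep track of which pairs $(i,j)$ contribute to $\textrm{pr}[l](\theta_1)$ and balance the geometric growth factor $q^{\min(i,j)N/2}$ coming from the Sobolev embedding against the exponential weight $q^{lk}$ in the definition of $S_k$. This combinatorial bookkeeping — rather than any deep new input — is the heart of the matter, and it is what forces $k$ to be chosen sufficiently large for the double sum to converge.
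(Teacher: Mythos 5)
The paper does not actually give a proof of this lemma: it states only that ``the key idea of the proof is to use Cauchy--Schwarz inequality, after decomposing $\phi$ as $\sum_{l\ge 0}\mathrm{pr}[l]\phi$'' and defers details to the references \cite{emmv15} and \cite{ru15}. Your proposal is a correct implementation of exactly this idea, and so is essentially the same as the paper's intended argument.

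The one place where your sketch compresses too much is in (2): after the pointwise-product estimate $\|\mathrm{pr}[i](\phi)\cdot\mathrm{pr}[j](\psi_1)\|_2\ll q^{\min(i,j)N/2}\|\mathrm{pr}[i](\phi)\|_2\|\mathrm{pr}[j](\psi_1)\|_2$ and the restriction to $\max(i,j)\ge l$, you need \emph{two} applications of Cauchy--Schwarz, not one: first in the inner variable (say $i\le j$), $\sum_{i\le j}q^{iN/2}a_i\ll S_k(\phi)$ for $k>N$; and then in the outer variable $j$, using a small auxiliary weight $q^{\pm j\epsilon}$ so that $\sum_l q^{lk}\bigl(\sum_{j\ge l}b_j\bigr)^2\ll\sum_l q^{l(k-2\epsilon)}\sum_{j\ge l}q^{2j\epsilon}b_j^2\ll S_k(\psi_1)^2$ after interchanging the order of summation. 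Without the second, $\epsilon$-weighted Cauchy--Schwarz the $l$-sum does not converge and one only obtains a bound with an extra factor counting levels. With that refinement your argument is complete, and (1) and (3) are correct as written.
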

The key idea of the proof is to use Cauchy-Schwartz inequality, after decomposing $\phi$ as $\sum_{l\ge 0}\textrm{pr}[l]\phi$. We omit the details of the proof, which is analogous to \cite{emmv15} and \cite{ru15}.
 
Using arguments of \cite{emmv15}, one can easily obtain the following effective decay of matrix coefficients of the regular representation.
\begin{prop}\label{Exp mixing}
Given any functions $\phi$ and $\psi$ in $C_c^\infty(X)$, there exists $\delta_0>0$ such that
$$\left|\int_{X}\phi(x)\psi(g_{\ba}x)d\mu(x)-\int_{X}\phi d\mu\int_{X}\psi d\mu\right|\le C_0S_{d^2}(\phi)S_{d^2}(\psi)e^{-\delta_0\min \ba}$$ holds for some $C_0>0$ which depends only on the group $G$ and $\Gamma$.
\end{prop}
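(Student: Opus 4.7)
The plan is to adapt the argument of Einsiedler--Margulis--Mohammadi--Venkatesh \cite{emmv15} to the function field setting, reducing the assertion to a quantitative decay of matrix coefficients on $L^2_0(X)$ together with a level-wise Sobolev bookkeeping. First I would subtract constants: writing $\phi_0=\phi-\int_X\phi\,d\mu$ and $\psi_0=\psi-\int_X\psi\,d\mu$, the quantity to estimate becomes the matrix coefficient $\langle \phi_0,\pi(g_\ba)\psi_0\rangle_{L^2(X)}$ of the regular representation $\pi$ on $L^2_0(X)$. Both $\phi_0$ and $\psi_0$ are still in $C_c^\infty(X)$, so they are fixed by some congruence subgroup $K^{(l)}$, and all the Sobolev information is preserved up to constants.

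Next I would carry out the level decomposition $\phi_0=\sum_{l_1\ge 0}\mathrm{pr}[l_1]\phi_0$, $\psi_0=\sum_{l_2\ge 0}\mathrm{pr}[l_2]\psi_0$ and rewrite
\[
\langle \phi_0,\pi(g_\ba)\psi_0\rangle=\sum_{l_1,l_2\ge 0}\bigl\langle \mathrm{pr}[l_1]\phi_0,\pi(g_\ba)\mathrm{pr}[l_2]\psi_0\bigr\rangle.
\]
Each term is a matrix coefficient between vectors invariant under the compact open subgroup $K^{(\max(l_1,l_2))}$. The key analytic input is the quantitative Howe--Moore / spectral gap statement for $G=SL(d,\bK)$ acting on $L^2_0(X)$: for some $\delta>0$ depending only on $G$ and $\Gamma$, and for vectors $u,v$ that are $K^{(l)}$-invariant, one has a bound of the form
\[
|\langle u,\pi(g_\ba)v\rangle|\ll q^{cl}\,e^{-\delta\min\ba}\,\|u\|_2\|v\|_2,
\]
where the factor $q^{cl}$ accounts for the growth of the dimension of $K^{(l)}$-fixed vectors under the spherical/Iwahori decomposition. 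This is precisely the function field analog of the estimate used in \cite{emmv15}; one establishes it by passing to $K^{(l)}$-fixed vectors, bounding the spherical functions of the principal series of $SL(d,\bK)$ along the Cartan direction $g_\ba$, and using property $(T)$ for $d\ge 3$ (or the explicit Ramanujan-type bound) to get uniform exponential decay in $\min\ba$.

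Granted that estimate, I would assemble everything using Cauchy--Schwarz. For each pair $(l_1,l_2)$ the bound above gives
\[
\bigl|\langle \mathrm{pr}[l_1]\phi_0,\pi(g_\ba)\mathrm{pr}[l_2]\psi_0\rangle\bigr|\ll q^{c(l_1+l_2)}\,e^{-\delta\min\ba}\,\|\mathrm{pr}[l_1]\phi_0\|_2\|\mathrm{pr}[l_2]\psi_0\|_2.
\]
Choosing the Sobolev exponent $k=d^2$ large enough so that $2c<k$, I split the factor $q^{c(l_1+l_2)}$ as $q^{(k/2)l_1}q^{(k/2)l_2}\cdot q^{-(k/2-c)(l_1+l_2)}$; summing first in $l_1,l_2$ against $q^{-(k/2-c)(l_1+l_2)}$ via Cauchy--Schwarz and recognising $\sum_l q^{lk}\|\mathrm{pr}[l]\phi\|_2^2=S_k(\phi)^2$ yields the desired bound $C_0\,S_{d^2}(\phi)\,S_{d^2}(\psi)\,e^{-\delta_0\min\ba}$ with $\delta_0=\delta$ and a harmless shrinkage to absorb the geometric sum. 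A small verification remains that $S_k(\phi_0)\le S_k(\phi)+|\int\phi|\ll S_k(\phi)$, which is immediate.

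The main obstacle is establishing the level-dependent matrix coefficient estimate in positive characteristic; one has to invoke the classification of unitary representations of $SL(d,\bK)$ with a $K^{(l)}$-fixed vector and translate the decay of the corresponding spherical/Iwahori-fixed matrix coefficients along the Cartan flow into a bound that is exponential in $\min\ba$ and polynomial in $q^l$. Once this is in place, the rest is a straightforward Cauchy--Schwarz sum matching the definition of $S_{d^2}$, exactly as in the adelic and $p$-adic treatments of \cite{emmv15} and \cite{ru15}.
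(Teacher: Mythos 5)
Your high-level strategy is the same as the paper's: reduce to exponential decay of matrix coefficients on $L^2_0(X)$, then convert a smoothness-dependent constant into the Sobolev norm $S_{d^2}$ via the level decomposition $\sum_l\mathrm{pr}[l]$ and Cauchy--Schwarz, exactly as in subsection A.8 of \cite{emmv15} (equivalently Proposition 3.2.1 of \cite{ru15}). The paper's proof is a two-line citation: it invokes Theorem 2.1 of \cite{agp12} for the bound $C\|\phi\|_2\|\psi\|_2\,e^{-\delta_0\min\ba}$ with $C$ depending on the level of smoothness, and then appeals to \cite{emmv15}/\cite{ru15} for exactly the bookkeeping you spell out. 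So the architecture of your argument is correct, and the Sobolev part (split $q^{c(l_1+l_2)}=q^{kl_1/2}q^{kl_2/2}q^{-(k/2-c)(l_1+l_2)}$, sum the geometric tail, recognise $S_k$) is a faithful expansion of what the paper leaves implicit.

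Where your proposal differs from the paper --- and where it has a gap --- is in the level-dependent matrix coefficient estimate itself. You correctly identify it as the main obstacle, but you propose to establish it from scratch by classifying unitary representations of $SL(d,\bK)$ with a $K^{(l)}$-fixed vector, bounding spherical and Iwahori-fixed matrix coefficients along the Cartan direction, and invoking property $(T)$ for $d\ge 3$. None of this is carried out, and it is not needed: the paper sidesteps all of it by citing \cite{agp12}, which already supplies exponential decay with a constant controlled by the level of invariance (the polynomial factor $q^{cl}$ in your display is then extracted by tracking that constant, which is what the EMMV/R\"uhr argument does). Your parenthetical "property $(T)$ for $d\geq 3$" also leaves $d=2$ open --- the proposition is stated for all $d=m+n\geq 2$, and $SL(2,\bK)$ has no property $(T)$, so one needs a separate spectral gap input for $d=2$ (which \cite{agp12} and \cite{oh02} handle). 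A small further slip: $\phi_0=\phi-\int_X\phi\,d\mu$ is smooth and in $L^2$ but not compactly supported since constants are not in $C_c^\infty(X)$; this is harmless because $\mathrm{pr}[l]\phi_0=\mathrm{pr}[l]\phi$ for $l\geq 1$, so $S_k(\phi_0)\ll S_k(\phi)$ still holds, but the claim "still in $C_c^\infty(X)$" as written is false.
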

\begin{proof}
It follows from Theorem 2.1 of \cite{agp12} (see also \cite{oh02} for the setting more general than the function field case) that there exists $\delta_0>0$ such that
$$\left|\int_{X}\phi(x)\psi(g_{\ba}x)d\mu(x)-\int_{X}\phi d\mu\int_{X}\psi d\mu\right|\le C\|\phi\|_2\|\psi\|_2e^{-\delta_0\min \ba}$$ holds for some $C>0$ depending on the smoothness of the functions $\phi$ and $\psi$, i.e. the choice of compact subgroup $U$ in the definition of the smooth function. By replacing the $L^2$-norm above by a Sobolev norm of degree $d^2$ which is equal to $\textrm{dim}\,G+1$, we apply the argument of subsection A.8 of \cite{emmv15} (or Proposition 3.2.1 of \cite{ru15}) to conclude that the implied constant in the statement of the Proposition depends only on $G$ and $\Gamma$. 
\end{proof}
Using Proposition~\ref{Exp mixing}, we prove the effective equidistribution of expanding translates of $H$-orbits.
In the proof of the theorem, we will use the decomposition of $G$ into $$G=H^- H^0 H,$$ where
$H^-=\left\{
\begin{psmallmatrix} 
I_m & 0 \\
L & I_n 
\end{psmallmatrix} 
\colon L\in\textrm{Mat}_{n\times m}(\bK)\right\}$ and
$$H^0=\left\{
\begin{psmallmatrix} 
P & 0 \\
0 & Q 
\end{psmallmatrix} 
: P\in GL(m,\bK),Q\in GL(n,\bK), \det(P)\det(Q)=1\right\}.$$ 
It is easy to check that the decomposition is unique, more precisely,
$$\begin{psmallmatrix} 
A & B \\
C & D 
\end{psmallmatrix}  = \begin{psmallmatrix} 
1 & 0 \\
CA^{-1} & 1 
\end{psmallmatrix} \begin{psmallmatrix} 
A& 0 \\
0 & D-CA^{-1}B 
\end{psmallmatrix} \begin{psmallmatrix}
1 & A^{-1}B \\
0 & 1 
\end{psmallmatrix} .
$$
Before proving the effective equidistribution for general case, we present the proof of the following special case.
\begin{prop}\label{equalweight} Let $\be=(n,\ldots,n,m,\ldots,m)\in\mathfrak{a}^+$. Let $f\in C_c^{\infty}(H)$, $r>0$ and $x\in X$ be such that supp$\,f\subset B_H(r)$ and $\pi_x$ is injective on $B_G(2r)$. Then for any $\phi\in C_c^{\infty}(X)$ with $\int_X \phi=0$, there exists $E=E(r,\phi)$ such that for any $i\in\mathbb{N}$ we have
$$\left|\int_H f(h)\phi(g_\be^ihx)d\lambda_H(h)\right|\le ES_{d^2}(f)S_{d^2}(\phi)e^{-\delta_0i\min \be}$$ with the same $\delta_0$ as in Proposition~\ref{Exp mixing}.
\end{prop}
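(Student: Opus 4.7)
The plan is to apply the Kleinbock--Margulis thickening argument, adapted to the non-archimedean setting and exploiting two features specific to equal weights: $g_\be$ commutes with $H^0$ (since it acts as a scalar on each of the two diagonal blocks), and $g_\be^i h^- g_\be^{-i}$ contracts every $h^- \in H^-$ by $t^{-(m+n)i}$. Since $\phi \in C_c^\infty(X)$, fix a compact open subgroup $U \subset G$ leaving $\phi$ invariant. Pick small compact-open neighborhoods $V^- \subset H^- \cap U$ and $V^0 \subset H^0 \cap U$ of the identity; the contraction ensures $g_\be^i V^- g_\be^{-i} \subset V^-$ for every $i \ge 0$, so the product $(g_\be^i V^- g_\be^{-i}) \cdot V^0$ remains in $U$. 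Let $\psi^-$ on $H^-$ and $\psi^0$ on $H^0$ be nonnegative bumps supported on $V^-$ and $V^0$, each normalized to have total Haar mass $1$.

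Shrinking $V^-, V^0$ further if necessary so that $V^- V^0 B_H(r) \subset B_G(2r)$, I would define $\tilde F : G \to \R$ by $\tilde F(h^- h^0 h) = \psi^-(h^-)\psi^0(h^0) f(h)$ on the support and zero elsewhere; injectivity of $\pi_x$ on $B_G(2r)$ lets $\tilde F$ descend to $F \in C_c^\infty(X)$. Applying Proposition~\ref{Exp mixing} to $F$ and $\phi$ (using $\int_X \phi\, d\mu = 0$) yields
\[
\left|\int_X F(y)\,\phi(g_\be^i y)\,d\mu(y)\right| \le C_0\, S_{d^2}(F)\, S_{d^2}(\phi)\, e^{-\delta_0 i \min \be}.
\]
Now I would unfold the left side using the local Haar factorization $d\lambda_G = c\, d\lambda_{H^-}\, d\lambda_{H^0}\, d\lambda_H$ on $V^- V^0 B_H(r)$, obtaining by Fubini
\[
\int_X F\cdot(\phi\circ g_\be^i)\,d\mu = c \iiint \psi^-(h^-)\psi^0(h^0) f(h)\,\phi\bigl(g_\be^i h^- h^0 h\, x\bigr)\,d\lambda_{H^-}\, d\lambda_{H^0}\, d\lambda_H.
\]

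Writing $g_\be^i h^- h^0 h\, x = (g_\be^i h^- g_\be^{-i})\, h^0\, g_\be^i h\, x$ (using $g_\be h^0 = h^0 g_\be$) and observing that $(g_\be^i h^- g_\be^{-i})\, h^0 \in U$ by construction, the $U$-invariance of $\phi$ gives $\phi(g_\be^i h^- h^0 h x) = \phi(g_\be^i h\, x)$ \emph{exactly}, not merely up to a small error. Since $\int \psi^- = \int \psi^0 = 1$, the triple integral collapses to $c \int_H f(h)\,\phi(g_\be^i h x)\, d\lambda_H(h)$. Combining with the mixing bound and invoking Lemma~\ref{sobolev} to factor $S_{d^2}(F) \ll S_{d^2}(\psi^-)\, S_{d^2}(\psi^0)\, S_{d^2}(f)$, where the first two factors depend only on $r$, I would absorb $c^{-1}$, $C_0$, and the bump Sobolev norms into a single constant $E = E(r,\phi)$, yielding the claim. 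The main technical step is selecting $V^-, V^0$ small enough to satisfy the three simultaneous constraints (contained in $U$, product contained in $B_G(2r)$, compatible with injectivity of $\pi_x$); the uniform contraction of $H^-$ under $g_\be$ is exactly what keeps the $U$-containment valid for all $i$, converting what would be a Taylor-type error term in the archimedean case into exact equality and yielding the clean exponential decay.
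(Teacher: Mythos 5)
Your proposal is correct and follows essentially the same thickening strategy as the paper's own proof. The paper uses the decomposition $G = H^- H^0 H$, picks a congruence subgroup $K^{(l)}$ inside $B_G(r/2)$ under which both $f$ and $\phi$ are invariant, thickens $f$ to $\varphi(h^-h^0hx) = f^-(h^-)f^0(h^0)f(h)$ where $f^\pm$ are normalized characteristic functions of $K^{(l)}\cap H^\mp$, and then uses the exact identity $\phi(g_\be^i h^- h^0 h x) = \phi(g_\be^i h^- h^0 g_\be^{-i}\, g_\be^i h x) = \phi(g_\be^i h x)$ — valid because $g_\be^i(K^{(l)}\cap H^- H^0)g_\be^{-i} \subset K^{(l)}$ — before invoking Proposition~\ref{Exp mixing} and Lemma~\ref{sobolev}. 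Your argument is the same modulo cosmetics: you use general normalized bumps $\psi^-, \psi^0$ rather than characteristic functions, and you split the conjugation explicitly as $(g_\be^i h^- g_\be^{-i})h^0$ via the commutation $g_\be h^0 = h^0 g_\be$ rather than keeping $g_\be^i h^- h^0 g_\be^{-i}$ together, but these are equivalent since $g_\be$ centralizes $H^0$ in the equal-weight case. One small point you should make explicit, which the paper does: when unfolding with the local Haar factorization $d\lambda_G = d\lambda_{H^-}\,d\lambda_{H^0}\,d\lambda_H$, a modular character of $H^0$ appears in general; the paper notes that it is trivial on $K^{(l)}\cap H^0$ (a compact subgroup), which is why the factorization constant is benign. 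Your ``constant $c$'' silently absorbs this, which is fine, but worth flagging so the Fubini step is justified.
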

\begin{proof} Note that there is a level $l$ congruence subgroup $K^{(l)}$ of $K$ such that $f$ and $\phi$ are invariant under $K^{(l)}\cap H$ and $K^{(l)}$, respectively. Taking $l$ large enough, we may assume that $K^{(l)}\subset B_G(\frac{r}{2})$. Choose $$f^{-}=\frac{1}{\lambda_{H^-}(K^{(l)}\cap H^-)}\chi_{K^{(l)}\cap H^-}\textrm{ and }f^0=\frac{1}{\lambda_{H^0}(K^{(l)}\cap H^0)}\chi_{K^{(l)}\cap H^0}.$$ If we define $\widetilde{f}(h^-h^0)=f^-(h^-)f^0(h^0)$, then $\textrm{supp}\,\widetilde{f}\subset B_G(r)$ and $S_{d^2}(\widetilde{f})\asymp q^{(d^2-\textrm{dim} H^-H^0)l}=q^{(mn+1)l}$. Define $\varphi\in C_c^{\infty}(X)$ by $\varphi(h^-h^0hx)=\widetilde{f}(h^-h^0)f(h)$, which is supported on the small neighborhood of $x\in X$. This definition makes sense because $\pi_x$ is injective on $B_G(2r)$. Since the modular function on $K^{(l)}\cap H^0$ is trivial and $g_\be^ih^-h^0g_\be^{-i}\subset K^{(l)}$ when $h^-h^0\in K^{(l)}$, we have
\begin{align*}
&\left|\int_H  f(h)\phi(g_{\be}^i hx) d\lambda_H(h)- \int_Gf^-(h^-)f^0(h^0)f(h)\phi(g_\be^ih^-h^0hx)d(h^-h^0h)\right| \\
=&\left|\int_G  f^-(h^-)f^0(h^0)f(h)[\phi(g_{\be}^ihx)-\phi(g_\be^ih^-h^0g_\be^{-i}g_\be^ihx)]d(h^-h^0h)\right|=0.
\end{align*}
Therefore, \begin{align*}&\left|\int_H  f(h)\phi(g_{\be}^i hx) d\lambda_H(h)\right|=\left|\int_X\varphi(gx)\phi(g_\be^igx)d\mu(g)\right|\\
\le&C_0S_{d^2}(\varphi)S_{d^2}(\phi)e^{-\delta_0 i\min \be}\le C_0q^{(mn+1)l}S_{d^2}(f)S_{d^2}(\phi)e^{-\delta_0i\min \be}
\end{align*}
by Proposition~\ref{Exp mixing} and Lemma~\ref{sobolev}, (3). Since $l$ depends only on $r$ and the smoothness of $\phi$, this completes the proof.
\end{proof}
Using the above proposition, we will prove the general case. Let us define the integral vector $\overline {\mathbf{a}}$ of equal weight
$\bs= (\min \ba)\be$ where $\be=(n,\ldots,n,m\ldots,m)$.
Note that for $i$ such that $\ell-1 \leq \frac{i}{2(m+n)} < \ell$ for some $\ell \in \mathbb{N}$,
$\bt\in\mathfrak{a}^+$ and $\min (\bt)\ge \min \ba i/2$. 
\begin{thm}[Effective equidistribution of expanding translates]\label{Effective} There exists $\delta_1>0$ such that the following holds. For any $f\in C_c^\infty(H)$, $\phi\in C_c^\infty(X)$ and for any compact $L\subset X$, there exists $C_1=C_1(f,\phi,L)$ such that for all $x\in L$ and $i \in\mathbb{N}$, we have
$$\left|\int_H f(h)\phi(g_\ba^i hx)d\lambda_H(h)-\int_H f(h)d\lambda_H(h)\int_{X} \phi(x)d\mu(x)\right|\le C_1e^{-\delta_1 i \min \ba }.$$
\end{thm}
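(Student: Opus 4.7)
\emph{Proof plan.} The strategy is to reduce the general-weight case to the equal-weight case of Proposition~\ref{equalweight} by decomposing $g_\ba^i$ as a product of a high iterate of $g_\bs$ and a single ``correction'' diagonal element, and then absorbing the correction into the test function. Following the remark preceding the statement, for each $i \in \N$ choose $\ell = \ell(i)$ with $\ell - 1 \leq i/(2(m+n)) < \ell$, so that $\bt \in \mathfrak{a}^+$ and $\min(\bt) \geq (\min\ba)i/2$. Since all diagonal matrices commute,
$$g_\ba^i = g_\bt\, g_\bs^\ell = g_\bt\, g_\be^{\ell \min\ba}.$$
Define $\phi_0(y) := \phi(g_\bt\, y) \in C_c^\infty(X)$, so that $\phi(g_\ba^i h x) = \phi_0(g_\be^{\ell \min\ba} h x)$.

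The critical observation is that $\phi_0$ has the same Sobolev norm and mean as $\phi$, \emph{independently of} $i$. Right $K^{(l)}$-invariance is preserved under left translations, so $(\textrm{pr}[l]\phi_0)(g) = (\textrm{pr}[l]\phi)(g_\bt g)$; combined with $g_\bt$-invariance of $\mu$ this yields $\|\textrm{pr}[l]\phi_0\|_2 = \|\textrm{pr}[l]\phi\|_2$ for all $l$, hence $S_{d^2}(\phi_0) = S_{d^2}(\phi)$ and $\int_X \phi_0\, d\mu = \int_X \phi\, d\mu =: \bar\phi$. I expect this to be the main obstacle: a naive attempt to absorb the correction into $f$ by conjugation on $H$ would produce a Sobolev-norm blow-up of order $q^{c\,\max(\bt)}$ that would destroy the decay.

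A direct adaptation of the proof of Proposition~\ref{equalweight} now applies to $f$, $\phi_0$, and the exponent $\ell \min\ba$ without the mean-zero hypothesis: Proposition~\ref{Exp mixing} already supplies a subtracted term $\int_X \varphi\, d\mu \cdot \int_X \phi_0\, d\mu$, and by construction $\int_X \varphi\, d\mu = \int_H f\, d\lambda_H$. This gives
$$\left|\int_H f(h)\phi(g_\ba^i h x)\, d\lambda_H - \int_H f\, d\lambda_H \cdot \int_X \phi\, d\mu\right| \leq C(f,\phi,L)\, e^{-\delta_0\, \ell (\min\ba)(\min\be)}.$$
The small-support hypothesis $\mathrm{supp}\,f \subset B_H(r)$ with $\pi_x$ injective on $B_G(2r)$ is met via a standard locally constant partition of unity on $\mathrm{supp}\,f$ by balls of radius smaller than the injectivity radius on $(\mathrm{supp}\,f)\cdot L$, reducing to the small-support case after the substitution $h \mapsto h_j h$ (which keeps the base point inside the same compact set). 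Finally, since $\ell > i/(2(m+n))$ and $\min\be = \min(m,n)$, the exponent is at least $\delta_1 i \min\ba$ with $\delta_1 := \delta_0 \min(m,n)/(2(m+n))$; the finitely many $i$ for which $\bt$ may fail to lie in $\mathfrak{a}^+$ are absorbed into $C_1$ by the trivial bound $\|f\|_1 \|\phi\|_\infty + |\bar\phi|\|f\|_1$.
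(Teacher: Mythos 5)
Your decomposition $g_\ba^i = g_\bt\, g_{\bs\ell}$ is the same one the paper uses, and absorbing the ``correction'' $g_\bt$ into the test function via $\phi_0(y) := \phi(g_\bt y)$ is a natural idea, but it runs into precisely the Sobolev blow-up you flagged as the main obstacle; the move from $f$ to $\phi$ does not avoid it. The issue is which kind of $K^{(l)}$-invariance is actually used. For $\phi$ on $X = G/\Gamma$ (with $\Gamma$ acting on the right), the invariance that enters the proof of Proposition~\ref{equalweight} is \emph{left} $K^{(l)}$-invariance: the key algebraic identity there is
$$\phi\bigl(g_\be^{i}h^-h^0hx\bigr) \;=\; \phi\bigl((g_\be^{i}h^-h^0g_\be^{-i})\,g_\be^{i}hx\bigr) \;=\; \phi\bigl(g_\be^{i}hx\bigr),$$
which requires $\phi(kz)=\phi(z)$ for $k\in K^{(l)}$, and it is this level $l$ that feeds into $E=C_0 q^{(mn+1)l}$. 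Left $K^{(l)}$-invariance is \emph{not} preserved by left translation by $g_\bt$: one has $\phi_0(kz)=\phi\bigl((g_\bt k g_\bt^{-1})\,g_\bt z\bigr)$, and $g_\bt K^{(l)} g_\bt^{-1}\subset K^{(l-2\max\bt)}$ but not $K^{(l)}$, since the $(p,q)$-entries of $g_\bt M g_\bt^{-1}$ are scaled by up to $q^{2\max\bt}$. So $\phi_0$ is only $K^{(l_\phi+2\max\bt)}$-invariant, where $\max\bt$ grows \emph{linearly} in $i$ with a coefficient governed by $\max\ba-\min\ba$ (and so is nontrivial exactly when $\ba$ is not of equal weight, which is the whole point of the theorem). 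Consequently $E(r,\phi_0)\gtrsim q^{(mn+1)(l_\phi + 2\max\bt)} \gtrsim q^{c\,i}$ with $c$ depending on $\max\ba$, and this has no tunable parameter; since $\delta_0$ depends only on $(G,\Gamma)$ and may be small while $\max\ba$ may be large, $q^{c\,i}$ need not be beaten by $e^{-\delta_0 \ell \min\bs}$. Your statement that ``right $K^{(l)}$-invariance is preserved under left translations'' is true as an operator identity, but right $K^{(l)}$-averaging is not well defined on $G/\Gamma$ and is not the invariance that Proposition~\ref{equalweight} uses.

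The paper avoids this by leaving $\phi$ untouched and instead pushing $g_\bt$ onto the \emph{base point}: they introduce a mollifier $\chi$ at scale $q^{-\kappa i}$, write the integral as $\int f_h(y)\,\phi(g_{\bs\ell}\,y\,g_\bt h x)\,d\lambda_H(y)$ with $f_h$ supported in a tiny ball, and then apply Proposition~\ref{equalweight} with $r=q^{-\kappa i}$ and base point $g_\bt h x$. This also produces a blow-up, $E(q^{-\kappa i},\phi)\asymp q^{(mn+1)\kappa i}$ and $S_{d^2}(\chi)\asymp q^{(m^2+mn+n^2)\kappa i}$, but the growth rate is an explicit multiple of the free parameter $\kappa$, which can be taken small enough to lose to the decay $e^{-\delta_0\ell\min\bs}$. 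The price is that the moving base point $g_\bt h x$ can wander into the cusp, which is exactly why the nondivergence estimate (Corollary~\ref{Nondivergence}), the $V_1^\epsilon/V_2^\epsilon$ split, and the $i$-dependent choice of $\epsilon$ enter the proof. Your plan omits all of this because it keeps $x$ fixed — that part is fine and would be a genuine simplification if it worked — but the uncontrolled blow-up in the smoothness level of $\phi_0$ is the ingredient that is missing.
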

\begin{proof} By choosing large enough $C_1$, it is enough to show the proposition for large enough $i$. The proof is analogous to the argument of \cite{km12}.
Throughout the proof, we denote by $B_H(r), B_G(r)$ the ball of radius $r$ centered at $e$ in $H, G$, respectively.

Without loss of generality, we may assume that $\int_{X} \phi \,d\mu=0$ and $\phi$ is invariant under an open compact subgroup $K_\phi$ of $G$. Let $\kappa>0$ be small enough so that $K_\phi\supset B_G(q^{-\kappa i})$, and set $\chi=\frac{1}{\lambda_H(B_H(q^{-\kappa i}))}\chi_{B_H(q^{-\kappa i})}$.
Choose a ball $B=B_H(r_1)$ containing the support of $f$ and   
 enlarge the ball $B$ slightly to obtain $\widetilde{B}=B_H(r_1+q^{-2\min(\bt)} q^{-\kappa i})$. 

We choose $i_0$ large enough so that for all $i \geq i_0$, we have $\lambda_H(\widetilde{B})\le 2\lambda_H(B)$ and $\min (\bt) \ge N(\widetilde{B},L)$ for the constant $N(\widetilde{B}, L)$ of Corollary \ref{Nondivergence}. It follows that given any $\epsilon>0$, any $x \in L$ and $V_1^{\epsilon}:=\{h\in \widetilde{B}\,|\,\delta(g_{\bt} h x)<\epsilon\},$ we have
$\lambda_H (V_1^\epsilon) \ll \epsilon^{1/(mn(m+n))} \lambda_H(\widetilde{B}) \leq 2\epsilon^{1/(mn(m+n))} \lambda_H(B).$
 Therefore it holds that
\begin{align*}&\int_{V_1^{\epsilon}} f(h)\phi(g_{\ba i} hx)d\lambda_H(h)
\le \lambda_H(V_1^{\epsilon})\|f\|_\infty\|\phi\|_\infty \ll \epsilon^{\frac{1}{mnd}}\lambda_H(B)S_{d^2}(f) S_{d^2} (\phi) ,
\end{align*}
for the degree-$d^2$ Sobolev norm by Lemma~\ref{sobolev}, (1).

Now let us compute the integral on the complement $V_2^{\epsilon}:=\widetilde{B}\backslash V_1^{\epsilon}$. Given any $y\in B_H(q^{-\kappa i})$, let $k_y=g_{\bt}^{-1}y g_\bt \in H$. Since $d(e,k_y)\le q^{-2\min(\bt)}d(e,y)$, we have $k_y \in B_H(q^{-2\min (\bt)} q^{-\kappa i}).$
 Thus, the support of all functions of the form $h\mapsto f(kh)$ is contained in $\widetilde{B}$.

If we define $f_h(y)=f(k_y h) \chi(y)=f(g_\bt^{-1}yg_\bt h)\chi(y)$, then we have
\begin{align*}\displaybreak[0]&\left|\int_{V_2^{\epsilon}} f(h)\phi(g_{\ba}^i hx)d\lambda_H(h)\right|=\left|\int_{V_2^{\epsilon}} f(h)\phi(g_{\ba i} hx)d\lambda_H(h)\int_H \chi(y) d\lambda_H(y)\right|\\\displaybreak[0]
=&\left|\int_{V_2^{\epsilon}} \int_H f(k_yh)\chi(y)\phi(g_{\bs \ell} yg_\bt hx) d\lambda_H(y)d\lambda_H(h)\right| \\\displaybreak[0]
\le&\int_{V_2^{\epsilon}}\left|\int_Hf_{h}(y)\phi(g_{\bs \ell} yg_\bt hx)d\lambda_H(y)\right|d\lambda_H(h).
\end{align*}

We apply Proposition~\ref{equalweight} with $q^{-\kappa i}$ in place of $r$, $f_h$ in place of $f$ and $g_\bt hx$ in place of $x$.
Note that the injectivity radius of $X_\epsilon$ is at least $c_0\epsilon^d$ for some $c_0>0$ (See the argument of Proposition 3.5 in \cite{km12}). We choose $\epsilon=(\frac{2}{c_0})^{1/d} q ^{-\kappa i/d}$ so that for $h\in V_2^{\epsilon}$, the projection map $\pi= \pi_{g_\bt hx}\colon G\to X$ given by $\pi_{g_\bt hx}(g)=gg_\bt hx$ is injective for $g \in B_G(2q^{-\kappa i})$. Thus, all the assumptions of Proposition~\ref{equalweight} holds and we have
\begin{align*}
&\int_{V_2^{\epsilon}}\left|\int_H  f_{h}(y)\phi(g_{\bs \ell} yg_\bt hx) d\lambda_H(y) \right| d\lambda_H(h) \\ \ll&S_{d^2}(\phi) E(q^{-\kappa i},\phi)e^{-\delta_0 \ell \min \bs}\int_{V_2^{\epsilon}}S_{d^2}(f_h)\,d\lambda_H(h)
\end{align*}
for $\delta_0$ in Proposition~\ref{Exp mixing}. Since the mapping $\eta\colon y\mapsto k_y $ is contracting, the $\ell^2$-norm of the $l$-th level projection of $f$ will not increase after precomposition with $\eta$ for each $l\ge 0$. It follows from Lemma~\ref{sobolev} that $S_{d^2}(f_h)\ll S_{d^2}(f)S_{d^2}(\chi)$. Therefore,  
\begin{align*}
\left|\int_H f(h)\phi(g_\ba^i hx)d\lambda_H(h)\right|& \ll q^{-\frac{\kappa i}{mnd^2}}\lambda_H(B)S_{d^2}(f)S_{d^2}(\phi)+\\&q^{-\delta_0 l\min \bs/\log q}\lambda_H(B)S_{d^2}(f)S_{d^2}(\phi)S_{d^2}(\chi)E(q^{-\kappa i},\phi).
\end{align*}
Note that $S_{d^2}(\chi)\ll q^{(m^2+mn+n^2)\kappa i}$ and $E(q^{-\kappa i},\phi)\ll q^{(mn+1)\kappa i}$.
Recall that $\ell-1 \leq \frac{i}{2(m+n)} < \ell$ and hence, if $\kappa>0$ is small enough, then $\delta_0\ell \min\ba/\log q>\kappa i(d^2+1)$. This completes the proof.
\end{proof} 
%


Let us define $$I_{f,\phi,\psi}^{(i,j)}(g_{\ba},x_1,x_2)=\int_H f(h)\phi(g_\ba^ihx_1)\psi(g_\ba^jhx_2)d\lambda_H(h)$$
for $f\in C_c(H)$, $\phi,\psi\in C_c(X)$, $x_1,x_2\in X$ and $i,j\in\mathbb{N}$. The following double equidistribution is the main ingredient of the proof of Theorem~\ref{pe}.

\begin{prop}[Double equidistribution]\label{Double} There exists $\delta_2>0$ with the following property. Given $f\in C_c^\infty(H)$, $\phi,\psi\in C_c^\infty(X)$ and a compact subset $L$ of $X$, there exists $C_2=C_2(f,\phi,\psi,L)$ such that for any $x_1,x_2\in L$ and $i, j\in \mathbb{N}$ with $i\le j$, we have
$$\left|I_{f,\phi,\psi}^{(i,j)}(g_{\ba},x_1,x_2)-\int_H f d\lambda_H(h)\int_{X}\phi d\mu\int_{X}\psi d\mu\right|\le C_2e^{-\delta_2\min(i,j-i)\min \ba}.$$
\end{prop}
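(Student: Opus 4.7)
The plan is to mirror the three-stage structure of Theorem~\ref{Effective}'s proof, replacing the inner equal-weight estimate (Proposition~\ref{equalweight}) by a double analogue. First, reduce to the mean-zero case $\int_X\phi\,d\mu=\int_X\psi\,d\mu=0$. Expanding $\phi=\phi_0+c_\phi$ and $\psi=\psi_0+c_\psi$ in $I^{(i,j)}_{f,\phi,\psi}$, the $c_\phi c_\psi$ term reproduces the main term of the proposition, and the two cross terms decay at rates $e^{-\delta_1 i\min\ba}$ and $e^{-\delta_1 j\min\ba}$ by Theorem~\ref{Effective}, both dominated by $e^{-\delta_1\min(i,j-i)\min\ba}$ since $j\geq i\geq\min(i,j-i)$.

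Next, I would establish the equal-weight double analogue: for $\be=(n,\ldots,n,m,\ldots,m)$, mean-zero $\phi,\psi$, $i\leq j$, and $f\in C_c^\infty(H)$ with support in $B_H(r)$ such that $\pi_{x_k}$ is injective on $B_G(2r)$ for $k=1,2$,
\[
\bigl|I^{(i,j)}_{f,\phi,\psi}(g_\be,x_1,x_2)\bigr|\leq E\,S_{d^2}(f)S_{d^2}(\phi)S_{d^2}(\psi)\,e^{-\delta_0(j-i)\min\be}.
\]
Following Proposition~\ref{equalweight}, thicken $f$ to $\widetilde F=f^-f^0 f\in C_c^\infty(G)$ using $K^{(l)}$-bumps on $H^-$ and $H^0$; since $g_\be$ centralizes $H^0$ and contracts $H^-$, the thickening is absorbed by the $K^{(l)}$-invariance of $\phi,\psi$, giving $I=\int_G \widetilde F(g)\phi(g_\be^igx_1)\psi(g_\be^jgx_2)\,d\lambda_G$. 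After the substitution $g\mapsto g_\be^{-i}g$ and a routine base-point bookkeeping to accommodate $x_2$, descent to $L^2(X)$ recasts this as a matrix coefficient $\langle A,\psi\circ g_\be^{j-i}\rangle_{L^2(X)}$ with $S_{d^2}(A)\ll S_{d^2}(\widetilde F)S_{d^2}(\phi)$; applying Proposition~\ref{Exp mixing} with $\int\psi\,d\mu=0$ yields the displayed rate, which dominates $e^{-\delta_0\min(i,j-i)\min\be}$.

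Third, reduce $\ba$ to $\be$ by extending the partition argument of Theorem~\ref{Effective} to two orbits. Write $\psi(g_\ba^jhx_2)=\psi(g_\ba^{j-i}g_\ba^ihx_2)$ and decompose $g_\ba^{j-i}=g_{\bs\ell^*}g_\bv$ with $\ell^*\asymp(j-i)/(2(m+n))$ and $\bv:=\ba(j-i)-\bs\ell^*\in\mathfrak{a}^+$ satisfying $\min\bv\geq(j-i)\min\ba/2$. With $\epsilon=(2/c_0)^{1/d}q^{-\kappa\min(i,j-i)/d}$, partition $\widetilde B\subset H$ into the bad set $V_1^\epsilon$, on which at least one of $g_\bv g_\ba^ihx_1$, $g_\bv g_\ba^ihx_2$ leaves $X_\epsilon$, and its complement $V_2^\epsilon$. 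Corollary~\ref{Nondivergence} applied to both orbits bounds $\lambda_H(V_1^\epsilon)\ll\epsilon^{1/(mn(m+n))}\lambda_H(B)$, handled by the trivial estimate $\|f\|_\infty\|\phi\|_\infty\|\psi\|_\infty\lambda_H(V_1^\epsilon)$. On $V_2^\epsilon$, insert the normalized bump $\chi$ on $B_H(q^{-\kappa\min(i,j-i)})$ and substitute $h\mapsto k_yh$ with $k_y=(g_\bv g_\ba^i)^{-1}y(g_\bv g_\ba^i)\in H$. Because $g_\bv$ and $g_\ba^i$ are commuting diagonal matrices, this substitution brings the $\psi$-factor into the equal-weight form $\psi(g_{\bs\ell^*}yg_\bv g_\ba^ihx_2)$ required by Step~2, while the $\phi$-factor is perturbed only by a small element of $H$ shifting its argument within the injectivity ball. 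Invoking Step~2 at base points $g_\bv g_\ba^ihx_k\in X_\epsilon$ produces the decay $e^{-\delta_0\ell^*\min\bs}\asymp e^{-\delta'(j-i)\min\ba}$.

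The main obstacle is the simultaneous alignment of the substitution $k_y$ so that both $\phi$ and $\psi$ acquire the form required by Step~2 through a single substitution; this hinges on the commutativity of the diagonal matrices $g_\bv,g_\ba^i$ in $G$, together with the observation that the perturbation of the $\phi$-factor is absorbable into its Sobolev norm. Balancing $\kappa$ so that the mixing rate $e^{-\delta_0\ell^*\min\bs}$ dominates the Sobolev blow-ups $S_{d^2}(\chi)\asymp q^{(m^2+mn+n^2)\kappa\min(i,j-i)}$ and the thickening factor $q^{(mn+1)\kappa\min(i,j-i)}$, exactly as in the closing of Theorem~\ref{Effective}'s proof, delivers a $\delta_2>0$ for which the proposition holds.
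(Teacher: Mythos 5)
Your proposal takes a genuinely different route from the paper. The paper does not rerun the partition/thickening/mixing machinery at all: it inserts a smooth bump $q\in C_c^\infty(H)$ supported in $K^{(l)}\cap H$, performs the change of variables $h\mapsto k_yh$ with $k_y=g_\ba^{-i}yg_\ba^i$, and uses the invariance of $f$ under $H_f$ and of $\phi$ under $K_\phi$ (together with $g_\ba^ik_y=yg_\ba^i$, $y\in K_\phi$) to kill the $y$-dependence of $f(k_yh)$ and $\phi(g_\ba^ik_yhx_1)$. This leaves $\int_H q(y)\psi(g_\ba^{j-i}yg_\ba^ihx_2)\,dy$, to which Theorem~\ref{Effective} is applied as a black box with base point $g_\ba^ihx_2\in X_\epsilon$; a second application of Theorem~\ref{Effective} to $\int_Hf(h)\phi(g_\ba^ihx_1)\,dh$ finishes. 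In other words, the proof is a two-fold invocation of the already-proved single equidistribution, not a fresh mixing argument.

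The main problem in your version lies in Step 2, the ``equal-weight double analogue.'' You wish to recast $\int_G\widetilde F(g_\be^{-i}g)\phi(gx_1)\psi(g_\be^{j-i}gx_2)\,d\lambda_G(g)$ as a matrix coefficient $\langle A,\psi\circ g_\be^{j-i}\rangle_{L^2(X)}$. Even granting that the injectivity of $\pi_{x_2}$ allows you to define $A(gx_2)=\widetilde F(g_\be^{-i}g)\phi(gx_1)$ as a function on $X$, the Sobolev norm $S_{d^2}(A)$ is not simply $\ll S_{d^2}(\widetilde F)S_{d^2}(\phi)$: the factor $\widetilde F(g_\be^{-i}\cdot)$ is invariant under conjugates $g_\be^iK^{(l)}g_\be^{-i}$, which are \emph{expanded} in the $H^-$-direction, so $S_{d^2}(A)$ grows like a power of $q^{(m+n)i}$. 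This swamps the mixing gain $e^{-\delta_0(j-i)\min\be}$ when $i$ is large relative to $j-i$, so the bound you state in Step 2 does not hold as written, and ``routine base-point bookkeeping'' is not in fact routine. The gap is, however, avoidable by your own observation in Step 3: after the substitution $h\mapsto k_yh$ with $k_y=(g_\bv g_\ba^i)^{-1}y(g_\bv g_\ba^i)$, the $\phi$-factor becomes $\phi(g_\bv^{-1}yg_\bv\cdot g_\ba^ihx_1)$, where $g_\bv^{-1}yg_\bv$ lies well inside $K_\phi$ for $i$ large, so $\phi$ is \emph{exactly} constant in $y$ and simply pulls out with $\|\phi\|_\infty$. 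The inner $y$-integral is then handled by the single-variable Proposition~\ref{equalweight}, and no double analogue is needed. If you restructure along these lines (or better, follow the paper's approach and invoke Theorem~\ref{Effective} directly twice), the proof closes; as written, Step 2 is both unnecessary and contains an uncontrolled Sobolev blowup.
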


\begin{proof}  It is enough to show the statement for sufficiently large $i,j$ and continuous characteristic functions $f,\phi$ and $\psi$.  Choose compact open subgroups $H_f$ of $H$ and $K_\phi$ of $G$ which leave $f$ and $\phi$ invariant, respectively. By taking $l$ large enough, we may assume that the $l$-th congruence subgroup of $K$ satisfies $K^{(l)}\cap H\subset H_f$ and $K^{(l)}\subset K_\phi$. Let $q\in C_c^\infty(H)$ be such that $q\ge 0,\int _Hq(y)dy=1$ and $\supp(q)\subset K^{(l)}\cap H$. Similar to the proof of Theorem~\ref{Effective}, choose $i_0$ large enought so that $k_y:=g_\ba^{-i}yg_\ba^i \in H_f$ for all $i \geq i_0$.
Making a change of variable $h\mapsto k_y h$ gives us 
\begin{align*}
I_{f,\phi,\psi}^{(i,j)}&=\int_H \int_H f(k_y h)\phi(g_\ba^ik_yhx_1)\psi(g_\ba^j k_y hx_2)q(y)dhdy,\\
&=\int_H \int_H f(h)\phi(g_\ba^ihx_1)\psi(g_\ba^j k_yhx_2)q(y)dhdy
\end{align*}
where we used  $K_\phi$-invariance of $\phi$ and the fact that $y\in \supp (q) \subset K_\phi$.

Let $E=\{h\in H_f\,|\, g_\ba^ihx_2\in X_\epsilon\}$. Then for $i \geq N(H_f, L)/\min \ba $, by Corollary~\ref{Nondivergence}, $\lambda_H(H_f\backslash E)\ll \epsilon^\alpha\lambda_H(H_f)$ for $\alpha=\frac{1}{mn(m+n)}$. 
Hence 
$$\left|\int_{H}\int_{H\backslash E} f(h)\phi (g_\ba^ihx_1)\psi(g_\ba^jy_1hx_2)q(y)dhdy\right|\le c\epsilon^\alpha\lambda_H(H_f)\|f\|_\infty\|\phi\|_\infty\|\psi\|_\infty$$ for some $c>0$.

 Meanwhile, for $h\in E$, there exists $C_1=C_1(f,\phi, X_\epsilon)$ such that
\begin{align*}
& \left|\int_H \psi(g_\ba^jk_y hx_2)q(y)dy- \int_X\psi(x_2)d\mu\right| \\
= & \left|\int_H \psi(g_\ba^{j-i}yg_\ba^{i}hx_2)q(y)dy- \int_X\psi(x_2)d\mu\right|
\le C_1e^{-\delta_1 (j-i)\min \ba}
\end{align*}
by applying Theorem~\ref{Effective} with $g_\ba^ihx_2$ in place of $x$, and $X_\epsilon$ in place of $L$.
Finally,
\begin{align*}
&\left|\int_H f(h)\phi(g_\ba^ihx_1)dh-\int_H f\int_X\phi\right|\le C_1e^{-\delta_1i\min \ba}.
\end{align*}
Therefore, for some $r\in\mathbb{N}$, we get
\begin{align*}
\left|I_{f,\phi,\psi}^{(i,j)}(g_{\ba},x_1,x_2)-\int_H f d\lambda_H(h)\int_{X}\phi d\mu\int_{X}\psi d\mu\right|\ll
e^{-\delta_1(j-i)\min \ba}+e^{-\delta_1i\min\ba}
\end{align*}
with an implied constant depending only on the degree-$r$ Sobolev norm of $f$,$\phi$ and $\psi$. This completes the proof of the Proposition.
\end{proof}
\begin{lem}\label{lemma} Let $j\ge i$. Given a probability space $(Y,\nu)$, suppose that $F\colon Y\times\mathbb{Z}_{\ge 0}\to\mathbb{R}$ satisfies $$\left|\int_Y F(y,i)F(y,j)d\nu\right|\le Cq^{-\delta\min (i,j-i)}.$$ Then we have
$$\int_Y\left(\sum_{i=b}^{c-1} F(y,i)\right)^2d\nu(y)\le \frac{4C(c-b)}{1-q^{-\delta}}.$$
\end{lem}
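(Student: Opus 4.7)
The natural approach is to expand the square and apply the correlation hypothesis to each cross-term, then bound the resulting double sum by a two-regime geometric series estimate. To begin, I would write
\[
\int_Y\Bigl(\sum_{i=b}^{c-1}F(y,i)\Bigr)^{2} d\nu(y) \;=\; \sum_{i,j=b}^{c-1}\int_Y F(y,i)F(y,j)\,d\nu(y).
\]
Since the integrand is symmetric in $(i,j)$ and the diagonal terms $\int_Y F(y,i)^{2}d\nu$ are nonnegative, the identity $\sum_{i,j}=2\sum_{i\le j}-\sum_{i=j}$ immediately gives
\[
\int_Y\Bigl(\sum_{i=b}^{c-1}F(y,i)\Bigr)^{2} d\nu(y) \;\le\; 2\sum_{b\le i\le j\le c-1}\Bigl|\int_Y F(y,i)F(y,j)\,d\nu\Bigr| \;\le\; 2C\sum_{b\le i\le j\le c-1} q^{-\delta\min(i,\,j-i)},
\]
where the last step uses the standing hypothesis.

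Next I would control the double sum by substituting $m=j-i$ and splitting the inner sum at $m=i$. For $0\le m\le i$ one has $\min(i,m)=m$, so that portion is dominated by the full geometric series $\sum_{m\ge 0}q^{-\delta m}=1/(1-q^{-\delta})$, uniformly in $i$. For $i< m\le c-1-i$ one has $\min(i,m)=i$, contributing at most $(c-1-2i)_{+}\,q^{-\delta i}\le(c-b)\,q^{-\delta i}$. Summing over $i$ from $b$ to $c-1$, the first regime contributes at most $(c-b)/(1-q^{-\delta})$ and the second at most $(c-b)\sum_{i\ge b}q^{-\delta i}\le(c-b)/(1-q^{-\delta})$. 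Adding these and multiplying through by $2C$ yields $4C(c-b)/(1-q^{-\delta})$, as claimed.

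I do not anticipate any substantive obstacle: the argument is pure bookkeeping once the square is expanded. The only point that requires a moment of care, compared with a standard Gal--Koksma-style almost-orthogonality bound for stationary correlations $q^{-\delta|i-j|}$, is that here the decay rate $q^{-\delta\min(i,j-i)}$ depends asymmetrically on both the smaller index $i$ and the gap $j-i$. This asymmetry is exactly why the inner sum must be split according to whether $m\le i$ or $m>i$; without that split one would lose the crucial linear-in-$(c-b)$ behaviour and pick up a spurious quadratic factor.
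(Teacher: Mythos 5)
Your proof is correct and takes essentially the same route as the paper: expand the square, symmetrize to a sum over $i\le j$ at the cost of a factor $2$, then sum the correlation bound. The only cosmetic difference is how the $\min$ is handled — you partition the inner sum according to whether $j-i\le i$ or $j-i>i$, while the paper simply bounds $q^{-\delta\min(i,j-i)}\le q^{-\delta(j-i)}+q^{-\delta i}$ and lets the two geometric sums decouple; both yield the same constant $4C(c-b)/(1-q^{-\delta})$.
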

\begin{proof} The proof is verbatim the proof of Lemma 3.2 in \cite{ksw16}. The following inequalities
\begin{align*}
\int_Y &\left(\sum_{i=b}^{c-1} F(y,i)\right)^2d\nu(y)=\sum_{i=b}^{c-1}\sum_{j=b}^{c-1}\int_YF(y,i)F(y,j)d\nu(y)\\
&\le2\sum_{i=b}^{c-1}\sum_{j=i}^{c-1}\left|\int_Y F(y,n)F(y,m)d\nu(y)\right|
\le2C\sum_{i=b}^{c-1}\sum_{j=i}^{c-1}\left(q^{-\delta(j-i)}+q^{-\delta i}\right)\\
&\le2C\sum_{i=b}^{c-1}\left(\frac{1}{1-q^{-\delta}}+q^{-\delta i}(c-i)\right) 
\le\frac{4C(c-b)}{1-q^{-\delta}}
\end{align*} give the conclusion.
\end{proof}

In particular, let $L_s$ be the set of intervals of the form $[2^kl,2^k(l+1))\cap\mathbb{Z}_{\ge 0}$ where $2^k(l+1)<2^s$. Then 
\begin{align*}
\displaybreak[0] \sum_{[b,c)\in L_s}\int_Y\left(\sum_{i=b}^{c-1} F(y,i)\right)^2d\nu(y)&\le\sum_{k=0}^{s-1}\sum_{l=0}^{2^{s-k}-1}\int_Y\left(\sum_{i=2^kl}^{2^k(l+1)-1}F(y,i)\right)^2d\nu(y)\\
\displaybreak[0] &\le\sum_{k=0}^{s-1}\sum_{l=0}^{2^{s-k}-1}\frac{4C2^k}{1-q^{-\delta}}
=\frac{4Cs2^s}{1-q^{-\delta}}.
\end{align*} 

For any given $\epsilon>0$, let $$Y_s=\left\{y\in Y\left|\right.\sum_{I\in L_s}\left(\sum_{n\in I}F(y,i)\right)^2>2^ss^{2+2\epsilon}\right\}.$$
Then $$\nu(Y_s)\le\frac{4Cs^{-(1+2\epsilon)}}{1-q^{-\delta}}$$ and moreover $$\sum_{i=0}^{r-1}F(y,i)\le 2^{s/2}s^{3/2+\epsilon}$$ for $y\notin Y_s$ and for all $1\le r\le2^s$. Borel-Cantelli lemma implies the following corollary. 
\begin{cor}\label{corp} With the notations as above, we have a measurable subset $Z_\epsilon$ of $Y$ with full measure such that for every $y\in Z_\epsilon$ there exists $s_y\in\mathbb{N}$ such that $y\notin Y_s$ whenever $s\ge s_y$.
\end{cor}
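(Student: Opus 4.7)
The plan is to obtain Corollary \ref{corp} as a direct application of the (first) Borel--Cantelli lemma to the sequence $\{Y_s\}_{s\ge 1}$. The measure estimate
\[
\nu(Y_s)\le\frac{4Cs^{-(1+2\epsilon)}}{1-q^{-\delta}}
\]
has already been derived in the preceding display (via Chebyshev's inequality applied to the second-moment bound from Lemma \ref{lemma}). Since $\epsilon>0$, the series $\sum_{s\ge 1} s^{-(1+2\epsilon)}$ converges, and hence
\[
\sum_{s\ge 1}\nu(Y_s)<\infty.
\]

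By Borel--Cantelli this yields $\nu\bigl(\limsup_{s\to\infty}Y_s\bigr)=0$, where as usual $\limsup_s Y_s=\bigcap_{N\ge 1}\bigcup_{s\ge N}Y_s$. I would then simply set
\[
Z_\epsilon\;:=\;Y\;\setminus\;\limsup_{s\to\infty}Y_s,
\]
which is measurable and has $\nu(Z_\epsilon)=1$. Unwinding the definition of $\limsup$, a point $y$ lies in $Z_\epsilon$ precisely when $y$ belongs to only finitely many $Y_s$, and this is exactly the statement that there exists $s_y\in\mathbb{N}$ such that $y\notin Y_s$ for every $s\ge s_y$, yielding the conclusion.

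I do not expect any substantive obstacle in this step: the real work was carried out earlier, where Lemma \ref{lemma} (together with the Chebyshev-type inequality $\nu(Y_s)\le 2^{-s}s^{-2-2\epsilon}\cdot\mathbb{E}\bigl[\sum_{I\in L_s}(\sum_{n\in I}F(\,\cdot\,,i))^2\bigr]$) was used to produce the summable bound on $\nu(Y_s)$. The corollary itself is just the standard conversion of that summable bound into the pointwise a.e.\ statement, and no further estimate of $F$ is required.
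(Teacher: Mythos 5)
Your proposal is correct and coincides with the paper's argument: the paper simply states ``Borel--Cantelli lemma implies the following corollary'' after establishing the summable bound $\nu(Y_s)\ll s^{-(1+2\epsilon)}$, and your write-up is exactly that application (with $Z_\epsilon$ the complement of $\limsup_s Y_s$) spelled out in full.
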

\begin{thm}[Pointwise equidistribution]\label{pointwise}
For any given $x\in X$, $\phi\in C^\infty_c(X)$ and $\epsilon>0$, we have
$$\frac{1}{N}\sum_{n=0}^{N-1}\phi(g_\ba^nhx)=\int_X\phi d\mu+O(N^{-1/2}(\log N)^{\frac{3}{2}+\epsilon})$$
for almost every $h\in H$.
\end{thm}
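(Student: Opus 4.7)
The plan is to apply the Chebyshev/Borel-Cantelli machinery already developed in Lemma~\ref{lemma} and Corollary~\ref{corp} with the double equidistribution Proposition~\ref{Double} serving as the required second-moment input.

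First I would reduce to the mean-zero case by replacing $\phi$ with $\phi-\int_X\phi\,d\mu$, so that we may assume $\int_X\phi\,d\mu=0$ and the goal becomes to show
$$\sum_{n=0}^{N-1}\phi(g_\ba^nhx)=O\bigl(N^{1/2}(\log N)^{3/2+\epsilon}\bigr)$$
for $\lambda_H$-a.e.\ $h\in H$. Since $\lambda_H$ is $\sigma$-finite and the exceptional set is preserved under countable unions, it suffices to prove this for $\lambda_H$-a.e.\ $h$ in an arbitrary fixed ball $B\subset H$ centered at $e$. Fix such a $B$, choose $f\in C_c^\infty(H)$ with $f\ge0$, $f\equiv 1$ on $B$, and let $L\subset X$ be a compact set containing $\overline{(\supp f)\cdot x}$, which exists since $hx$ depends continuously on $h$.

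Next, I would apply Proposition~\ref{Double} with $\psi=\phi$ and $x_1=x_2=x\in L$. With $\int_X\phi\,d\mu=0$, the right-hand side collapses to a pure correlation bound:
$$\Bigl|\int_H f(h)\,\phi(g_\ba^ihx)\,\phi(g_\ba^jhx)\,d\lambda_H(h)\Bigr|\le C_2 e^{-\delta_2\min(i,j-i)\min\ba}$$
whenever $i\le j$. Setting $F(h,i):=\phi(g_\ba^ihx)$ and letting $\nu$ be the probability measure $f\,d\lambda_H/\int_H f\,d\lambda_H$ on $Y:=\supp f$, this becomes
$$\Bigl|\int_Y F(y,i)F(y,j)\,d\nu(y)\Bigr|\le C q^{-\delta\min(i,j-i)}$$
with $\delta=\delta_2(\min\ba)/\log q$ and $C=C_2/\int_H f\,d\lambda_H$. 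This is exactly the hypothesis of Lemma~\ref{lemma}.

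Now I would feed this into the dyadic argument preceding Corollary~\ref{corp}. For each $s\in\mathbb{N}$ the partition $L_s$ of $[0,2^s)$ into dyadic intervals gives, by Lemma~\ref{lemma} and Borel--Cantelli, a $\nu$-conull set $Z_\epsilon\subset Y$ on which, for every $h\in Z_\epsilon$, there is $s_h$ with
$$\Bigl|\sum_{i=0}^{r-1}F(h,i)\Bigr|\le 2^{s/2}s^{3/2+\epsilon}\qquad\text{for all }1\le r\le 2^s,\ s\ge s_h.$$
Since $f\equiv1$ on $B$, $\nu$ and $\lambda_H$ restricted to $B$ are proportional, so $Z_\epsilon\cap B$ is $\lambda_H$-conull in $B$. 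Given $N$, choose $s$ with $2^{s-1}\le N<2^s$; the bound above yields $|\sum_{i=0}^{N-1}\phi(g_\ba^ihx)|\ll N^{1/2}(\log N)^{3/2+\epsilon}$, which is the desired conclusion for $h\in Z_\epsilon\cap B$. A countable exhaustion of $H$ by balls and a diagonal intersection over a sequence $\epsilon_k\downarrow 0$ finish the argument.

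The main technical obstacle is not the Borel--Cantelli bookkeeping (which mirrors \cite{ksw16}) but ensuring that Proposition~\ref{Double} is applicable uniformly in $i,j$: this requires that the base point $x$ remain in a fixed compact set, which is why the localization to a ball $B$ (with $L=\overline{B\cdot x}$) is essential. The second delicate point is the mean-zero reduction, which must be done \emph{before} invoking Proposition~\ref{Double}, since the proposition's conclusion absorbs the product $\int\phi\int\psi$ only up to the claimed error.
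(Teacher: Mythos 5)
Your argument is correct and follows essentially the same route as the paper's own proof: both rely on the double equidistribution Proposition~\ref{Double} as the second-moment input, feed it into Lemma~\ref{lemma} and the dyadic Borel--Cantelli machinery of Corollary~\ref{corp}, and read off the $N^{1/2}(\log N)^{3/2+\epsilon}$ error. Your write-up is in fact more careful than the paper's terse version (which just references \cite{ksw16}): you make explicit the localization to a ball $B$ with a bump function $f\equiv 1$ on $B$ and a compact set $L\supset(\supp f)\cdot x$ so that Proposition~\ref{Double} applies uniformly, you perform the mean-zero reduction before invoking it rather than expanding cross terms afterward, and you spell out why the $\nu$-conull conclusion transfers to a $\lambda_H$-conull one on $B$ together with the countable exhaustion and $\epsilon_k\downarrow 0$ diagonal intersection. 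These are exactly the bookkeeping details the paper leaves implicit, and none of them is a genuine deviation from the paper's proof.
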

\begin{proof} The proof is similar with Lemma 3.4 and Lemma 3.5 in \cite{ksw16}.
Given $x\in X$, $\phi\in C_c(X)$ and $\epsilon>0$, let $\nu$ be the probability measure on $X$ defined by $$\int\psi d\nu=\int_Hf(h)\psi(hx)d\lambda_H(h).$$
Let $\alpha=\int_X \varphi d\mu$. Then there exist $C>0$ and $\delta>0$ such that $$\left|\int_X (\varphi(g_\ba^nx)-\alpha)(\varphi(g_\ba^nx)-\alpha)d\nu\right|\le Cq^{-\delta\min(n,m-n)}$$ holds for every $m\ge n\ge 0$. Applying Corollary~\ref{corp} to $F(x,n)=\phi(g_\ba^nx)-\alpha$, we obtain a full measure subset $Z_\epsilon$ of $Y$ with the following property. For every $y\in Z_\epsilon$, we have
\begin{align*}
\displaybreak[0] \left|\sum_{n=0}^{N-1} F(y,n)\right|&\le 2^{s/2}s^{3/2+\epsilon}\le (2N)^{1/2}\log^{3/2+\epsilon}(2N)
\end{align*}
when $N\ge 2^{s_y-1}$.
\end{proof}


\section{Diophantine approximation with weights}
As mentioned in the introduction, a quantitative Khintchine-Groshev type theorem over a field of formal series is obtained in \cite{dkl05} as follows. Let $V=\{q^{-n}\colon n\in \mathbb{N}\}$ and $\phi\colon \mathbb{R}^+\to V$. For $\bp\in \bK^n$ and $\bq\in \bK^m$, consider the inequalities
\begin{equation}\label{mda}\|\bq A-\bp\|_\infty<\phi(\|\bq\|_\infty), \quad 1 \leq \| \bq \|_\infty \le q^T.\end{equation}
 Let $\epsilon>0$ be arbitrary and let 
$$\Phi(T)=(q^m-1)\sum_{r=0}^{T} q^{rm}\phi(q^r)^n.$$
The number of solutions $(\bp,\bq) \in \bZ^m \times \bZ^n$ satisfying (\ref{mda}) is
$$\Phi(T)+O\left(\Phi(T)^{1/2}\log^{3/2+\epsilon}(\Phi(T))\right)$$ for $\lambda^{mn}$-almost every $A\in M$.

We consider the weighted quasi-norms
$$\|\bx\|_\alpha=\max_{1\le i\le m}|x_i|^{1/a_i}\qquad\textrm{and}\qquad\|\by\|_\beta=\max_{m+1\le j\le d}|y_j|^{1/a_j}.$$

Define the set
$$E_{T,R}=\left\{(\bx,\by)\in\bK^m\times\bK^n\colon \|\bx\|_\alpha<\frac{q^R}{\|\by\|_\beta},1\le\|\by\|_\beta\le q^T\right\}$$
whose measure $\lambda^{m+n}(E_{T,R})$ is given by 
\begin{align*}\sum_{k\in\mathbb{Q},0\le k\le T}\lambda^m\left(\{\|\bx\|_\alpha<q^{R-k}\}\right)\lambda^n\left(\{\|\by\|_\beta=q^k\}\right).\end{align*} 
Note that
\begin{align*}
 \lambda^n \left(\{\|\by\|_\beta=q^k\}\right) &= \lambda^n \left(\{\|\by\|_\beta\ \leq q^k\}\right) - \lambda\left(\{\|\by\|_\beta < q^k\}\right)\\
 & = \lambda^n \{ \by : \forall j, | y_j | ^{1/a_j} \leq q^k \} - \lambda^n \{ \by : \forall j, | y_j | ^{1/a_j} < q^k \}\\
 &=q^{\lfloor ka_{m+1}\rfloor+\cdots+\lfloor ka_{m+n}\rfloor}-q^{\lceil ka_{m+1}\rceil+\cdots+\lceil ka_{m+n}\rceil-n}
 \end{align*}
Remark that this term is zero if $ka_i$ are not integers for all $i$. 
 Similarly, 
$$ \lambda^m \left(\{\|\bx\|_\alpha<q^{R-k}\}\right) = q^{\lceil(R-k)a_1\rceil+\cdots+\lceil(R-k)a_m\rceil-m}.$$
Thus $\lambda^{m+n}(E_{T,R})$ is equal to 
\begin{align*}
\sum_{k\in\mathbb{Q},0\le k\le T}\left(q^{\lceil(R-k)a_1\rceil+\cdots+\lceil(R-k)a_m\rceil-m}\right)\left(q^{\lfloor ka_{m+1}\rfloor+\cdots+\lfloor ka_{m+n}\rfloor}-q^{\lceil ka_{m+1}\rceil+\cdots+\lceil ka_{m+n}\rceil-n}\right).
\end{align*}

The above sum is in fact a sum over rational numbers of the form $l/a_j$ for some integer $l$ and for some $m+1\le j\le m+n$, thus it is a finite sum and it is well-defined.

We further remark that 
$ E_{t, R} - E_{t-1, R} = E_{1,R}$ for any $t \in \mathbb{N}$, thus it also satisfies
the homogeneity with respect to positive integers $$\lambda(E_{T,R})=T\lambda(E_{1,R}).$$

There is a one-to-one correspondence between the set of nonzero solutions of
\begin{equation}\label{daw}\|A\bq-\bp\|_\alpha<\frac{q^R}{\|\bq\|_\beta},\qquad 1\le\|\bq\|_\beta\le q^T\end{equation} and the intersection of $u_A\Gamma$ with the set $E_{T,R}$. Modifying the argument of \cite{dkl05}, we can compute the number $N_R(T,A)$ of solutions satisfying (\ref{daw}).
\begin{thm}\label{counting} Let $\Psi_R(T)=\lambda^{m+n}(E_{T,R})$. Then, we have $$N_R(T,A)=\Psi_R(T)+O\left(\Psi_R(T)^{\frac{1}{2}}\log^{2+\epsilon}(\Psi_R(T))\right)$$ for $\lambda^{mn}$-almost every $A\in \textrm{Mat}_{m \times n}(\bK)$. 
\end{thm}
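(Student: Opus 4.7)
The plan is to interpret $N_R(T,A)$ as a count of non-zero points of the shifted lattice $u_A\bZ^d$ in the target set $E_{T,R}$, and then apply a variance/second-moment method in the style of Dodson--Kristensen--Levesley \cite{dkl05}, adapted to the weighted quasi-norms.

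First, using the bijection $(\bp,\bq)\leftrightarrow (A\bq-\bp,\bq)$ noted just before the theorem, I would rewrite
\[N_R(T,A)=\sum_{(\bp,\bq)\in\bZ^d\setminus\{0\}}\chi_{E_{T,R}}(A\bq-\bp,\bq)=\sum_{t=1}^{T}N^{(t)}_R(A),\]
where $N^{(t)}_R$ counts solutions with $q^{t-1}\le\|\bq\|_\beta\le q^{t}$. This converts the counting problem into estimating a sum of essentially independent slice contributions, matching the homogeneity $\Psi_R(T)=T\Psi_R(1)$ already observed in the text.

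Second, I would compute the first and second moments over $A$ in a fundamental domain for $\textrm{Mat}_{m\times n}(\cO)\backslash\textrm{Mat}_{m\times n}(\bK)$. Since $A\mapsto A\bq\bmod\bZ^m$ is Haar-distributed on $\cO^m$ for each fixed $\bq\ne 0$, Fubini gives $\int N_R(T,A)\,d\lambda^{mn}(A)=\Psi_R(T)$. For the variance, I would expand the square $(N_R(T,A)-\Psi_R(T))^{2}$ into pair contributions and exploit pairwise decorrelation: for $\bK$-linearly independent $\bq_1,\bq_2$ the joint distribution of $(A\bq_1,A\bq_2)\bmod\bZ^{2m}$ is product Haar, so these cross terms produce exactly the product of individual expectations and cancel $\Psi_R(T)^{2}$. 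What remains is the contribution from diagonal and $\bK$-linearly dependent pairs, which I expect to yield
\[\int (N_R(T,A)-\Psi_R(T))^{2}\,d\lambda^{mn}(A)\ll \Psi_R(T)\log^{O(1)}\Psi_R(T).\]

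Third, with this variance in hand, I would apply an Erd\H{o}s--G\'al-type Borel--Cantelli argument (in the spirit of Lemma~\ref{lemma} and Corollary~\ref{corp}, but adapted to growing variances via dyadic decomposition of the range $t\in[1,T]$) to conclude that, almost surely in $A$, one has $N_R(T_k,A)-\Psi_R(T_k)\ll \Psi_R(T_k)^{1/2}\log^{2+\epsilon}\Psi_R(T_k)$ along a subsequence $T_k$ chosen so that $\Psi_R(T_k)$ grows geometrically. Since both $N_R(T,A)$ and $\Psi_R(T)$ are non-decreasing in $T$, a standard sandwich between consecutive $T_k$'s then promotes the estimate to all $T$.

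The hardest step will be the variance bound, specifically controlling the contribution from $\bK$-linearly dependent but distinct integer pairs $(\bq_1,\bq_2)$, where the joint Haar estimate degenerates and one must count near-proportional lattice pairs. In the unweighted case of \cite{dkl05}, the slices of $E_{T,R}$ are isotropic boxes and these contributions can be handled by elementary lattice counting. For the weighted case, each slice is an anisotropic product of coordinate balls at scales $q^{t a_i}$, so summing contributions across coordinates with differing scalings requires more delicate bookkeeping; this is the source of the additional logarithmic factor in the error (i.e.\ $\log^{2+\epsilon}$ here versus $\log^{3/2+\epsilon}$ in the unweighted case).
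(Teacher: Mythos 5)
Your proposal is essentially the same as the paper's argument: interpret $N_R(T,A)$ as $\sum_{\bq}\chi_{B_\bq}(A)$, compute first and second moments using the pairwise decorrelation of $\bK$-linearly independent $\bq,\bq'$ (Propositions 3 and 4 of \cite{dkl05}, adapted to the weighted quasi-norms), bound the contribution of linearly dependent pairs by a divisor count, and convert the variance estimate into an almost-everywhere asymptotic with error rate. The paper does not rederive the Erd\H{o}s--G\'al/dyadic step but simply invokes Sprindzuk's Lemma 10 from \cite{sp79}, which packages exactly your Borel--Cantelli argument; the one piece you leave as ``I expect'' is the key quantitative input, namely that the variance sum $S(T)=\sum_\bq \tau_\bq$ satisfies $S(T)\ll\Psi_R(T)\log\Psi_R(T)$, and it is precisely this extra $\log$ (established in the paper by an explicit divisor-sum computation over $d\mid\gcd(q_1,\ldots,q_n)$) that, fed through Sprindzuk's $B^{1/2}\log^{3/2+\epsilon}B$ bound, produces the exponent $2+\epsilon$ rather than the $3/2+\epsilon$ of the unweighted case.
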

\begin{proof} 
Given a vector $\bq\in\bK^m$, let us define
$$B_{\bq}=\left\{A\in \textrm{Mat}_{m\times n}(\mathcal{O}) \colon \underset{\bp\in Z^m}{\inf}\|A\bq-\bp\|_\alpha<\frac{q^R}{\|\bq\|_\beta}\right\}.$$
Modifying the proof of \cite{dkl05} Proposition 3, for $\|\bq\|_\beta=q^k$ we have $$\lambda^{mn}(B_\bq)=q^{\lceil(R-k)a_1\rceil+\cdots+\lceil(R-k)a_m\rceil-m}.$$
Moreover, by Proposition 4 of \emph{loc. cit.} it holds that $$\lambda^{mn}(B_\bq\cap B_{\bq'})=\lambda^{mn}(B_{\bq})\lambda^{mn}(B_{\bq'})$$ for linearly independent vectors $\bq,\bq'\in \bZ^n$. Let $d(\bq)$ be the number of common divisors in $\bZ$ of the coordinates of $\bq $ and let $$\tau_\bq=q^{\lceil(R-k)a_1\rceil+\cdots+\lceil(R-k)a_m\rceil-m}d(\bq),$$ for $\|\bq\|_\beta=q^k$. Since we only get contributions from the elements corresponding to the pairs of parallel vectors $\bq$ and $\bq'$, we have
$$\int\left(\sum_{q^s<\|\bq\|_\beta\le q^t}\chi_{B_\bq}(A)-\sum_{q^s<\|\bq\|_\beta\le q^t}\lambda^{mn}(B_\bq)\right)^2 dA\ll \sum_{q^s<\|\bq\|_\beta\le q^t}\tau_\bq$$ for $s<t$.
The Lemma 10 in (\cite{sp79}) says that given a measure space $(\Xi,\omega)$, if a sequence of nonnegative $\omega$-measurable function $\{f_k\}$ and two sequences of nonnegative real numbers $\{a_k\}$ and $\{b_k\}$ satisfy $0\le a_k\le b_k\le 1$ and 
$$\int_\Xi \left(\sum_{i<k\le j}f_k(x)-\sum_{i<k\le j}a_k\right)^2d\omega\le C\sum_{m<k\le n}b_k$$
for every pair of integers $(i,j)$ with $i<j$, then for $\omega$-almost every $x$
$$\sum_{1\le k\le n}f_k(x)=\sum_{1\le k\le n}a_k+O(B^{\frac{1}{2}}(n)\log^{\frac{3}{2}+\epsilon}B(n))$$
where $B(n)=\sum_{1\le k\le n}b_k$.

It follows that we have for $\lambda^{mn}$-almost every $A$,
\begin{align*}N_R(T,A)&=\sum_{1\le\|\bq\|_\beta\le q^T}\chi_{B_\bq}(A) \\&=\sum_{1\le\|\bq\|_\beta\le q^T}\lambda^{mn}(B_\bq)+O(S(T)^{1/2}\log^{3/2+\epsilon}S(T))
\end{align*}
for $S(T)=\sum_{1\le\|\bq\|_\beta\le q^T}\tau_\bq$. We immediately obtain 
\begin{align*}
&\sum_{1\le\|\bq\|_\beta\le q^T}\lambda^{mn}(B_\bq)=\sum_{k\in\mathbb{Q},0\le k<T}\#\{\|\bq\|_\beta=q^k\}q^{\lceil(R-k)a_1\rceil+\cdots+\lceil(R-k)a_m\rceil-m}\\
=&\sum_{k\in\mathbb{Q},0\le k\le T}(q^{\lfloor ka_{m+1}\rfloor+\cdots+\lfloor ka_{m+n}\rfloor}-q^{\lceil ka_{m+1}\rceil+\cdots+\lceil ka_{m+n}\rceil-n})q^{\lceil(R-k)a_1\rceil+\cdots+\lceil(R-k)a_m\rceil-m}\\
=&\lambda^{m+n}(E_{T,R}).
\end{align*}
It remains to show that $S(T)=O(\Psi_R(T)\log \Psi_R(T))$. In fact, using the Dirichlet series of the number of monic divisors (\cite{rosen}, page 17), we have

\begin{align*}
\displaybreak[0] S(T)&=\sum_{1\le\|\bq\|_\beta\le q^T}\chi_{B_\bq}\sum_{d|(q_1,\cdots,q_n)}1\\
\displaybreak[0] &\ll\sum_{k\in\mathbb{Q},0\le k\le T}\sum_{\l=0}^{\lfloor k\rfloor}\underset{|GCD(q_1,\cdots,q_n)|=|L|=q^l}{\sum_{\|\bq\|_\beta=q^k}}\lambda^{mn}(B_\bq)\sum_{d|L}1\\
\displaybreak[0] &\ll\sum_{k\in\mathbb{Q},0\le k\le T}q^{\lceil(R-k)a_1\rceil+\cdots+\lceil(R-k)a_m\rceil-m}\sum_{l=0}^{\lfloor k\rfloor}\sum_{\|\bv\|_\beta=q^{k-l}}(l+1)q^l\\
\displaybreak[0] &\ll\Psi_R(T)\log\Psi_R(T).
\end{align*}
This completes the proof.
\end{proof} 
Switching the roles of $\bx$ and $\by$, we define
$$F_{S,R}=\left\{(\bx,\by)\in\bK^m\times\bK^n\colon\|\bx\|_\alpha<\frac{q^R}{\|\by\|_\beta},1\le\|\bx\|_\alpha\le q^S\right\}.$$
\begin{lem} 
We have $\lambda(E_{S,R})=\lambda(F_{S,R})$.
\end{lem}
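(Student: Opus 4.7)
The plan is to exhibit a measure-preserving $\mathbb{Z}$-action on $\bK^m \times \bK^n$ that interchanges the roles of $\|\bx\|_\alpha$ and $\|\by\|_\beta$, and to realize both $E_{S,R}$ and $F_{S,R}$ as $S$-fold unions of translates of a fundamental domain. The constraint $\sum_{i=1}^m a_i = \sum_{j=1}^n a_{m+j}$ defining $\mathfrak{a}^+$ is what makes this action volume-preserving, and it is the essential input.

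Concretely, define
\[
\phi(x_1, \ldots, x_m, y_1, \ldots, y_n) := (t^{a_1} x_1, \ldots, t^{a_m} x_m, t^{-a_{m+1}} y_1, \ldots, t^{-a_{m+n}} y_n).
\]
The Jacobian of $\phi$ equals $q^{\sum_i a_i - \sum_j a_{m+j}} = 1$, so $\phi$ preserves $\lambda^{m+n}$. A direct check gives $\|\phi(\bx)\|_\alpha = q \|\bx\|_\alpha$ and $\|\phi(\by)\|_\beta = q^{-1} \|\by\|_\beta$, so the product $\|\bx\|_\alpha \|\by\|_\beta$ is invariant, and $\phi$ restricts to a measure-preserving bijection of the hyperbolic region $\Omega_R := \{\|\bx\|_\alpha \|\by\|_\beta < q^R\}$.

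Partition $E_{S,R}$ by the integer part of $\log_q \|\by\|_\beta$: for $\gamma = 0, 1, \ldots, S-1$, the slice $\Omega_R \cap \{q^\gamma \leq \|\by\|_\beta < q^{\gamma+1}\}$ coincides with $\phi^{-\gamma}(\tilde{E})$, where $\tilde{E} := \Omega_R \cap \{1 \leq \|\by\|_\beta < q\}$; the measure-preservation of $\phi$ then yields $\lambda(E_{S,R}) = S \cdot \lambda(\tilde{E})$. Symmetrically, $\lambda(F_{S,R}) = S \cdot \lambda(\tilde{F})$ with $\tilde{F} := \Omega_R \cap \{1 \leq \|\bx\|_\alpha < q\}$, so the problem reduces to showing $\lambda(\tilde{E}) = \lambda(\tilde{F})$.

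For this last step, both $\tilde{E}$ and $\tilde{F}$ are strict fundamental domains for the $\phi^{\mathbb{Z}}$-action on $\Omega_R$ away from a measure-zero locus, so the map $\Psi : \tilde{E} \to \tilde{F}$ that sends $(\bx, \by)$ to $\phi^{k(\bx)}(\bx, \by)$, where $k(\bx)$ is the unique integer with $q^{k(\bx)} \|\bx\|_\alpha \in [1, q)$, is a bijection. Since $k(\bx)$ depends only on $\|\bx\|_\alpha$, which takes values in a countable discrete set, $k$ is locally constant and $\Psi$ agrees locally with a single iterate of the measure-preserving map $\phi$; integrating over the (countably many) level sets of $\|\bx\|_\alpha$ then gives $\lambda(\tilde{E}) = \lambda(\tilde{F})$. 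The main obstacle is a bookkeeping one at the endpoints, namely reconciling the closed-interval convention ``$\|\by\|_\beta \leq q^S$'' (respectively ``$\|\bx\|_\alpha \leq q^S$'') in the definition with the half-open decomposition used above; this is handled by the discrete-valuation structure of $\bK$, which makes the relevant boundary slice itself a $\phi$-translate of a base shell and hence symmetric between the two sides.
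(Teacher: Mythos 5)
Your argument takes a more dynamical route than the paper's. The paper first invokes the already-established homogeneity $\lambda(E_{S,R}) = \tfrac{S}{R}\lambda(E_{R,R})$ (and its analogue for $F$) to reduce to the case $S = R$, then observes that $E_{R,R}$ and $F_{R,R}$ agree on the region where $\|\bx\|_\alpha \ge 1$ and $\|\by\|_\beta \ge 1$ and differ only on the two ``wings'' $\{\|\bx\|_\alpha < 1\}\times\{1 \le \|\by\|_\beta < q^R\}$ and $\{1 \le \|\bx\|_\alpha < q^R\}\times\{\|\by\|_\beta < 1\}$, whose measures coincide because $\sum_{i\le m} a_i = \sum_{j>m}a_j$ (both come out to $q^{R\sum a_i - m - n} - q^{-m-n}$; the exponent ``$-1$'' printed in the paper is a typo). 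Your proof instead builds the measure-preserving map $\phi$ — which is nothing but $g_\ba$ acting on $\bK^{m+n}$ — realizes $E_{S,R}$ and $F_{S,R}$ as disjoint unions of $S$ translates of $\tilde E$ and $\tilde F$, and identifies both $\tilde E$ and $\tilde F$ as fundamental domains for the $\phi^{\mathbb{Z}}$-action on $\Omega_R$. This is genuinely different and conceptually cleaner: it avoids the box-volume bookkeeping and makes transparent why the constraint $\sum_{i\le m} a_i = \sum_{j>m}a_j$ enters (it is exactly unimodularity of $\phi$). Both proofs implicitly require the half-open convention $1 \le \|\by\|_\beta < q^T$ (resp.\ $1 \le \|\bx\|_\alpha < q^S$) rather than the ``$\le q^T$'' written in the paper's definition.

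Your closing sentence about the endpoints, however, does not hold up. Writing $A := \sum_{i\le m}a_i = \sum_{j>m}a_j$, the boundary slice $\{\|\by\|_\beta = q^S,\ \|\bx\|_\alpha < q^{R-S}\}$ has measure $(1-q^{-n})\,q^{RA-m}$, while $\{\|\bx\|_\alpha = q^S,\ \|\by\|_\beta < q^{R-S}\}$ has measure $(1-q^{-m})\,q^{RA-n}$; these disagree whenever $m \ne n$, so the slice is not symmetric between the two sides, nor is it a $\phi$-translate of anything living on the $F$ side (the equality constraint sits on the wrong coordinate block for $\phi$ to move it over). The discrete-valuation structure of $\bK$ does not absorb this discrepancy. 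The actual resolution is simply that the upper bounds in the definitions of $E_{T,R}$ and $F_{S,R}$ must be read as strict; this is also what is needed for the paper's stated homogeneity $\lambda(E_{T,R}) = T\lambda(E_{1,R})$ and for its own measure computation to be correct. Once you delete that sentence and work with the half-open sets throughout, your proof is complete.
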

\begin{proof} Since $\lambda(E_{S,R})=\frac{S}{R}\lambda(E_{R,R})$ and $\lambda(F_{S,R})=\frac{S}{R}\lambda(F_{R,R})$, it suffices to show that $\lambda(E_{R,R})=\lambda(F_{R,R})$, equivalent to that $\lambda\{(\bx,\by)\in\bK^m\times\bK^n\colon \|\bx\|_\alpha\le q^R,\|\by\|_\beta< 1\}=\lambda\{(\bx,\by)\in\bK^m\times\bK^n\colon \|\bx\|_\alpha<1,\|\by\|_\beta\le q^R\}$. Since both of these sets have measure $q^{R(a_1+\cdots +a_m)-1}$, we get the conclusion.
\end{proof}

%

For $f\colon \bK^d\to\mathbb{R}$, let us define a function $\widehat{f}$ on $X$ by
$$\widehat{f}(\Lambda)=\sum_{\bv\in \Lambda-\{\mathbf{0}\}}f(\bv).$$

\begin{prop}\label{E and F} For $\lambda^{mn}$-almost every $A$, we have
$$\lim_{N\to\infty}\frac{1}{N}\sum_{n=0}^{N-1}\widehat{\chi}_{E_{T,R}}(g_\ba^nu_A\Gamma)=\lambda(E_{T,R})$$
and
$$\lim_{N\to\infty}\frac{1}{N}\sum_{n=0}^{N-1}\widehat{\chi}_{F_{S,R}}(g_\ba^nu_A\Gamma)=\lambda(F_{S,R}).$$
\end{prop}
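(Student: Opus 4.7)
The plan is to rewrite each Birkhoff average as a count of $u_A\bZ^d$-lattice vectors in a shifted target region, then invoke Theorem~\ref{counting} (for the $E$-statement) or a parallel shell-count variance estimate (for the $F$-statement) and telescope. Every $v\in u_A\bZ^d\setminus\{0\}$ has the form $(A\bq-\bp,\bq)$ with $(\bp,\bq)\in\bZ^m\times\bZ^n\setminus\{0\}$. Since $\|g_\ba^n\bx\|_\alpha=q^n\|\bx\|_\alpha$ and $\|g_\ba^n\by\|_\beta=q^{-n}\|\by\|_\beta$, the product $\|A\bq-\bp\|_\alpha\|\bq\|_\beta$ is $g_\ba$-invariant, so
\begin{align*}
g_\ba^n v\in E_{T,R}&\iff\|A\bq-\bp\|_\alpha\|\bq\|_\beta<q^R,\ q^n\le\|\bq\|_\beta\le q^{n+T},\\
g_\ba^n v\in F_{S,R}&\iff\|A\bq-\bp\|_\alpha\|\bq\|_\beta<q^R,\ q^{-n}\le\|A\bq-\bp\|_\alpha\le q^{S-n}.
\end{align*}

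For the $E$-statement, the first equivalence identifies $\widehat{\chi}_{E_{T,R}}(g_\ba^nu_A\Gamma)=N_R(n+T,A)-N_R(n,A)$ up to negligible boundary (since $\|\cdot\|_\beta$ takes only discretely spaced values). Theorem~\ref{counting} writes $N_R(k,A)=\Psi_R(k)+\epsilon(k,A)$ with $|\epsilon(k,A)|\ll\Psi_R(k)^{1/2}\log^{2+\vre}\Psi_R(k)$ for a.e.~$A$; the additive homogeneity $\Psi_R(n+T)-\Psi_R(n)=\Psi_R(T)=\lambda(E_{T,R})$ produces the constant main term, and summation over $0\le n\le N-1$ telescopes the error into $\sum_{k=N}^{N+T-1}\epsilon(k,A)-\sum_{k=0}^{T-1}\epsilon(k,A)=O_A(T\sqrt{N}\log^{2+\vre}N)=o(N)$, yielding the first limit after dividing by $N$.

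For the $F$-statement, the second equivalence counts $(\bp,\bq)$ in a shell of constant log-length $S$ in the $\|A\bq-\bp\|_\alpha$-direction; by $g_\ba$-invariance of the Haar measure together with the Lemma preceding Proposition~\ref{E and F}, this shell has constant expected volume $\lambda(F_{S,R})$ regardless of $n$. I plan to establish the uniform-in-$n$ variance estimate
\[
\int_{\mathrm{Mat}_{m\times n}(\mathcal{O})}\bigl(\widehat{\chi}_{F_{S,R}}(g_\ba^nu_A\Gamma)-\lambda(F_{S,R})\bigr)^2\,dA\;\ll\;\lambda(F_{S,R})\log\lambda(F_{S,R})
\]
by rerunning the variance-plus-Sprind\v{z}uk argument behind Theorem~\ref{counting}: fixing $\bq\in\bZ^n\setminus\{0\}$, the Haar measure of $B'_\bq=\{A:\exists\,\bp,\ \|A\bq-\bp\|_\alpha\in[q^{-n},q^{S-n}]\text{ and }\|A\bq-\bp\|_\alpha\|\bq\|_\beta<q^R\}$ is computable via the formulas of \cite{dkl05}, and quasi-orthogonality $\lambda^{mn}(B'_\bq\cap B'_{\bq'})=\lambda^{mn}(B'_\bq)\lambda^{mn}(B'_{\bq'})$ for linearly independent $\bq,\bq'$ carries over from Propositions~3--4 of \emph{loc.~cit}. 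Sprind\v{z}uk's lemma then gives $\widehat{\chi}_{F_{S,R}}(g_\ba^nu_A\Gamma)=\lambda(F_{S,R})+O_A(\sqrt{\lambda(F_{S,R})}\log^{3/2+\vre}\lambda(F_{S,R}))$ uniformly in $n$ for a.e.~$A$, and the Birkhoff average converges to $\lambda(F_{S,R})$.

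The main obstacle is the uniform-in-$n$ variance estimate for the shell count. The \cite{dkl05} scheme transfers because $g_\ba$ preserves Haar measure and the quasi-orthogonality argument is combinatorial in $\bq,\bq'$, but the bookkeeping demands care with two items. First, the $\bq=0$ stratum contributes only a bounded, $A$-independent count of $\bp$'s (which even vanishes once $n$ exceeds $S$) and is absorbed under $1/N$ normalization. Second, the explicit computation of $\lambda^{mn}(B'_\bq)$ in the shifted regime splits according to whether the shell cap $q^{S-n}$ or the Diophantine bound $q^R/\|\bq\|_\beta$ is binding, each piece being a direct adaptation of the formulas in \cite{dkl05}. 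Boundary contributions at $\|A\bq-\bp\|_\alpha=q^{-n},q^{S-n}$ vanish thanks to the discrete spectrum of the weighted norm.
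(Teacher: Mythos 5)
For the $E$-statement your strategy (writing each $\widehat{\chi}_{E_{T,R}}(g_\ba^n u_A\Gamma)$ via $g_\ba$-invariance of $\|\cdot\|_\alpha\|\cdot\|_\beta$ as a count over a shifted window in $\|\bq\|_\beta$, then invoking Theorem~\ref{counting} and telescoping) is essentially the paper's sandwich argument and does work. One caveat: the identity $\widehat{\chi}_{E_{T,R}}(g_\ba^n u_A\Gamma)=N_R(n+T,A)-N_R(n,A)$ drops exactly the stratum $\|\bq\|_\beta=q^n$ (since $E_{T,R}$ includes $\|\by\|_\beta\ge 1$ while $N_R(n+T)-N_R(n)$ only sees $\|\bq\|_\beta>q^n$), and in $\bK$ that stratum has positive Haar measure, so it is not a negligible boundary; correspondingly $\Psi_R(n+T)-\Psi_R(n)$ equals $\Psi_R(T)$ minus the measure of the $\|\by\|_\beta=1$ slice of $E_{T,R}$. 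The two omissions cancel, so the limit you compute is correct, but ``negligible boundary'' and ``additive homogeneity'' are each, taken literally, false, and a careful write-up should account for the boundary layer explicitly (as the paper's two-sided inequality $\#[\Lambda\cap(E_{N,R}\setminus E_{T,R})]\le \frac{1}{T}\sum_n\widehat{\chi}_{E_{T,R}}(g_\ba^n\Lambda)\le \#[\Lambda\cap E_{N+T,R}]$ implicitly does).

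The genuine gap is in the $F$-statement. You want to pass from a per-$n$ variance bound
$\int(\widehat{\chi}_{F_{S,R}}(g_\ba^n u_A\Gamma)-\lambda(F_{S,R}))^2\,dA\ll\lambda(F_{S,R})\log\lambda(F_{S,R})$
to a pointwise bound $\widehat{\chi}_{F_{S,R}}(g_\ba^n u_A\Gamma)=\lambda(F_{S,R})+O_A(\cdot)$ holding uniformly in $n$ for a.e.\ $A$, ``by Sprind\v{z}uk's lemma.'' But the lemma (Lemma~10 of~\cite{sp79}, as quoted before Theorem~\ref{counting}) is a statement about \emph{partial sums} of a family $\{f_k\}$ from variance bounds on those partial sums; it does not turn a single-variable variance bound into a pointwise bound, let alone one uniform in an external parameter $n$. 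Your variance estimate is constant in $n$, so Chebyshev gives $\lambda^{mn}\{A:|\widehat{\chi}_{F_{S,R}}(g_\ba^n u_A\Gamma)-\lambda(F_{S,R})|>M\}\ll M^{-2}$ with no decay in $n$, Borel--Cantelli yields nothing, and running Sprind\v{z}uk separately over $\bq$ for each fixed $n$ only produces an $O_{A,n}(\cdot)$ whose implied constant may blow up with $n$. A variance route would instead require controlling the partial sums $\sum_{n=i}^{j-1}(\widehat{\chi}_{F_{S,R}}(g_\ba^n u_A\Gamma)-\lambda(F_{S,R}))$, i.e.\ cross-covariances between the shells $\|A\bq-\bp\|_\alpha\in[q^{-n},q^{S-n}]$ at different $n$; these shells overlap for adjacent $n$, so the quasi-orthogonality of~\cite{dkl05} (which is over linearly independent $\bq,\bq'$ at a fixed scale) does not carry over without new work. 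The paper sidesteps all of this with a soft squeeze: the upper bound follows from the already-established $E$-limit via $\frac{1}{S}\sum_{n=0}^{N-1}\widehat{\chi}_{F_{S,R}}(g_\ba^n u_A\Gamma)\le \#[u_A\Gamma\cap E_{N+R,R}]+\#[F\cap u_A\Gamma]$ with $F$ compact, and the lower bound comes from Birkhoff's theorem (on a countable dense family in $C_c(X)$) applied to continuous compactly supported minorants $\varphi_2\le\widehat{\chi}_{F_{S,R}}$ with $\int\varphi_2\ge\lambda(F_{S,R})-2\epsilon$. You should replace your $F$-argument with that two-sided squeeze.
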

\begin{proof} It holds that 
\begin{align*}
\#\left[\Lambda\cap(E_{N,R}\backslash E_{T,R})\right]&\le\frac{1}{T}\sum_{n=0}^{N-1}\widehat{\chi}_{E_{T,R}}(g_\ba^n\Lambda)\\
&\le\#\left[\Lambda\cap(E_{N+T,R})\right]
\end{align*}
for all lattices $\Lambda$. Now
\begin{align*}
\frac{T}{N}\#\left[u_A\Gamma\cap(E_{N,R}\backslash E_{T,R})\right]&\le\frac{1}{N}\sum_{n=0}^{N-1}\widehat{\chi}_{E_{T,R}}(g_\ba^nu_A\Gamma)\\
\displaybreak[0] &\le\frac{T}{N}\#[u_A\Gamma\cap E_{N+T,R}]
\end{align*}
together with Theorem \ref{counting} implies the first equation. 
For the second equality, if we let $F=\{(\bx,\by)\in \bK^m\times\bK^n\colon \|\bx\|_\alpha\le q^S, \|\by\|_\beta\le q^R\}$, then
$$\frac{1}{S}\sum_{n=0}^{N-1}\widehat{\chi}_{F_{S,R}}(g_\ba^nu_A\Gamma)\le \#[u_A\Gamma\cap(E_{N+R,R})]+\#[F\cap u_A\Gamma].$$
Therefore, $\lim_{N\to\infty}\frac{1}{N}\sum_{n=0}^{N-1}\widehat{\chi}_{F_{S,R}}(g_\ba^nu_A\Gamma)\le\lambda(E_{S,R})=\lambda(F_{S,R})$. Conversely, there is a full measure subset $M'$ of $M$ for which $$\lim_{N\to\infty}\frac{1}{N}\sum_{n=0}^{N-1}\varphi(g_\ba^nu_A\Gamma)=\int_X\varphi d\mu$$
holds for all $\varphi\in C_c(X)$ and $A\in M'$. For any $\epsilon>0$, there exists $\varphi_1\in C_c(\bK^d)$ such that $\chi_{F_{S,R}}(\bv)\ge\varphi_1(\bv)$ and $\int_{\bK^d}\varphi_1>\lambda(F_{S,R})-\epsilon$. Moreover, there exists $\varphi_2\in C_c(X)$ such that $\varphi_2(\Lambda)\le \widehat{\varphi_1}(\Lambda)\le\widehat{\chi_{F_{S,R}}}(\Lambda)$ and
$$\int_X\varphi_2 d\mu\ge \lambda(F_{S,R})-2\epsilon.$$
Since $\epsilon$ was arbitrary, for almost every $A\in M$ we have
$$\underset{N\to\infty}{\textrm{liminf}}\frac{1}{N}\sum_{n=0}^{N-1}\widehat{\chi_{F_{S,R}}}(g_\ba^nu_A\Gamma)\ge\lambda(F_{S,R})$$
which completes the proof.
\end{proof}


\section{Equidistribution with respect to $C_\alpha(X)$ and counting with directions}

For a generalized topological space $E$ equipped with a measure $\lambda$ on which a unimodular group $G$ acts transitively and preserving $\lambda$, the mean value theorem of Siegel \cite{si45} has been generalized in \cite{mo96}. In our special case when $E=\bK^d$, $G=SL(d,\bK)$ and $\Gamma=SL(d,\bZ)$, we have
$$\int_{\bK^d}f(\bx)d\lambda=\int_{X}\widehat{f}(\Lambda)d\mu.$$

In order to control the rate of growth at infinity of unbounded functions on $X$, let us use the following notation introduced by \cite{emm98}: For a unimodular lattice $\Lambda\in X$ and a subgroup $\Delta\le\Lambda$ with $L=\bK\Delta$, let us denote by $d(\Delta)$ the volume of $L/\Delta$ and $$\alpha(\Lambda)=\max\{d(\Delta)^{-1}\colon \Delta\le\Lambda\}.$$

\begin{lem}\label{comparison} For any $d$ and all sufficiently large $r$, there are constants $c_1$ and $c_2$ such that if $\chi_{B_{q^r}}$ is the characteristic function of the open ball of radius $q^r$ centered at origin in $\bK^d$, then for all $\Lambda\in X$, we have
$$c_1\alpha(\Lambda)\le\widehat{\chi}_{B_{q^r}}(\Lambda)\le c_2\alpha(\Lambda).$$
\end{lem}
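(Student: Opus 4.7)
The approach is to reduce both $\widehat{\chi}_{B_{q^r}}(\Lambda)$ and $\alpha(\Lambda)$ to explicit expressions in the successive minima of $\Lambda$, and then compare. This works cleanly because the non-Archimedean structure of $\bK$ allows us to make $\Lambda$ genuinely diagonal. By the Cartan decomposition $G = KAK$ for $K = SL(d,\mathcal{O})$ and $A$ the diagonal torus, and since $K$ preserves $\|\cdot\|_\infty$, I may assume $\Lambda = \bigoplus_{i=1}^d \bZ\cdot q^{m_i}\vec{e}_i$ with $m_1 \le \cdots \le m_d$ and $\sum_i m_i = 0$. Then $\mu_i := q^{m_i}$ are the successive minima of $\Lambda$; write $\pi_k := \mu_1\cdots\mu_k$, so $\pi_d = 1$.

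Since $\Lambda$ splits orthogonally in the ultrametric sense, $\|\sum_i c_i q^{m_i}\vec{e}_i\|_\infty = \max_i|c_i|q^{m_i}$, and therefore
$$
\widehat{\chi}_{B_{q^r}}(\Lambda) + 1 \;=\; \prod_{i=1}^d \#\{c\in\bZ : |c|q^{m_i} < q^r\} \;=\; \prod_{i=1}^d \max(1,\, q^{r-m_i}).
$$
Letting $k^{*} := \#\{i : m_i < r\}$, this product equals $q^{rk^{*}}/\pi_{k^{*}}$, and a short monotonicity check using the ordering $m_1 \le \cdots \le m_d$ shows $q^{rk^{*}}/\pi_{k^{*}} = \max_{0\le j\le d} q^{rj}/\pi_j$.

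Next I will establish $\alpha(\Lambda) \asymp \max_{0\le k\le d}\pi_k^{-1}$. The lower bound $\alpha(\Lambda) \ge \pi_k^{-1}$ is immediate on taking $\Delta = \bigoplus_{i \le k}\bZ\cdot q^{m_i}\vec{e}_i$, whose covolume in its $\bK$-span is exactly $\pi_k$. The reverse inequality, that every primitive rank-$k$ submodule $\Delta \le \Lambda$ satisfies $d(\Delta) \gtrsim \pi_k$, is a function-field analog of Minkowski's second theorem: since $\Delta$ is free over $\bZ$ it admits its own successive minima $\nu_1\le\cdots\le\nu_k$, and a $k$-dimensional subspace comparison forces $\nu_i \ge \mu_i$; the non-Archimedean reduced basis of $\Delta$ then yields $d(\Delta) = \nu_1\cdots\nu_k \ge \pi_k$. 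I will invoke this as a standard fact. Note that the maximum for $\alpha$ can be restricted to primitive $\Delta$ without loss of generality, since enlarging $\Delta$ within its $\bK$-span only decreases covolume.

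Combining, the inequality $q^{rj}/\pi_j \ge \pi_j^{-1}$ (valid for $r \ge 0$) gives
$$
\widehat{\chi}_{B_{q^r}}(\Lambda) + 1 \;=\; \max_j q^{rj}/\pi_j \;\ge\; \max_k \pi_k^{-1} \;\asymp\; \alpha(\Lambda),
$$
which yields the lower bound $\widehat{\chi}_{B_{q^r}}(\Lambda) \ge c_1\alpha(\Lambda)$ as soon as $r$ is large enough to ensure $\widehat{\chi}_{B_{q^r}}(\Lambda) \ge 1$ uniformly; this is automatic for $r \ge 1$, since unimodularity forces $\mu_1 \le 1 < q^r$. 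Conversely, $q^{rj}/\pi_j \le q^{rd}\pi_j^{-1}$ produces $\widehat{\chi}_{B_{q^r}}(\Lambda) \le q^{rd}\max_k\pi_k^{-1} \lesssim q^{rd}\alpha(\Lambda)$, so $c_2 = O(q^{rd})$ suffices. The main technical ingredient is the function-field Minkowski bound in Step 3; the remainder is a direct diagonal-lattice computation whose cleanness is a pleasant consequence of the ultrametric property of $\bK$.
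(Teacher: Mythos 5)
Your proof is correct and follows essentially the same route as the paper: reduce to a diagonal lattice via $SL(d,\mathcal{O})$-invariance, compute both $\widehat{\chi}_{B_{q^r}}(\Lambda)$ and $\alpha(\Lambda)$ explicitly in terms of the exponents, and compare. One imprecision worth fixing: the Cartan decomposition $G=KAK$ by itself does not let you diagonalize $\Lambda = g\bZ^d$, since writing $g = k_1 a k_2$ with $k_1,k_2\in K$ and applying $k_1^{-1}$ gives $a k_2 \bZ^d$, and $k_2\notin\Gamma$ in general; what the reduction really uses is that every $\bZ$-lattice in $\bK^d$ admits a split (adapted) basis — equivalently $G = K A \Gamma$ — which is the non-Archimedean Smith normal form, a fact the paper also invokes tacitly. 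Your spelling out of $\alpha(\Lambda)=\max_k\pi_k^{-1}$ via the non-Archimedean Minkowski second theorem (where it holds with equality, so $\asymp$ can be strengthened to $=$) usefully justifies a formula the paper writes down without proof; also, $q^{m_i}$ should be $t^{m_i}$ in the description of the diagonal lattice, since the basis vectors live in $\bK^d$ rather than $\mathbb{R}^d$.
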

\begin{proof} Since $\widehat{\chi}_{B_{q^r}}$ and $\alpha$ is invariant under the left action of the group $SL(d,\mathcal{O})$, 
we may assume that $\Lambda=t^{a_1}\bZ e_1+\cdots +t^{a_d}\bZ e_d$ with $$a_1\le a_2\le\cdots\le a_k\le 0<a_{k+1}\le\cdots\le a_j\le r<a_{j+1}\le\cdots\le a_d$$
and
$$\sum_{i=1}^d a_i=0.$$
In this case, $\widehat{\chi}_{B_{q^r}}=\#(B_{q^r}\cap\Lambda)=q^{jr-(a_1+\cdots+a_j)}$ and $$\alpha(\Lambda)=q^{-a_1-\cdots-a_k}=q^{a_{k+1}+\cdots+a_d}.$$
Therefore, it follows that
$$q^{-dr}\alpha(\Lambda)\le\widehat{\chi}_{B_{q^r}}\le q^{dr}\alpha(\Lambda).$$
\end{proof}

\begin{lem}\label{dominate} For a given $r>0$, there exist $T,R>0$ such that
$$\widehat{\chi}_{B_{q^r}}\le\widehat{\chi}_{E_{T,R}}+\widehat{\chi}_{F_{T,R}}$$
\end{lem}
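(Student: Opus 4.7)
The plan is to establish a pointwise inequality $\chi_{B_{q^r}}(\bv) \le \chi_{E_{T,R}}(\bv) + \chi_{F_{T,R}}(\bv)$ on nonzero $\bv \in \bK^d$ for a suitable choice of $T,R$, after which summing over $\Lambda\smallsetminus\{0\}$ for any unimodular lattice $\Lambda$ yields the stated inequality. I take $T := r$ and $R := 2r+1$. Since $a_i \ge 1$ for every $i$, any $\bv = (\bx,\by) \in B_{q^r}$ satisfies $\|\bx\|_\alpha \le q^{r-1}$ and $\|\by\|_\beta \le q^{r-1}$, so $\|\bx\|_\alpha \cdot \|\by\|_\beta < q^{2r} < q^R$.

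A two-branch case analysis handles every $\bv \in B_{q^r}$ with $\|\bv\|_\infty \ge 1$: if $\|\by\|_\beta \ge 1$, the bounds $1 \le \|\by\|_\beta \le q^T$ together with the product estimate place $\bv \in E_{T,R}$; symmetrically, $\bv \in F_{T,R}$ whenever $\|\bx\|_\alpha \ge 1$. Since $\|\bv\|_\infty \ge 1$ forces at least one of $\|\bx\|_\alpha$ or $\|\by\|_\beta$ to be at least $1$, these cases exhaust the ``non-small'' part of $B_{q^r}$.

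The remaining range is that of nonzero ``small'' vectors $\bv$ with $\|\bv\|_\infty < 1$, where the pointwise inclusion literally fails. The crucial observation is that $\Lambda=g\bZ^d$ is stable under multiplication by $t$ (because $t\bZ \subset \bZ$): for any nonzero small $\bv \in \Lambda$ with $\|\bv\|_\infty = q^{-k}$, the rescaled vector $t^k \bv$ is again in $\Lambda$ and has norm exactly $1$, and tracking which coordinate realises the maximum gives $\|t^k \bx\|_\alpha = 1$ or $\|t^k \by\|_\beta = 1$, so that $t^k \bv \in E_{T,R} \cup F_{T,R}$. To convert this into the required lattice-sum comparison I work line-by-line: partition $\Lambda\smallsetminus\{0\}$ by the one-dimensional $\bK$-subspace $L = \bK\bv_0$ that each vector spans, write $\Lambda \cap L = \bZ\bv_0$ with $\|\bv_0\|_\infty = q^{-s}$, and check that the number of nonzero $P\bv_0$ landing in $B_{q^r}$ is bounded by the sum of those landing in $E_{T,R}$ and in $F_{T,R}$. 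The key point is that elements of $E_{T,R}\cap F_{T,R}$ are counted twice in $\widehat{\chi}_{E_{T,R}}+\widehat{\chi}_{F_{T,R}}$, and this surplus of size $q^s-1$ is exactly what absorbs the small vectors along $L$.

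The main obstacle is this line-by-line bookkeeping. The weighted norms $\|\cdot\|_\alpha$ and $\|\cdot\|_\beta$ do not rescale uniformly under multiplication by a polynomial $P \in \bZ$, so the comparison must be broken into subcases according to how the norm of $\bv_0$ is distributed between its $\bx_0$- and $\by_0$-components and how that distribution interacts with the weights $a_i$. A bounded enlargement of $R$ (and correspondingly $T$) absorbs the resulting slack, and an elementary estimate of the form $(2q-1)q^{r+s} \ge (q+1)q^s - 1$ then propagates uniformly through every subcase, yielding the line-wise inequality. Summing over $L$ produces $\widehat{\chi}_{B_{q^r}}(\Lambda) \le \widehat{\chi}_{E_{T,R}}(\Lambda) + \widehat{\chi}_{F_{T,R}}(\Lambda)$, completing the proof.
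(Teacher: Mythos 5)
The paper's proof is a short translation argument, and it sidesteps the issue your sketch spends most of its effort on. Since $\Lambda$ is a $\bZ$-module, $\#\bigl(B_{q^r}\cap\Lambda\bigr)=\#\bigl((B_{q^r}+\bv)\cap\Lambda\bigr)$ for \emph{any} $\bv\in\Lambda$. Using the Minkowski-type bound $\Lambda\cap B_{q^d}\ne\emptyset$ and rescaling that vector by a power of $t$, one produces $\bv\in\Lambda$ with $\|\bv\|_\infty$ in a fixed window of the form $[q^r,q^{r+d})$; by the ultrametric inequality every point of $B_{q^r}+\bv$ then has $\|\cdot\|_\infty=\|\bv\|_\infty\ge q^r$, so the shifted ball is disjoint from the problematic region near $\mathbf{0}$ and a direct check gives $B_{q^r}+\bv\subset E_{T,R}\cup F_{T,R}$ for $T,R$ depending only on $r$ and $d$. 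The inequality of the $\widehat{\chi}$'s follows at once, with no per-line accounting.

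Your route is genuinely different, and as written it has a real gap. The first step (pointwise inclusion on $\|\bv\|_\infty\ge1$) is fine, but the second step --- comparing counts line by line over the small nonzero lattice vectors --- is the whole content of the lemma and is left unverified. You flag it yourself as ``the main obstacle''; the inequality $(2q-1)q^{r+s}\ge(q+1)q^s-1$ appears without derivation and without any indication of what it is supposed to bound; and ``a bounded enlargement of $R$ absorbs the resulting slack'' is an assertion, not an argument. Moreover, the mechanism you single out as ``the key point'' --- that double-counting $E_{T,R}\cap F_{T,R}$ supplies a surplus of exactly $q^s-1$ --- cannot be what makes the estimate go through in general: if the primitive vector $\bv_0=(\bx_0,\by_0)$ has $\by_0=\mathbf{0}$ (resp.\ $\bx_0=\mathbf{0}$) then $\bZ\bv_0$ never meets $E_{T,R}$ (resp.\ $F_{T,R}$), the intersection $E\cap F$ contributes nothing along that line, and the slack has to come entirely from the one set that is hit. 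The line-wise comparison is likely provable, but only after a case analysis over which weighted coordinate of $\bv_0$ dominates; your sketch defers exactly that, whereas the paper's one-shift argument avoids the difficulty entirely.
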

\begin{proof}
We observe that $\Lambda\cap B_{q^d}\ne\phi$ for all unimodular lattices $\Lambda$ in $\bK^d$. Moreover, for any $\bv\in \Lambda$ we have $\#[B_{q^r}\cap\Lambda]=\#[(B_{q^r}+\bv)\cap\Lambda]$. Meanwhile, there exist $T$ and $R$ such that $B_{q^r}+\bv\subset E_{T,R}\cup F_{T,R}$ and hence $\chi_{B_{q^r}+\bv}\le\chi_{E_{T,R}}+\chi_{F_{T,R}}$. Therefore $\#[B_{q^r}\cap\Lambda]\le\#[E_{T,R}\cap\Lambda]+\#[F_{T,R}\cap\Lambda]$ holds for every unimodular lattice $\Lambda$ in $\bK^d$.
\end{proof}

\begin{lem}\label{ball} Let $B_{q^r}$ be the open ball of radius $q^r>0$ centered at zero in $\bK^d$. Then
we have $$\lim_{N\to\infty}\frac{1}{N}\sum_{n=0}^{N-1}\widehat{{\chi}_{B_{q^r}}}(g_\ba^n u_A\Gamma)=\lambda(B_{q^r})$$ for $\lambda^{mn}$-almost every $A\in \textrm{Mat}_{m\times n}(\bK)$.
\end{lem}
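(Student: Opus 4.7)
The plan is to sandwich $\widehat{\chi}_{B_{q^r}}$ between compactly supported smooth functions on $X$ and use Lemma~\ref{dominate} to dominate the unbounded tail. Set $f := \widehat{\chi}_{B_{q^r}}$. For every $u$ in the congruence subgroup $K^{(1)} = (I + t^{-1}\mathrm{Mat}_d(\cO)) \cap K$ and every $v \in \bK^d\setminus\{0\}$ the ultrametric inequality yields $|uv| = |v|$; consequently $v \in B_{q^r}$ iff $uv \in B_{q^r}$, so $f$ is $K^{(1)}$-invariant and thus smooth in the sense used in the paper. Choose $T, R$ by Lemma~\ref{dominate} and set $g := \widehat{\chi}_{E_{T,R}} + \widehat{\chi}_{F_{T,R}}$, so $0 \le f \le g$ on $X$. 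Siegel's integration formula gives $\int_X f \, d\mu = \lambda(B_{q^r})$ and $\int_X g \, d\mu = \lambda(E_{T,R}) + \lambda(F_{T,R})$.

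The sets $X_{1/L} = \{\Lambda : \delta(\Lambda) \ge 1/L\}$ are compact by Mahler's criterion and (since $\delta$ is locally constant) clopen, so $\eta_L := \chi_{X_{1/L}}$ lies in $C_c^\infty(X)$ and $\eta_L \uparrow 1$ monotonically. Then $f\eta_L$ and $g\eta_L$ are in $C_c^\infty(X)$, and Theorem~\ref{pointwise} applied at $x = \Gamma$ yields, outside a null set of $A$'s, convergence of their ergodic averages to their $\mu$-integrals; Proposition~\ref{E and F} gives the same statement for $g$ itself. Taking a countable intersection over $L \in \mathbb{N}$, I fix a single $A$ on which all of these limits hold simultaneously.

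The lower bound follows from $f \eta_L \le f$: passing to $\liminf_N$ and letting $L \to \infty$, monotone convergence pushes $\int_X f \eta_L \, d\mu$ up to $\lambda(B_{q^r})$. For the upper bound I split $f = f\eta_L + f(1-\eta_L)$ and use $f(1-\eta_L) \le g(1-\eta_L) = g - g\eta_L$ to obtain
\begin{equation*}
\limsup_{N\to\infty} \frac{1}{N} \sum_{n=0}^{N-1} f(g_\ba^n u_A \Gamma) \;\le\; \int_X f\eta_L \, d\mu + \int_X g \, d\mu - \int_X g\eta_L \, d\mu .
\end{equation*}
Sending $L \to \infty$ and applying monotone convergence to $f\eta_L \uparrow f$ and $g\eta_L \uparrow g$, the right-hand side collapses to $\lambda(B_{q^r})$.

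The main subtlety is choosing a single full-measure set of $A$'s that works for every $L$ at once; this is handled by a countable intersection of null sets. The essential role of Lemma~\ref{dominate} is to supply a majorant $g$ for the unbounded function $f$ whose ergodic average is already controlled by Proposition~\ref{E and F}; without such a majorant, the contribution to $f$ from the non-compact part of $X$ (i.e.\ the complement of $X_{1/L}$) could not be closed off in the upper bound.
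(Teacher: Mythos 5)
Your proof is correct and essentially reproduces the paper's approach: the paper's one-line proof cites Corollary 5.4 of \cite{ksw16} together with Proposition~\ref{E and F} and Lemma~\ref{dominate}, and your truncation/sandwich argument is precisely what that corollary packages up. One detail you leave implicit is that $g = \widehat{\chi}_{E_{T,R}} + \widehat{\chi}_{F_{T,R}}$ is itself smooth (the sets $E_{T,R}$, $F_{T,R}$ are bounded and clopen, so $\widehat{\chi}_{E_{T,R}}$ and $\widehat{\chi}_{F_{T,R}}$ are invariant under a sufficiently deep congruence subgroup), which is needed for $g\eta_L \in C_c^\infty(X)$ and hence to invoke Theorem~\ref{pointwise} on it.
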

\begin{proof} It follows directly from Corollary 5.4 of \cite{ksw16}, Proposition \ref{E and F} and Lemma \ref{dominate}.
\end{proof}

Lemma \ref{comparison} and \ref{ball} implies the following theorem.
\begin{thm}[$(\{g_\ba^n\}_{n\ge 1}, C_\alpha(X))$-genericity] For $\lambda^{mn}$-almost every $A \in \textrm{Mat}_{m\times n}(\bK)$, we have $$\lim_{N\to\infty}\frac{1}{N}\sum_{n=0}^{N-1} f(g_\ba^n u_A\Gamma)=\int_X f d\mu.$$
\end{thm}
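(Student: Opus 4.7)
\medskip
\noindent\emph{Proof plan.} The strategy is a truncation argument in the level sets of $\alpha$: the part of $f$ where $\alpha$ is bounded is handled by Theorem~\ref{pointwise}, while the tail where $\alpha$ is large is controlled by the pointwise dominator $\widehat{\chi}_{B_{q^r}}$ supplied by Lemma~\ref{comparison} and made to equidistribute by Lemma~\ref{ball}.

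Set $X_M:=\{\Lambda\in X\colon\alpha(\Lambda)\le M\}$. Taking $\Delta$ to be the rank-one module spanned by a shortest vector of $\Lambda$ gives $\alpha(\Lambda)\ge\delta(\Lambda)^{-1}$, hence $X_M\subset\{\delta\ge 1/M\}$ is compact by Mahler's criterion. Pick a continuous cutoff $\psi_M\colon X\to[0,1]$ equal to $1$ on $X_M$ and supported in $X_{2M}$, and decompose $f=f\psi_M+f(1-\psi_M)$. The function $f\psi_M$ is bounded, compactly supported, and continuous off a $\mu$-null set, so a standard upper/lower envelope approximation by $C_c^\infty$-functions combined with Theorem~\ref{pointwise} yields, for $\lambda^{mn}$-a.e.\ $A$,
$$\lim_{N\to\infty}\frac{1}{N}\sum_{n=0}^{N-1}(f\psi_M)(g_\ba^nu_A\Gamma)=\int_X f\psi_M\,d\mu.$$

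For the tail, fix $r$ so large that Lemma~\ref{comparison} gives $\alpha\le c_1^{-1}\widehat{\chi}_{B_{q^r}}$, so that $|f(1-\psi_M)|\le(C/c_1)\widehat{\chi}_{B_{q^r}}(1-\psi_M)$. Applying the same envelope approximation to the bounded compactly supported function $\widehat{\chi}_{B_{q^r}}\psi_M$ and subtracting from Lemma~\ref{ball},
$$\lim_{N\to\infty}\frac{1}{N}\sum_{n=0}^{N-1}\widehat{\chi}_{B_{q^r}}(1-\psi_M)(g_\ba^nu_A\Gamma)=\int_X\widehat{\chi}_{B_{q^r}}(1-\psi_M)\,d\mu.$$
Combining these three displays with the trivial estimate on $\bigl|\int f(1-\psi_M)\,d\mu\bigr|$ one gets
$$\limsup_{N\to\infty}\left|\frac{1}{N}\sum_{n=0}^{N-1}f(g_\ba^nu_A\Gamma)-\int_X f\,d\mu\right|\le\frac{2C}{c_1}\int_X\widehat{\chi}_{B_{q^r}}(1-\psi_M)\,d\mu.$$
The right-hand side tends to $0$ as $M\to\infty$ by dominated convergence, since $\widehat{\chi}_{B_{q^r}}\in L^1(\mu)$ by Siegel's mean value theorem and $1-\psi_M\to 0$ pointwise (as $\alpha<\infty$ a.e.). A countable intersection over $M\in\mathbb{N}$ produces a single full-measure set of good $A$, proving the theorem.

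The main technical obstacle is the extension of Theorem~\ref{pointwise} from $C_c^\infty$ to the merely $\mu$-a.e.\ continuous, compactly supported functions $f\psi_M$ and $\widehat{\chi}_{B_{q^r}}\psi_M$; this is the usual sandwich argument, but one must ensure the $L^1$-gap between smooth envelopes controls the asymptotic difference of Birkhoff averages, which ultimately relies on the uniform upper bound $\sup_N\frac{1}{N}\sum_{n<N}\widehat{\chi}_{B_{q^r}}(g_\ba^nu_A\Gamma)<\infty$ furnished by Lemma~\ref{ball}.
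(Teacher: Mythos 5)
Your proof is correct and follows the same route the paper intends — the paper states the theorem as an immediate consequence of Lemma~\ref{comparison} and Lemma~\ref{ball} and you supply exactly the standard truncation/sandwich argument behind that one-liner. One small remark on your closing paragraph: the "uniform upper bound on Birkhoff averages of $\widehat{\chi}_{B_{q^r}}$" is not actually needed for the envelope step, because $f\psi_M$ is bounded and the pointwise sandwich $\varphi^-\le f\psi_M\le\varphi^+$ by $C_c^\infty$ functions already controls the Birkhoff averages directly (the uniform bound only enters when one tries to pass from an $L^1$-approximation without a pointwise dominator); this does not affect the validity of what you wrote.
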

\begin{thm}[Counting with directions] As $T\to\infty$,
$$\#\{u_A\Gamma\cap E_{T,R}(C_1,C_2)\}\sim \lambda(E_{T,R}(C_1,C_2)).$$
\end{thm}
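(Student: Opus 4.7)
The plan is to reduce the directed counting to the $(\{g_\ba^n\}, C_\alpha(X))$-genericity theorem proved just above, by slicing $E_{T,R}(C_1, C_2)$ into $T$ pieces along the $g_\ba$-orbit and then sandwiching the characteristic function of the fundamental slice by continuous functions.

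The first step is the observation that $g_\ba$ scales the quasi-norms, $\|g_\ba \bx\|_\alpha = q\|\bx\|_\alpha$ and $\|g_\ba \by\|_\beta = q^{-1}\|\by\|_\beta$, while by construction the projections $\pi_\alpha$ and $\pi_\beta$ are $g_\ba$-invariant (each picks out the unique representative of the $g_\ba$-orbit lying on $\bS^{m-1}$, resp.\ $\bS^{n-1}$). It follows, up to sets of $\lambda^{m+n}$-measure zero, that
\[
E_{T,R}(C_1, C_2) = \bigsqcup_{n=0}^{T-1} g_\ba^{-n}\,E_{1,R}(C_1, C_2),
\]
whence $\lambda^{m+n}(E_{T,R}(C_1, C_2)) = T\,\lambda^{m+n}(E_{1,R}(C_1, C_2))$ by unimodularity of $g_\ba$, and, after the change of variable $\bv \mapsto g_\ba^{-n}\bv$ inside each slice,
\[
\#\bigl(u_A\Gamma \cap E_{T,R}(C_1, C_2)\bigr) = \sum_{n=0}^{T-1} \widehat{\chi}_{E_{1,R}(C_1, C_2)}\bigl(g_\ba^n u_A \Gamma\bigr).
\]

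The second step handles the fact that $\chi_{E_{1,R}(C_1, C_2)}$ is not continuous. Since $E_{1,R}(C_1, C_2)$ is contained in a bounded region of $\bK^d$ (the constraint $1 \le \|\by\|_\beta \le q$ forces $\|\bx\|_\alpha < q^R$) and $\partial C_1, \partial C_2$ are null, for any $\epsilon > 0$ I would choose continuous compactly supported $\phi^{\pm}$ with $\phi^{-} \le \chi_{E_{1,R}(C_1, C_2)} \le \phi^{+}$ and $\int_{\bK^d}(\phi^{+} - \phi^{-})\,d\lambda^d < \epsilon$, all supported in a single large ball $B_{q^r}$ independent of $\epsilon$. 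The Siegel transforms $\widehat{\phi^{\pm}}$ are then continuous off a null set and, via domination $\widehat{\phi^{\pm}} \le \|\phi^\pm\|_\infty \widehat{\chi}_{B_{q^r}}$ together with Lemma~\ref{comparison}, are bounded by a constant multiple of $\alpha(\Lambda)$; hence $\widehat{\phi^{\pm}} \in C_\alpha(X)$. The $(\{g_\ba^n\}, C_\alpha(X))$-genericity theorem then yields, for $\lambda^{mn}$-almost every $A$,
\[
\lim_{T\to\infty} \frac{1}{T}\sum_{n=0}^{T-1} \widehat{\phi^{\pm}}\bigl(g_\ba^n u_A \Gamma\bigr) = \int_{\bK^d}\phi^{\pm}\,d\lambda^d.
\]

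The third step is to combine this with the sandwich $\widehat{\phi^{-}} \le \widehat{\chi}_{E_{1,R}(C_1, C_2)} \le \widehat{\phi^{+}}$, letting $\epsilon \to 0$ along a countable sequence so that the intersection of the corresponding full-measure $A$-sets remains of full measure. Together with the homogeneity established in step one, this produces the desired asymptotic $\#(u_A\Gamma \cap E_{T,R}(C_1, C_2)) \sim \lambda^{m+n}(E_{T,R}(C_1, C_2))$. The main technical obstacle is ensuring the $\alpha$-bound on $\widehat{\phi^{\pm}}$ holds uniformly across the approximating sequence so that the exceptional $A$-set does not degenerate in the limit $\epsilon \to 0$; this is resolved by keeping all $\phi^{\pm}$ inside one fixed large ball $B_{q^r}$ and applying Lemma~\ref{comparison} once and for all.
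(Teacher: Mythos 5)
Your overall strategy — slice $E_{T,R}(C_1,C_2)$ along the $g_\ba$-orbit into pieces, rewrite the count as a Birkhoff sum of a Siegel transform, and apply the $(\{g_\ba^n\},C_\alpha(X))$-genericity theorem — is the same as the paper's. Two points deserve attention, one of which is a genuine gap.

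The gap is in the slicing step. You claim that, up to $\lambda^{m+n}$-measure zero, $E_{T,R}(C_1,C_2)=\bigsqcup_{n=0}^{T-1}g_\ba^{-n}E_{1,R}(C_1,C_2)$, and hence that $\#(u_A\Gamma\cap E_{T,R}(C_1,C_2))=\sum_{n=0}^{T-1}\widehat{\chi}_{E_{1,R}(C_1,C_2)}(g_\ba^n u_A\Gamma)$ exactly. This would be fine over $\mathbb{R}$, where spheres are Lebesgue-null, but over $\bK$ the ``sphere'' $\{\by:\|\by\|_\beta=q^k\}$ for integer $k$ is a clopen set of \emph{positive} Haar measure (indeed $\lambda^n(\{\|\by\|_\beta=q^k\})=q^{\lfloor ka_{m+1}\rfloor+\cdots+\lfloor ka_{m+n}\rfloor}-q^{\lceil ka_{m+1}\rceil+\cdots+\lceil ka_{m+n}\rceil-n}>0$). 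Since each slice $g_\ba^{-n}E_{1,R}(C_1,C_2)$ carries the two-sided closed constraint $q^n\le\|\by\|_\beta\le q^{n+1}$, consecutive slices overlap exactly on one of these spheres, and the overlap captures a positive proportion of lattice points. Consequently the right-hand sum overcounts by $\Theta(T)$, which is of the same order as the quantity being estimated, so the claimed exact equality fails and the asymptotic you derive from it is off by a constant factor. The fix is either to use half-open slices $\{1<\|\by\|_\beta\le q\}$, whose $g_\ba$-translates genuinely tile $\{1<\|\by\|_\beta\le q^T\}$ and miss only the single bounded piece $\{\|\by\|_\beta=1\}$ contributing $O(1)$ points, or to replace the equality by the two-sided sandwich inequalities used in the paper's Proposition~\ref{E and F}, which the paper's proof invokes implicitly via the ``$\sim$'' in its first line.

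The other difference is not a flaw but is a detour you do not actually need. You sandwich $\chi_{E_{1,R}(C_1,C_2)}$ between continuous compactly supported $\phi^\pm$ and apply the genericity theorem to $\widehat{\phi^\pm}$. The paper instead applies the genericity theorem directly to $\widehat{\chi}_{E_{r,R}(C_1,C_2)}$: because the norm constraints cut out a clopen (hence boundary-free) region and the boundary contribution from $\pi_\alpha^{-1}(\partial C_1)\cup\pi_\beta^{-1}(\partial C_2)$ is $\lambda$-null by hypothesis, the Siegel transform of the indicator is already continuous off a $\mu$-null set, and Lemma~\ref{comparison} gives the $\alpha$-bound, so it lies in $C_\alpha(X)$. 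Your extra approximation layer is harmless (and your care about taking a countable $\epsilon$-sequence is correct), but once you see that the indicator's Siegel transform is itself admissible for the genericity theorem, that layer can be removed.
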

\begin{proof} Since $\#\{E_{T,R}(C_1,C_2)\}=\#\{E_{1,R}(C_1,C_2)\}\cdot T$, we have
\begin{align*}
\#\{u_A\Gamma\cap E_{T,R}(C_1,C_2)\}&\sim\frac{1}{r}\sum_{n=0}^{T-1}\widehat{\chi}_{E_{r,R}(C_1,C_2)}(g_\ba^nu_A\Gamma)\\
&\sim\frac{T}{r}\lambda(E_{r,R}(C_1,C_2))\\
&=\lambda(E_{T,R}(C_1,C_2)).
\end{align*}
This completes the proof.
\end{proof}


\begin{thebibliography}{99}

\bibitem[AGP12]{agp12} J. Athreya, A. Ghosh and A. Prasad, \emph{Ultrametric logarithm laws II}, Monatsh Math 167 (2012), 333-356

\bibitem[APT16]{apt16} J. Athreya, A. Parrish and J. Tseng, \emph{Ergodic theory and Diophantine approximation for linear forms and translation surfaces}, [arXiv: 1401.4148v2]

\bibitem[DKL05]{dkl05} M. Dodson, S. Kristensen and J. Levesley, \emph{A quantitative Khintchine-Groshev type theorem over a field of formal series}, Indag. Math. (N.S) 16 (2005), 171-177

\bibitem[EMM98]{emm98} A. Eskin, G. Margulis and S. Mozes, \emph{Upper bounds and asymptotics in a quantitative version of the Oppenheim conjecture}, Annals of Mathematics 147 (1998), 93-141

\bibitem[EMMV15]{emmv15} M. Einsiedler, G. Margulis, A. Mohammadi and A. Venkatesh, \emph{Effective equidistribution and property ($\tau$)}, [arXiv: 1503.05884]

\bibitem[Gh07]{gh07} A. Ghosh, \emph{Metric Diophantine approximation over a local field of positive characteristic}, J. Number Theory 124 (2007), 454-469

\bibitem[KM12]{km12} D. Kleinbock and G. Margulis, \emph{On effective equidistribution of expanding translates of certain orbits in the space of lattices}, Number theory, analysis and geometry, 385-396, Springer, New York (2012)


\bibitem[KSW17]{ksw16} D. Kleinbock, R. Shi and B. Weiss, \emph{Pointwise equidistribution with an error rate and with respect to unbounded functions}, Math. Ann.  367  (2017), 857-879

\bibitem[KT07]{kt07} D. Kleinbock and G. Tomanov, \emph{Flows on $S$-arithmetic homogeneous spaces and applications to metric Diophantine approximation}, Coom. Math. Helv. 82 (2007), 519-581.

\bibitem[Mo96]{mo96} M. Morishita, \emph{A mean value theorem in adele geometry}, Algebraic number theory and Fermat's problem (Japanese) (1995), 1-11

\bibitem[Mo11]{mo11} A. Mohammadi, \emph{Measures invariant under horospherical subgroups
in positive characteristic}, J. Mod. Dynamics 5 (2011), 237-254

\bibitem[Oh02]{oh02} H. Oh, \emph{Uniform pointwise bounds for matrix coefficients of unitary representations and applications to Kazhdan constants}
Duke Math. J. 113 (2002), 133-192

\bibitem[Ros02]{rosen} M.~Rosen, \emph{Number theory in function fields}, Springer GTM \textbf{210} (2002)

\bibitem[Ru15]{ru15} R. R\"{u}hr, \emph{Some applications of effective unipotent dynamics}, Ph.D. Thesis, ETH Zurich, (2015)

\bibitem[Sha96]{sh96} N. Shah, \emph{Limit distributions of expanding translates of certain orbits on homogeneous spaces}, Proc. Indian Acad. Sci. (Math. Sci.), Vol. 106 No. 2 (1996), 105-125

\bibitem[Shi15]{shi15} R. Shi, \emph{Expanding cone and applications to homogeneous dynamics}, [arXiv:1510.05256]

\bibitem[Si45]{si45} C. Siegel, \emph{A mean value theorem in geometry of numbers}, Annals of Mathematics 46 (1945), 340-347

\bibitem[Sp79]{sp79} V. Sprindzuk, \emph{Metric Theory of Diophantine Approximations}, V.H. Winston \& Sons, Washington, DC, (1979)

\bibitem[To00]{to00} G. Tomanov, \emph{Orbits on homogeneous spaces of arithmetic origin and approximations}, Adv. studies in Pure Math. 26 (2000), 265-297




\end{thebibliography}
\end{document}